\newcommand{\arXiv}[1]{\href{http://arxiv.org/abs/#1}{arXiv:#1}}
\def\bibaut#1{{\sc #1}}
\providecommand{\U}[1]{\protect\rule{.1in}{.1in}}
\providecommand{\U}[1]{\protect\rule{.1in}{.1in}}
\providecommand{\U}[1]{\protect\rule{.1in}{.1in}}
\providecommand{\U}[1]{\protect\rule{.1in}{.1in}}
\providecommand{\U}[1]{\protect\rule{.1in}{.1in}}
\newcommand{\C}{\mathbb C}
\newcommand{\G}{\mathbb G}
\renewcommand{\P}{\mathbb P}
\DeclareMathOperator{\Sec}{Sec}
\DeclareMathOperator{\Stab}{Stab}
\newcommand{\QED}{\ifhmode\unskip\nobreak\fi\quad {\rm Q.E.D.}} 
\newcommand{\f}{\varphi}
\newcommand{\N}{\mathbb{N}}
\renewcommand{\P}{\mathbb{P}}
\renewcommand{\sec}{\mathbb{S}ec}
\DeclareMathOperator{\expdim}{expdim}
\DeclareMathOperator{\sign}{sign}
\newtheorem{Theorem}{Theorem}[section]
\newtheorem*{theoremn}{Theorem}
\newtheorem*{Conjecture}{Conjecture}
\newtheorem{Lemma}[Theorem]{Lemma}
\newtheorem{Proposition}[Theorem]{Proposition}
\newtheorem{Corollary}[Theorem]{Corollary}
\newtheorem{Assumption}[Theorem]{Assumption}
\theoremstyle{definition}
\newtheorem{Definition}[Theorem]{Definition}
\newtheorem{Example}[Theorem]{Example}
\begin{document}

\title{Non-Secant Defectivity via Osculating Projections}

\author[Alex Massarenti]{Alex Massarenti}
\address{\sc Alex Massarenti\\
Universidade Federal Fluminense (UFF)\\
Rua M\'ario Santos Braga\\
24020-140, Niter\'oi, Rio de Janeiro\\ 
Brazil}
\email{alexmassarenti@id.uff.br}

\author[Rick Rischter]{Rick Rischter}
\address{\sc Rick Rischter\\
Universidade Federal de Itajub\'a (UNIFEI)\\ 
Av. BPS 1303, Bairro Pinheirinho\\ 
37500-903, Itajub\'a, Minas Gerais\\ 
Brazil}
\email{rischter@unifei.edu.br}

\subjclass[2010]{Primary 14N05, 14N15, 14M15; Secondary 14E05, 15A69, 15A75}
\keywords{Grassmannians, secant varieties, osculating spaces, secant defect, degenerations of rational maps}
\date{\today}

\maketitle

\begin{abstract}
We introduce a method to produce bounds for the non secant defectivity of an arbitrary irreducible projective variety, once we know how its osculating spaces behave in families and when the linear projections from them are generically finite.\\
Then we analyze the relative dimension of osculating projections of Grassmannians, and as an application of our techniques we prove that asymptotically the Grassmannian $\G(r,n)$, parametrizing $r$-planes in $\mathbb{P}^n$, is not $h$-defective for $h\leq (\frac{n+1}{r+1})^{\lfloor\log_2(r)\rfloor}$. This bound improves the previous one $h\leq \frac{n-r}{3}+1$, due to H. Abo, G. Ottaviani and C. Peterson, for any $r\geq 4$. 
\end{abstract}
		
\setcounter{tocdepth}{1}
\tableofcontents

\section*{Introduction}
The \textit{$h$-secant variety} $\sec_{h}(X)$, of an irreducible, non-degenerate $n$-dimensional variety $X\subset\mathbb{P}^N$, is the Zariski closure of the union of the linear spaces spanned by collections of $h$ points on $X$. Secant varieties are central objects in both classical algebraic geometry \cite{CC01}, \cite{Za}, and applied mathematics \cite{La12}, \cite{LM}, \cite{LO}, \cite{MR}.

The \textit{expected dimension} of $\sec_{h}(X)$ is
$$\expdim(\sec_{h}(X)):= \min\{nh+h-1,N\}.$$
However, the actual dimension of $\sec_{h}(X)$ might be smaller than the expected one. Indeed, this happens when trough a general point of $\mathbb{P}^N$ there are infinitely many $(h-1)$-planes $h$-secant to $X$. According to a definition by F. Zak \cite{Za} we will say that $X$ is \textit{$h$-defective} if 
$$\dim(\sec_{h}(X)) < \expdim(\sec_{h}(X)).$$

In this paper we introduce a method to produce bounds for the non secant defectivity of an arbitrary irreducible projective variety, based on the behavior of its osculating spaces and of the corresponding osculating projections. Then, with these techniques, we study the dimension of secant varieties of the Grassmannian $\G(r,n)$ parametrizing $r$-planes in $\mathbb{P}^n$. Grassmannians together with Veronese and Segre varieties form the triad of varieties parametrizing rank one tensors. Hence, a general point of their $h$-secant variety corresponds to a tensor of a given rank depending on $h$. For this reason, secant varieties of Grassmannians, Veroneses and Segres are particularly interesting in problems of tensor decomposition \cite{CM}, \cite{CGLM}, \cite{La12}, \cite{Me06}, \cite{Me09}, \cite{GM16}.

Furthermore, secant varieties have been widely used to construct and study moduli spaces for all  possible additive decompositions of a general tensor into a given number of rank one tensors \cite{Do04}, \cite{DK93}, \cite{Ma16}, \cite{MM13}, \cite{RS00}, \cite{TZ11}, \cite{BGI11}.

The problem of determining the actual dimension of secant varieties, and its relation with the dimension of certain linear systems of hypersurfaces with double points, have a very long history in algebraic geometry, and can be traced back to the Italian school \cite{Ca37}, \cite{Sc08}, \cite{Se01}, \cite{Te11}.

Since then the geometry of secant varieties has been studied and used by many authors in various contexts \cite{CC01}, \cite{CR06}, \cite{IR08}, \cite{Ru08}, and the problem of secant defectivity has been widely studied for Veroneses, Segres and Grassmannians \cite{AH95}, \cite{AB13}, \cite{AOP09a}, \cite{AOP09b}, \cite{Bo13}, \cite{CGG03},\cite{CGG05}, \cite{CGG11}, \cite{LP13}, \cite{BBC12}, \cite{BCC11}.

Despite the long history of this subject, only in $1995$ J. Alexander and A. Hirshowitz \cite{AH95} classified secant defective Veronese varieties. Indeed, they proved that, except for the double Veronese embedding which is almost always defective, the degree $d$ Veronese embedding of $\mathbb{P}^n$ is not $h$-defective, with the following exceptions: 
$$(d,n,h)\in\{(4,2,5),(4,3,9),(3,4,7),(4,4,14)\}.$$
Later on, K. Baur, J. Draisma, W. A. de Graaf proposed a conjecture on secant defectivity of Grassmannians in the spirit of Alexander-Hirshowitz result \cite{BDdG07}. 

It is well-known that the secant variety $\sec_h(\G(1,n))$, that is the locus of skew-symmetric matrices of rank at most $2h$, is almost always defective. Therefore, throughout the paper we assume $r\geq 2$. Only four defective cases are known
then, and we have the following conjecture.
\begin{Conjecture}\cite[Conjecture 4.1]{BDdG07}
If $r\geq 2$ then $\G(r,n)$ is not $h$-defective with the following exceptions:
$$(r,n,h)\in\{(2,7,3),(3,8,3),(3,8,4),(2,9,4)\}.$$
\end{Conjecture}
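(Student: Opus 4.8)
The plan is to reduce the statement, via Terracini's Lemma, to the assertion that for $h$ general points $p_1,\dots,p_h\in\G(r,n)$ the tangent spaces $T_{p_1}\G(r,n),\dots,T_{p_h}\G(r,n)$ span a linear space of the expected dimension $\min\{h(r+1)(n-r)+h-1,\,N\}$, where $N=\binom{n+1}{r+1}-1$. Since $\G(r,n)$ is not $h$-defective as soon as it is not $(h-1)$-defective and the span grows by the full $(r+1)(n-r)+1$ upon adding a general point, the real content is to control this growth right up to the critical value $h_0=\big\lceil (N+1)/((r+1)(n-r)+1)\big\rceil$ at which $\sec_h(\G(r,n))$ is expected to fill $\P^N$. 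I would organize everything as a double induction on $(r,n)$, the base of which is the classical description of $\sec_h(\G(2,n))$ and $\sec_h(\G(3,n))$ together with the finitely many small cases checked directly by computing the rank of the Jacobian of the Pl\"ucker parametrization at random points over a large finite field.

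The inductive engine is the osculating projection method developed in the earlier sections: projecting $\G(r,n)$ from a general osculating space of order $s$ is generically finite and its image is governed by a smaller Grassmannian, so a single such projection absorbs a whole batch of general tangent directions at once. Iterating this peeling is what produces the doubling responsible for the exponent $\lfloor\log_2 r\rfloor$ in the asymptotic bound. To upgrade that bound to the exact threshold of the conjecture, I would sharpen the projection analysis from generic finiteness to minimal fibre dimension, using the precise relative dimension of osculating projections computed in the middle of the paper, and arrange the induction so that the tangent spaces of the peeled points stay in general position all the way up to $h=h_0$. Concretely, for each $(r,n)$ one specializes the points to lie on a chain of sub-Grassmannians $\G(r-1,n-1)\subset\G(r,n)$ and $\G(r,n-1)\subset\G(r,n)$, applies a Horace-type exact sequence to split the tangent-space computation into a restricted part on the sub-Grassmannian and a residual part, and concludes by the inductive hypothesis on both pieces.

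The delicate cases, and the source of the four exceptions $(2,7,3)$, $(3,8,3)$, $(3,8,4)$, $(2,9,4)$, are the boundary cases where $h(r+1)(n-r)+h-1$ is close to $N$, so $\sec_h(\G(r,n))$ is expected to almost fill the Pl\"ucker space. Here the peeling is wasteful: the last batch of points must be placed so that their tangent spaces exactly complete the span, with none of the slack the osculating projection method tolerates. I would treat these by a differential degeneration (the M\'ethode d'Horace diff\'erentielle), collapsing the last few points onto a common sub-Grassmannian and analyzing the limiting linear system of Pl\"ucker hyperplanes singular along the degenerate configuration; the four exceptions are exactly the configurations where this limiting system is forced to be larger than expected, and one must verify both that they are genuinely defective and that no further such coincidence occurs.

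The main obstacle is precisely this boundary analysis. The osculating projection method, as in the asymptotic theorem, loses a multiplicative factor and therefore cannot by itself reach $h_0$; closing this gap requires controlling the \emph{exact} dimension of the span of $h$ general tangent spaces when $h(r+1)(n-r)\approx\binom{n+1}{r+1}$, a regime in which subtle cancellations among the Pl\"ucker coordinates appear and are not captured by a purely dimension-counting peeling argument. Turning the generically finite osculating projections into an exact, boundary-tight induction, and ruling out defectivity beyond the four known coincidences, is the step that remains out of reach of the present techniques: a genuinely new idea, or an extensive and carefully stratified degeneration analysis, appears to be needed to settle the conjecture in full. The contribution one can make with the tools assembled here is therefore the asymptotic non-defectivity bound, with the full conjecture left as the target of this program.
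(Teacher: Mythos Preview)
The statement you were asked to prove is a \emph{conjecture} (due to Baur--Draisma--de Graaf) which the paper does not prove; it is quoted in the introduction as motivation, and the paper's actual contribution is the asymptotic non-defectivity bound of Theorem~\ref{maingrass}, which falls well short of the full conjecture. So there is no ``paper's own proof'' to compare your proposal against.

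You recognize this yourself in your final paragraph, where you correctly concede that the osculating-projection machinery loses a multiplicative factor and cannot reach the critical $h_0$, and that closing the gap to the conjecture would require genuinely new ideas. That diagnosis is accurate and matches the paper's own assessment of what its methods deliver. The earlier paragraphs of your proposal, however, read as though you intend to actually prove the conjecture by a Horace-type double induction on $(r,n)$ via sub-Grassmannians $\G(r-1,n-1)$ and $\G(r,n-1)$: this is not what the paper does, and more importantly no such induction is known to work---the restriction/residual splitting does not align cleanly with the Pl\"ucker embedding, and the ``differential Horace'' step you invoke for the boundary cases is precisely where all known approaches stall. So the honest summary is that your proposal is not a proof but a program sketch, and its conclusion (that only the asymptotic bound is within reach) is the correct one; the middle portion should not be presented as if the inductive engine were available, because it is not.
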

In \cite{CGG05} M. V. Catalisano, A. V. Geramita, and A. Gimigliano gave explicit bounds on $(r,n,h)$ for $\G(r,n)$ not to be $h$-defective. Later, in \cite{AOP09b} H. Abo, G. Ottaviani, and C. Peterson, improved these bounds, and showed that the conjecture is true for $h\leq 6$. Finally, in \cite{Bo13} A. Boralevi further improved this result by proving the conjecture for $h\leq 12$.

To the best of our knowledge, the best asymptotic bound for $\sec_h(\G(r,n))$ to have expected dimension was obtained by H. Abo, G. Ottaviani, and C. Peterson using monomial techniques.
\begin{theoremn}\cite[Theorem 3.3]{AOP09b}
If $r\geq 2$ and 
$$h\leq \frac{n-r}{3}+1$$
then $\sec_h(\G(r,n))$ has the expected dimension.
\end{theoremn}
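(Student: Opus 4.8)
The plan is to combine Terracini's lemma with an inductive specialization argument of Horace type, where the induction decreases $n$ by $3$ and $h$ by $1$, matching the shape of the bound.

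First, by Terracini's lemma the statement is equivalent to showing that for general $p_1,\dots,p_h\in\G(r,n)$ the affine tangent spaces $\widehat{T}_{p_1},\dots,\widehat{T}_{p_h}\subset\bigwedge^{r+1}\C^{n+1}$ are in direct sum: each has dimension $(r+1)(n-r)+1$, and one checks that $h\bigl[(r+1)(n-r)+1\bigr]\le\binom{n+1}{r+1}$ in the stated range, so that $\expdim(\sec_h(\G(r,n)))=h\bigl[(r+1)(n-r)+1\bigr]-1$. The main tool is the concrete description of the tangent space: if $p=[V]$ with $V\subset\C^{n+1}$ of dimension $r+1$, then $\widehat{T}_{[V]}$ is the image of the wedge map $\C^{n+1}\otimes\bigwedge^{r}V\to\bigwedge^{r+1}\C^{n+1}$, $u\otimes\omega\mapsto u\wedge\omega$.

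Next, induct on $n$. The base case combines $\sec_1(\G(r,n))=\G(r,n)$ with the known non-defectivity results for small $h$ and a direct verification for the finitely many pairs $(r,n)$ with $n-r$ small. For the inductive step one assumes the result for $\G(r,n-3)$ and $h-1$, fixes a decomposition $\C^{n+1}=W\oplus U$ with $\dim W=n-2$ and $\dim U=3$, and uses the sub-Grassmannian $\G(r,n-3)\hookrightarrow\G(r,n)$ of $r$-planes in $\P(W)$, whose Plücker span is $\bigwedge^{r+1}W$. Specialize $p_1,\dots,p_{h-1}$ to general points $[V_i]$ of this sub-Grassmannian and keep $p_h$ general in $\G(r,n)$; by upper semicontinuity it is enough to establish the direct-sum statement for this configuration. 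For $i\le h-1$ one splits $\widehat{T}_{[V_i]}$ into its component in $\bigwedge^{r+1}W$, which is the affine tangent space to the sub-Grassmannian at $[V_i]$, and its component in $U\otimes\bigwedge^{r}W$, which is $U\otimes\bigwedge^{r}V_i$. The first components span the expected dimension by the inductive hypothesis; the second components are independent in $U\otimes\bigwedge^{r}W$ as soon as the $\bigwedge^{r}V_i$ are independent in $\bigwedge^{r}W$, which holds for general $V_i$ whenever $(h-1)(r+1)\le\binom{n-2}{r}$. Hence $\langle\widehat{T}_{p_1},\dots,\widehat{T}_{p_{h-1}}\rangle$ has exactly the expected dimension and lies inside the fixed subspace $\Lambda:=\bigwedge^{r+1}W\oplus\bigl(U\otimes\bigwedge^{r}W\bigr)$, whose codimension in $\bigwedge^{r+1}\C^{n+1}$ equals, by Pascal's rule, $2\binom{n-2}{r-1}+\binom{n-1}{r-1}$.

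It then remains to add $\widehat{T}_{p_h}$, that is, to prove that the general tangent space $\widehat{T}_{p_h}$ meets the above span only in $0$. Here $\dim\widehat{T}_{p_h}=(r+1)(n-r)+1\le 2\binom{n-2}{r-1}+\binom{n-1}{r-1}=\codim\Lambda$ for $r\ge2$ --- this inequality, which would fail if one tried to remove fewer than $3$ dimensions from $n$, is precisely where the factor $\tfrac13$ in the bound enters --- so the dimension count is unobstructed; what must still be shown is that $\widehat{T}_{p_h}$ is genuinely transverse to $\langle\widehat{T}_{p_1},\dots,\widehat{T}_{p_{h-1}}\rangle$, and this is the real obstacle, since a priori the union of the tangent spaces to $\G(r,n)$ could be "stuck" relative to $\Lambda$. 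I expect this to be handled by pushing the specialization all the way to coordinate subspaces: replace the $V_i$ and $p_h$ by spans of standard basis vectors, so that all the tangent spaces become spanned by coordinate wedges $e_{i_0}\wedge\cdots\wedge e_{i_r}$, and the whole direct-sum assertion reduces to a combinatorial statement about pairwise disjointness of the corresponding families of multi-indices --- this is the content of the "monomial techniques" of \cite{AOP09b}. The remaining work is then the careful but essentially routine combinatorics of exhibiting one admissible such configuration; the conceptual core is the Terracini reduction together with the $n\mapsto n-3$, $h\mapsto h-1$ descent.
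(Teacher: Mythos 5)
A preliminary remark: the paper does not prove this statement. It is quoted from \cite{AOP09b} (their Theorem 3.3) only as a benchmark for Theorem \ref{maingrass}, so there is no internal proof to compare yours with; the comparison below is with the argument of Abo--Ottaviani--Peterson.

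Your Terracini reduction and the dimension bookkeeping are sound, but the proposal stops short precisely where the theorem lives. The transversality of $\widehat{T}_{p_h}$ with $\langle\widehat{T}_{p_1},\dots,\widehat{T}_{p_{h-1}}\rangle$ is the entire content of the statement, and you defer it to ``monomial techniques'' and ``routine combinatorics'' without carrying it out; a second unproved step is the independence of the subspaces $\bigwedge^{r}V_i\subset\bigwedge^{r}W$, which are very special $(r+1)$-dimensional subspaces (cones over linearly embedded $\P^r$'s inside $\P(\bigwedge^r W)$) and whose being in direct sum is not guaranteed by a dimension count alone. Moreover, once one ``pushes the specialization all the way to coordinate subspaces'', the Horace scaffolding $\C^{n+1}=W\oplus U$ becomes unnecessary: the proof in \cite{AOP09b} is a one-step coordinate specialization. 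Choose $(r+1)$-element subsets $I_1,\dots,I_h\subset\{0,\dots,n\}$ sharing a common core of $r-2$ indices, each carrying three private indices, so that $d(I_i,I_j)=3$ for $i\neq j$; this uses $r-2+3h\leq n+1$ indices, which is exactly the bound $h\leq\frac{n-r}{3}+1$. By Proposition \ref{oscgrass} with $s=1$, the affine tangent space at $e_{I_j}$ is the coordinate subspace spanned by the $e_J$ with $d(I_j,J)\leq 1$; the triangle inequality for $d$ shows these index sets are pairwise disjoint, so the $h$ tangent spaces are in direct sum, and semicontinuity plus Terracini concludes. This also explains the constant $3$: it is the minimum distance making the radius-one balls around the $I_j$ disjoint. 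Finally, if you do want to keep your inductive structure, the missing transversality can be obtained without combinatorics by further specializing $p_h=[V]$ with $V\supset U$: every element of $\bigwedge^{r}V$ then involves at least two vectors of $U$, so $\widehat{T}_{p_h}$ lies in $\bigoplus_{k\geq 2}\bigwedge^{r+1-k}W\otimes\bigwedge^{k}U$ and meets your subspace $\Lambda=\bigwedge^{r+1}W\oplus\bigl(U\otimes\bigwedge^{r}W\bigr)$ only in $0$, as required.
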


Our starting point, in order to study the dimension of $\sec_h(\G(r,n))$, is a result due to L. Chiantini and C. Ciliberto \cite[Proposition 3.5]{CC01} relating secant defectivity with the dimension of the general fiber of a general tangential projection. Given $x_1,\dots,x_h\in X\subset\mathbb{P}^N$ general points, we may consider a \textit{general $h$-tangential projection} of $X$
$$\tau_{X,h}:X\subset\mathbb{P}^N\dasharrow\mathbb{P}^{N_h}$$
that is the linear projection with center $\left\langle T_{x_1}X,\dots,T_{x_h}X\right\rangle$. Then, by \cite[Proposition 3.5]{CC01} if $\tau_{X,h}$ is generically finite then $X$ is not $(h+1)$-defective.

Our approach consists in considering linear projections from higher order osculating spaces. If $p\in X\subset\mathbb{P}^N$ is a smooth point, the \textit{$m$-osculating space} $T_p^{m}X$ of $X$ at $p$ is essentially the linear subspace of $\mathbb{P}^N$ generated by the partial derivatives of order less or equal than $m$ of a local parametrization of $X$ at $p$, see Definition \ref{oscdef}.

Given $p_1,\dots, p_l\in X$ general points, we denote by 
$$\Pi_{T^{k_1,\dots,k_l}_{p_1,\dots,p_l}}:X\subset\mathbb{P}^N\dasharrow\mathbb{P}^{N_{k_1,\dots,k_l}}$$
the corresponding \textit{$(k_1+\dots +k_l)$-osculating projection}, that is the linear projection with center $\left\langle T_{p_1}^{k_1}X,\dots, T_{p_l}^{k_l}X\right\rangle$.

When $X = \G(r,n)$ we manage to control the dimension of the general fiber of osculating projections from the span of a certain number of general osculating spaces. Indeed, in Corollary \ref{oscprojbirationalII} we prove that, under suitable numerical hypothesis, such an osculating projection is birational.

Then, in Section \ref{degtanosc} we construct flat degenerations of general tangential projections to linear projections which factor through suitable osculating projections. Since, by Proposition \ref{p2}, the dimension of the general fiber can only increase under specialization, the birationality of a certain osculating projection yields that the general tangential projection degenerating to it is generically finite. 

However, we are not able to check if the degree of the map is preserved, that is if the general tangential projection is birational as well. In order to do this, one needs to achieve a good control on the indeterminacy locus of the relevant tangential projection.    

On the other hand, by \cite[Proposition 3.5]{CC01} knowing that a general tangential projection is generically finite is enough to conclude that, under the numerical hypothesis ensuring the birationality of the corresponding general osculating projection, $\G(r,n)$ is not defective. As a direct consequence of our main results in Theorem \ref{maingrass} we get the following.

\begin{theoremn}
Assume that $r\geq 2$, set 
$$\alpha:=\left\lfloor \dfrac{n+1}{r+1} \right\rfloor$$
and write $r = 2^{\lambda_1}+\dots+2^{\lambda_s}+\varepsilon$, with $\lambda_1 > \lambda_2 >\dots >\lambda_s\geq 1$, $\varepsilon\in\{0,1\}$. If either
\begin{itemize}
\item[-] $h\leq (\alpha-1)(\alpha^{\lambda_1-1}+\dots+\alpha^{\lambda_s-1})+1$ or
\item[-] $n\geq r^2+3r+1$ and $h\leq \alpha^{\lambda_1}+\dots+\alpha^{\lambda_s}+1$
\end{itemize}
then $\G(r,n)$ is not $h$-defective.
\end{theoremn}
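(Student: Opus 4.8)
The plan is to derive this corollary directly from the main results quoted as Theorem \ref{maingrass}, feeding the osculating-projection analysis into the degeneration machinery of Section \ref{degtanosc} and then into the Chiantini--Ciliberto criterion \cite[Proposition 3.5]{CC01}. Concretely, I would first record that, by Corollary \ref{oscprojbirationalII}, for $\G(r,n)$ there is an explicit numerical bound on a tuple $(k_1,\dots,k_l)$ guaranteeing that the $(k_1+\dots+k_l)$-osculating projection $\Pi_{T^{k_1,\dots,k_l}_{p_1,\dots,p_l}}$ from $l$ general points is birational. The combinatorial heart of the argument is then to choose, for a given $h$, a collection of $h$ (or $h-1$) general points and, at each of them, an osculating order $k_i\le$ some fixed value, so that the span of the tangent spaces $\langle T_{x_1}\G(r,n),\dots,T_{x_h}\G(r,n)\rangle$ degenerates — via the flat family constructed in Section \ref{degtanosc} — to a linear space contained in the span of the chosen osculating spaces $\langle T^{k_1}_{p_1}\G(r,n),\dots,T^{k_l}_{p_l}\G(r,n)\rangle$. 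By Proposition \ref{p2} the dimension of the general fiber is upper semicontinuous under this specialization, so birationality of the osculating projection forces the general $h$-tangential projection $\tau_{\G(r,n),h}$ to be generically finite, and \cite[Proposition 3.5]{CC01} then yields that $\G(r,n)$ is not $(h+1)$-defective; equivalently it is not $h$-defective for $h$ one more than the number of tangent spaces we can absorb.

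The second paragraph of the argument is the bookkeeping that converts the osculating bound into the two displayed inequalities. Writing $\alpha=\lfloor (n+1)/(r+1)\rfloor$, the point is that a single general osculating space $T^k_p\G(r,n)$ can "account for" roughly $\alpha^k$ tangent spaces in the degeneration (this is exactly what the relative-dimension computation for osculating projections of Grassmannians in Theorem \ref{maingrass} is set up to give), with the precise count depending on whether one uses the crude bound valid for all $n\ge(r+1)\alpha$ or the sharper one available once $n\ge r^2+3r+1$, which is the regime in which enough independent osculating directions exist. Decomposing $r=2^{\lambda_1}+\dots+2^{\lambda_s}+\varepsilon$ in binary, one builds the degenerating configuration by taking, for each dyadic piece $2^{\lambda_j}$, a general point at which an osculating space of order $\lambda_j$ (or $\lambda_j-1$, in the first case) is used; summing the contributions $\alpha^{\lambda_j}$ (respectively $(\alpha-1)\alpha^{\lambda_j-1}$) over $j$, and adding $1$ for the Chiantini--Ciliberto shift, produces exactly $h\le \alpha^{\lambda_1}+\dots+\alpha^{\lambda_s}+1$ in the second case and $h\le(\alpha-1)(\alpha^{\lambda_1-1}+\dots+\alpha^{\lambda_s-1})+1$ in the first. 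The $\varepsilon\in\{0,1\}$ term contributes nothing, since $2^0=1$ and a single extra point with a $0$-th order (i.e. trivial) osculating space does not improve the count; one just checks it causes no obstruction.

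Throughout, I would lean on the hypothesis $r\ge 2$ to exclude the $\G(1,n)$ case (which is genuinely defective), and on the numerical hypotheses $n\ge(r+1)\alpha$, respectively $n\ge r^2+3r+1$, to ensure that the osculating spaces appearing in Corollary \ref{oscprojbirationalII} are in the range where birationality holds and where the degeneration of Section \ref{degtanosc} actually lands inside their span. The main obstacle, and the place where care is genuinely required, is the verification that the flat degeneration constructed in Section \ref{degtanosc} is compatible with the chosen binary decomposition: one must check that the $l=s$ osculating spaces of the prescribed orders can be realized simultaneously by a limit of $h-1$ genuinely general tangent spaces, i.e. that the combinatorial "budget" $\sum_j$ (number of tangent spaces absorbed by the $j$-th osculating space) is at least $h-1$, and that the limiting linear center has the expected dimension so that no unexpected drop or jump occurs. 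Granting the apparatus of Theorem \ref{maingrass} and Proposition \ref{p2}, this reduces to the inequality manipulations above, and the statement follows.
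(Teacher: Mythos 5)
Your overall strategy is the paper's: birational osculating projections (Corollary \ref{oscprojbirationalII}), flat degeneration of tangential projections into osculating ones (Section \ref{degtanosc}), semicontinuity of fiber dimension (Proposition \ref{p2}), and the Chiantini--Ciliberto criterion. The gap is in the quantitative bookkeeping, which is the actual content of this statement. You assert that a single osculating space $T^k_p$ can ``account for'' roughly $\alpha^k$ tangent spaces, and that one realizes the bound by taking one general point per dyadic piece $2^{\lambda_j}$ of $r$, carrying an osculating space of order $\lambda_j$ and contributing $\alpha^{\lambda_j}$. Neither claim is correct. The mechanism in the paper is that $\alpha$-osculating regularity lets you merge $\alpha$ osculating spaces of order $k$ into a single one of order $2k+1$; iterating, the number of tangent spaces absorbed by one osculating space of order $k$ is the function $h_\alpha(k)$ of Definition \ref{defhowmanytangent}, namely $\alpha^{\mu_1-1}+\dots+\alpha^{\mu_l-1}$ where $k+1=2^{\mu_1}+\dots+2^{\mu_l}+\varepsilon$ --- of order $\alpha^{\lfloor\log_2(k+1)\rfloor-1}$, not $\alpha^{k}$. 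To absorb $\alpha^{\lambda_j}$ tangent spaces at a \emph{single} point you would need osculating order $2^{\lambda_j+1}-1$, which for $r=2^{\lambda_1}+\dots$ exceeds the ceiling $r-1$ imposed by Proposition \ref{oscprojbirational} and Corollary \ref{oscprojbirationalII}; so the configuration you describe is not realizable. A point of order $\lambda_j$ actually contributes only $h_\alpha(\lambda_j)$, which is far less than $\alpha^{\lambda_j}$.

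The correct accounting, which your proposal is missing, is: take $\alpha$ points (if $n\geq r^2+3r+1$), or $\alpha-1$ points plus one of lower order otherwise, \emph{each} with osculating order $r-1$ (resp.\ $r-2$); the binary decomposition of $r=(r-1)+1$ is then applied \emph{at each point} to compute $h_\alpha(r-1)=\alpha^{\lambda_1-1}+\dots+\alpha^{\lambda_s-1}$, and multiplying by the number of points gives $\alpha\,h_\alpha(r-1)=\alpha^{\lambda_1}+\dots+\alpha^{\lambda_s}$, respectively $(\alpha-1)(\alpha^{\lambda_1-1}+\dots+\alpha^{\lambda_s-1})$ after discarding the contribution of the last point (using $h_\alpha(r-2)=h_\alpha(r-1)$ for $r$ odd). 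Your final inequalities agree with the statement, but they do not follow from the construction you describe; the missing ingredient is precisely the recursive doubling $k\mapsto 2k+1$ encoded in $h_\alpha$, which is what produces the exponent $\lfloor\log_2 r\rfloor$ instead of something linear or exponential in $r$. Granting Theorem \ref{maingrass} verbatim, the corollary does reduce to evaluating $h_\alpha(r-1)$ from the binary expansion of $r$, but that evaluation is exactly the step your write-up gets wrong.
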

Note that the bounds in our main result gives that asymptotically the Grassmannian $\G(r,n)$ is not $(\frac{n+1}{r+1})^{\lfloor\log_2(r)\rfloor}$-defective, while \cite[Theorem 3.3]{AOP09b} yields that $\G(r,n)$ is not $\frac{n}{3}$-defective. In Section \ref{uglymath} we show that Theorem \ref{maingrass} improves \cite[Theorem 3.3]{AOP09b} for any $r\geq 4$. However, H. Abo, G. Ottaviani, and C. Peterson in \cite{AOP09b} gave a much better bound, going with $n^2$, in the case $r=2$. 

We would like to mention that, as remarked by C. Ciliberto and F. Russo in \cite{CR06}, the idea that the behavior of osculating projections reflects the geometry of the variety itself was already present in the work of G. Castelnuovo \cite[Pages 186-188]{Ca37}.

Finally, we would like to stress that the machinery introduced in this paper could be used to produce bounds, for the non secant defectivity of an arbitrary irreducible projective variety, once we know how its osculating spaces behave in families and when the projections from them are generically finite. Indeed, in a forthcoming paper we will apply these techniques to Segre-Veronese varieties.

The paper is organized as follows. In Section \ref{secant} we recall some notions on secant varieties and tangential projections. In Section \ref{osculating} we compute explicitly the osculating spaces of Grassmannians, and in Section \ref{projosc} we study the relative dimension of general osculating projections. In Section \ref{degtanosc}, in order to extend our results on osculating projections to tangential projections, we investigate how rational maps degenerate in a $1$-dimensional family. Finally, in Section \ref{grassnodef} we take advantage of these techniques to prove our main result on the dimension of secant varieties of Grassmannians.

\subsection*{Acknowledgments}
We would like to thank Ciro Ciliberto for his useful comments, particularly about Section \ref{lls}. We would also like to thank Ada Boralevi, Luca Chiantini and Giorgio Ottaviani for helpful discussions. Finally, we thank Carolina Araujo for carefully reading and helping us improving a preliminary version of the paper.\\
The first named author is a member of the Gruppo Nazionale per le Strutture Algebriche, Geometriche e le loro Applicazioni of the Istituto Nazionale di Alta Matematica "F. Severi" (GNSAGA-INDAM).
The second named author would like to thanks CNPq for the financial support.

\section{Secant Varieties}\label{secant}
Throughout the paper we work over the field of complex numbers. In this section we recall the notions of secant varieties, secant defectivity and secant defect. We refer to \cite{Ru03} for a nice and comprehensive survey on the subject.

Let $X\subset\P^N$ be an irreducible non-degenerate variety of dimension $n$ and let
$$\Gamma_h(X)\subset X\times \dots \times X\times\G(h-1,N)$$
be the closure of the graph of the rational map
$$\alpha: X\times  \dots \times X \dasharrow \G(h-1,N),$$
taking $h$ general points to their linear span $\langle x_1, \dots , x_{h}\rangle$. Observe that $\Gamma_h(X)$ is irreducible and reduced of dimension $hn$. Let $\pi_2:\Gamma_h(X)\to\G(h-1,N)$ be the natural projection. We denote
$$\mathcal{S}_h(X):=\pi_2(\Gamma_h(X))\subset\G(h-1,N).$$
Again $\mathcal{S}_h(X)$ is irreducible and reduced of dimension $hn$. Finally, let
$$\mathcal{I}_h=\{(x,\Lambda) \: | \: x\in \Lambda\} \subset\P^N\times\G(h-1,N)$$
with natural projections $\pi_h$ and $\psi_h$ onto the factors. Furthermore, observe that $\psi_h:\mathcal{I}_h\to\G(h-1,N)$ is a $\P^{h-1}$-bundle on $\G(h-1,N)$.

\begin{Definition} Let $X\subset\P^N$ be an irreducible non-degenerate variety. The {\it abstract $h$-secant variety} is the irreducible variety
$$\Sec_{h}(X):=(\psi_h)^{-1}(\mathcal{S}_h(X))\subset \mathcal{I}_h.$$
The {\it $h$-secant variety} is
$$\sec_{h}(X):=\pi_h(Sec_{h}(X))\subset\P^N.$$
It immediately follows that $\Sec_{h}(X)$ is a $(hn+h-1)$-dimensional variety with a $\P^{h-1}$-bundle structure over $\mathcal{S}_h(X)$. We say that $X$ is \textit{$h$-defective} if $$\dim\sec_{h}(X)<\min\{\dim\Sec_{h}(X),N\}.$$
The number 
$$\delta_h(X) = \min\{\dim\Sec_{h}(X),N\}-\dim\sec_{h}(X)$$ 
is called the \textit{$h$-defect} of $X$. We say that $X$ is \textit{$h$-defective} if $\delta_{h}(X) > 0$.
\end{Definition}

Now, let $x_1,\dots,x_h\in X\subset\mathbb{P}^N$ be general points, and let $T_{x_i}X$ be the tangent space of $X$ at $x_i$. We will call the linear projection
$$\tau_{X,h}:X\subseteq\mathbb{P}^N\dasharrow\mathbb{P}^{N_h}$$
with center $\left\langle T_{x_1}X,\dots,T_{x_h}X\right\rangle$ a \textit{general $h$-tangential projection} of $X$. Finally, let $X_h = \tau_{X,h}(X)$. We will need the first part of the following result due to L. Chiantini and C. Ciliberto.

\begin{Proposition}\cite[Proposition 3.5]{CC01}\label{cc}
Let $X\subset\mathbb{P}^N$ be a irreducible, non-degenerate, projective variety of dimension $n$.
\begin{itemize}
\item[-] If $\dim(X_h) = \dim(X)$, that is $\tau_{X,h}:X\dasharrow X_h$ is generically finite, then $X$ is not $(h+1)$-defective.
\item[-] If $N-\dim(\left\langle T_{x_1}X,\dots,T_{x_h}X\right\rangle)-1\geq n$ and $\dim(X_h)<\dim(X)$, that is $\tau_{X,h}:X\dasharrow X_h$ has positive dimensional general fibers, then $X$ is $(h+1)$-defective.
\end{itemize}
\end{Proposition}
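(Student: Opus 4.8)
The plan is to derive both statements from Terracini's lemma, which is the one substantial input and really the crux of the proposition. Recall that for general points $x_1,\dots,x_h\in X$ and a general point $z\in\left\langle x_1,\dots,x_h\right\rangle$, Terracini's lemma identifies the tangent space $T_z\sec_h(X)$ with the linear span $\Lambda_h:=\left\langle T_{x_1}X,\dots,T_{x_h}X\right\rangle$; in particular $\dim\sec_h(X)=\dim\Lambda_h$. Once this is available, everything reduces to dimension bookkeeping.

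The key preliminary step is an identity relating the image dimension of $\tau_{X,h}$ to the jump between consecutive secant dimensions. Since $\tau_{X,h}$ is the restriction to $X$ of the linear projection with centre $\Lambda_h$, for a general point $x_{h+1}$ (so $x_{h+1}\notin\Lambda_h$, and by generic smoothness over $\mathbb{C}$ the image $X_h$ is smooth at $\tau_{X,h}(x_{h+1})$) the tangent space of the image is the projection of the tangent space, $T_{\tau_{X,h}(x_{h+1})}X_h=\pi_{\Lambda_h}(T_{x_{h+1}}X)$. Combining this with $\left\langle T_{x_1}X,\dots,T_{x_{h+1}}X\right\rangle=\left\langle\Lambda_h,T_{x_{h+1}}X\right\rangle$ and the elementary projection formula $\dim\left\langle\Lambda_h,T_{x_{h+1}}X\right\rangle=\dim\Lambda_h+1+\dim\pi_{\Lambda_h}(T_{x_{h+1}}X)$, Terracini's lemma applied at levels $h$ and $h+1$ yields
\begin{equation*}
\dim\sec_{h+1}(X)=\dim\sec_h(X)+1+\dim(X_h).\tag{$\star$}
\end{equation*}

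For the second bullet $(\star)$ gives the conclusion almost immediately. Using the trivial bound $\dim\sec_h(X)\le hn+h-1$ together with $\dim(X_h)\le n-1$ in $(\star)$ gives $\dim\sec_{h+1}(X)\le (h+1)n+h-1<(h+1)n+h$; on the other hand, the hypothesis $N-\dim\Lambda_h-1\ge n$, rewritten via Terracini as $\dim\sec_h(X)\le N-n-1$, combined with $\dim(X_h)\le n-1$ in $(\star)$, gives $\dim\sec_{h+1}(X)\le\dim\sec_h(X)+n\le N-1<N$. Hence $\dim\sec_{h+1}(X)<\min\{(h+1)n+h,N\}$ and $X$ is $(h+1)$-defective.

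For the first bullet I would first record the monotonicity $\dim(X_{h'})\ge\dim(X_{h''})$ for $h'\le h''$: enlarging the centre makes the projection factor, so $X_{h''}$ is a further linear projection of $X_{h'}$. Thus $\dim(X_h)=n$ forces $\dim(X_{h'})=n$ for all $0\le h'\le h$, the reverse inequality $\dim(X_{h'})\le n$ being automatic. Telescoping $(\star)$ from $\dim\sec_1(X)=n$ then gives $\dim\sec_{h+1}(X)=(h+1)n+h$, and in particular $\dim\sec_h(X)=hn+h-1$, so by Terracini $\dim\Lambda_h=hn+h-1$. Generic finiteness means the image has dimension $n$, which forces $N-\dim\Lambda_h-1\ge n$, i.e. $(h+1)n+h\le N$; therefore $\dim\sec_{h+1}(X)=(h+1)n+h=\min\{(h+1)n+h,N\}$ and $X$ is not $(h+1)$-defective. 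The entire argument rests on Terracini's lemma and the identity $(\star)$, which is where the geometric content lies; the only points requiring care are the generic-smoothness identification of the tangent space of the image and the well-definedness of $\dim(X_{h'})$ for general configurations, both standard in characteristic zero.
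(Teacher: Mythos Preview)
The paper does not give a proof of this proposition; it is quoted as \cite[Proposition 3.5]{CC01} and then only illustrated by the Veronese example. So there is no ``paper's proof'' to compare against. That said, your argument is correct and is essentially the standard one, based on Terracini's lemma, that one finds in \cite{CC01} and \cite{Ru03}.

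A couple of small remarks. In the first bullet, once the telescoping yields $\dim\sec_{h+1}(X)=(h+1)n+h$, the inequality $(h+1)n+h\le N$ is automatic (since $\sec_{h+1}(X)\subseteq\P^N$), so your separate derivation of $N-\dim\Lambda_h-1\ge n$ from generic finiteness is not needed, though it is of course true. Also, the identity $(\star)$ requires only that $\Lambda_{h'}\subsetneq\P^N$ at each step $h'\le h$, which indeed follows from $\Lambda_{h'}\subseteq\Lambda_h\subsetneq\P^N$; you might state this explicitly, since it is what guarantees every $\tau_{X,h'}$ is defined and the telescoping is valid. The generic-smoothness identification $T_{\tau_{X,h}(x_{h+1})}X_h=\pi_{\Lambda_h}(T_{x_{h+1}}X)$ is the one genuinely char-zero point, and you flag it appropriately.
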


For instance, let $\nu_2^n:\mathbb{P}^n\rightarrow\mathbb{P}^{N_n}$ be the $2$-Veronese embedding of $\mathbb{P}^n$, with $N_n = \frac{1}{2}(n+2)(n+1)-1$, $X = V^n_2\subset\mathbb{P}^{N_n}$ the corresponding Veronese variety, and $x_1,\dots, x_{h}\in V^n_2$ general points, with $h\leq n-1$. The linear system of hyperplanes in $\mathbb{P}^{N_n}$ containing $\left\langle T_{x_1}V^n_2,\dots, T_{x_{h}}V^n_2\right\rangle$ corresponds to the linear system of quadrics in $\mathbb{P}^n$ whose vertex contains $\Lambda=\left\langle\nu_2^{-1}(x_1),\dots, \nu_2^{-1}(x_{h})\right\rangle$. Therefore, we have the following commutative diagram
\[
  \begin{tikzpicture}[xscale=2.7,yscale=-1.2]
    \node (A0_0) at (0, 0) {$\mathbb{P}^n$};
    \node (A0_1) at (1, 0) {$V_2^n\subset\mathbb{P}^{N_n}$};
    \node (A1_0) at (0, 1) {$\mathbb{P}^{n-h}$};
    \node (A1_1) at (1, 1) {$V_2^{n-h}\subset\mathbb{P}^{N_{n-h}}$};
    \path (A0_0) edge [->]node [auto] {$\scriptstyle{\nu_2^n}$} (A0_1);
    \path (A0_0) edge [->,swap,dashed]node [auto] {$\scriptstyle{\pi_\Lambda}$} (A1_0);
    \path (A0_1) edge [->,dashed]node [auto] {$\scriptstyle{\tau_{X,h}}$} (A1_1);
    \path (A1_0) edge [->]node [auto] {$\scriptstyle{\nu_2^{n-h}}$} (A1_1);
  \end{tikzpicture}
  \]
where $\pi_{\Lambda}:\mathbb{P}^n\dasharrow\mathbb{P}^{n-h}$ is the projection form $\Lambda$. Hence $\tau_{X,h}$ has positive relative dimension, and Proposition \ref{cc} yields, as it is well-known, that $V_2^n$ is $h$-defective for any $h\leq n$.
 
\section{Osculating Spaces of Grassmannians}\label{osculating}
Let $X\subset \P^N$ be an integral projective variety of dimension $n$, $p\in X$ a smooth point, and 
$$
\begin{array}{cccc}
\phi: &\mathcal{U}\subseteq\mathbb{C}^n& \longrightarrow & \mathbb{C}^{N}\\
      & (t_1,\dots,t_n) & \longmapsto & \phi(t_1,\dots,t_n)
\end{array}
$$
with $\phi(0)=p$, be a local parametrization of $X$ in a neighborhood of $p\in X$. 

For any $m\geq 0$ let $O^m_pX$ be the affine subspace of $\mathbb{C}^{N}$ passing through $p\in X$, and whose direction is given by the subspace generated by the vectors $\phi_I(0)$, where $I = (i_1,\dots,i_n)$ is a multi-index such that $|I|\leq m$ and 
\begin{equation}\label{osceq}
\phi_I = \frac{\partial^{|I|}\phi}{\partial t_1^{i_1}\dots\partial t_n^{i_n}}.
\end{equation}

\begin{Definition}\label{oscdef}
The $m$-\textit{osculating space} $T_p^m X$ of $X$ at $p$ is the projective closure in $\mathbb{P}^N$ of the affine subspace $O^m_pX\subseteq \mathbb{C}^{N}$.
\end{Definition}

For instance, $T_p^0 X=\{p\}$, and $T_p^1 X$ is the usual tangent space of $X$ at $p$. When no confusion arises we will write $T_p^m$ instead of $T_p^mX$.

Osculating spaces can be defined intrinsically. Let $\mathcal{L}$ be an invertible sheaf on $X$, $V = \mathbb{P}(H^0(X,\mathcal{L}))$, and $\Delta\subset X\times X$ the diagonal. The rank $\binom{n+m}{m}$ locally free sheaf
$$J_m(\mathcal{L}) = \pi_{1*}(\pi_{2}^{*}(\mathcal{L})\otimes \mathcal{O}_{X\times X}/\mathcal{I}_{\Delta}^{m+1})$$
is called the \textit{$m$-jet bundle} of $\mathcal{L}$. Note that the fiber of $J_m(\mathcal{L})$ at $p\in X$ is 
$$J_m(\mathcal{L})_p\cong H^0(X,\mathcal{L}\otimes\mathcal{O}_X/\mathfrak{m}_p^{m+1})$$
and the quotient map 
$$j_{m,p}:V\rightarrow H^0(X,\mathcal{L}\otimes\mathcal{O}_X/\mathfrak{m}_p^{m+1})$$
is nothing but the evaluation of the global sections and their derivatives of order at most $m$ at the point $p\in X$. Let 
$$j_m:V\otimes\mathcal{O}_X\rightarrow J_m(\mathcal{L})$$
be the corresponding vector bundle map. Then, there exists an open subset $U_m\subseteq X$ where $j_m$ is of maximal rank $r_m\leq \binom{n+m}{m}$.

The linear space $\mathbb{P}(j_{m,p}(V)) = T_p^m X\subseteq\mathbb{P}(V)$ is the $m$-\textit{osculating space} of $X$ at $p\in X$. The integer $r_m$ is called the \textit{general $m$-osculating dimension} of $\mathcal{L}$ on $X$.

Note that while the dimension of the tangent space at a smooth point is always equal to the dimension of the variety, higher order osculating spaces can be strictly smaller than expected even at a general point. In general, we have
\begin{equation}\label{dimosc}
\dim(T_p^m X) = \min\left\{\binom{n+m}{n}-1-\delta_{m,p},N\right\}
\end{equation}
where $\delta_{m,p}$ is the number of independent differential equations of order less or equal than $m$ satisfied by $X$ at $p$.

Projective varieties having general $m$-osculating dimension smaller than expected were introduced and studied in \cite{Seg07}, \cite{Te12}, \cite{Bom19}, \cite{To29}, \cite{To46}, and more recently in \cite{PT90}, \cite{BPT92}, \cite{BF04}, \cite{MMRO13}, \cite{DiRJL15}.

In particular, these works highlight how algebraic surfaces with defective higher order osculating spaces contain many lines, such as rational normal scrolls, and developable surfaces, that is cones or tangent developables of curves. As an example, which will be useful later on in the paper, we consider tangent developables of rational normal curves.

\begin{Proposition}\label{tdrnc}
Let $C_n\subseteq\mathbb{P}^n$ be a rational normal curve of degree $n$ in $\mathbb{P}^n$, and let $Y_n\subseteq\mathbb{P}^n$ be its tangent developable. Then
$$\dim(T^m_pY_n) = \min\{m+1,n\}$$
for $p\in Y_n$ general, and $m\geq 1$.
\end{Proposition}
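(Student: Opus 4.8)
The plan is to work with an explicit parametrization. Write the rational normal curve as the image of $\gamma\colon\mathbb{C}\to\mathbb{C}^{n+1}$, $\gamma(t)=(1,t,t^2,\dots,t^n)$, so that the point of $Y_n$ lying on the tangent line to $C_n$ at $\gamma(t)$ with parameter $s$ is $\gamma(t)+s\,\gamma'(t)$; note that this vector has first coordinate $1$. Hence
$$\phi(t,s)=\gamma(t)+s\,\gamma'(t)$$
is a parametrization of $Y_n$ in the affine chart $\{x_0=1\}\subset\mathbb{P}^n$, and it is a genuine local parametrization near a general point because $\phi_t=\gamma'(t)+s\,\gamma''(t)$ and $\phi_s=\gamma'(t)$ are linearly independent once $s\neq0$; in particular $Y_n$ is a surface, consistent with $\min\{m+1,n\}=2$ for $m=1$, $n\geq 2$.

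First I would identify the relevant partial derivatives. Since $\phi$ is affine–linear in $s$, every derivative of order $\geq2$ in $s$ vanishes, so only the multi-indices $I=(a,0)$ and $I=(a,1)$ contribute: $\phi_{(a,0)}=\gamma^{(a)}(t)+s\,\gamma^{(a+1)}(t)$ and $\phi_{(a,1)}=\gamma^{(a+1)}(t)$. Thus the linear subspace $W_m\subseteq\mathbb{C}^{n+1}$ spanned by $p=\phi(t,s)$ together with all $\phi_I(t,s)$ with $|I|\leq m$ is generated by $\{\gamma^{(a)}(t)+s\,\gamma^{(a+1)}(t):0\leq a\leq m\}\cup\{\gamma^{(a)}(t):1\leq a\leq m\}$, and $\dim(T^m_pY_n)=\dim W_m-1$.

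Next I would simplify $W_m$. The vectors $\gamma'(t),\dots,\gamma^{(m)}(t)$ lie in $W_m$; the member $a=0$ of the first family then forces $\gamma(t)\in W_m$ (here $m\geq1$ is used), the members with $1\leq a\leq m-1$ are redundant, and the member $a=m$ forces $s\,\gamma^{(m+1)}(t)\in W_m$, hence $\gamma^{(m+1)}(t)\in W_m$ because $s\neq0$ at a general point of $Y_n$. Therefore $W_m=\langle\gamma(t),\gamma'(t),\dots,\gamma^{(m+1)}(t)\rangle$. Finally, for a rational normal curve the iterated derivatives $\gamma(t),\gamma'(t),\dots,\gamma^{(k)}(t)$ are linearly independent for each fixed $t$ whenever $k\leq n$ — writing them as the rows of a matrix and keeping the first $k+1$ columns gives an upper–triangular matrix with diagonal entries $0!,1!,\dots,k!$ — and they span $\mathbb{C}^{n+1}$ as soon as $k\geq n$. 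Hence $\dim W_m=\min\{m+2,n+1\}$, and $\dim(T^m_pY_n)=\min\{m+2,n+1\}-1=\min\{m+1,n\}$, as asserted.

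The argument is essentially a bookkeeping computation; the only two subtleties are that the affine–linearity of $\phi$ in $s$ kills almost all $s$-derivatives, and that generality of $p$ enters exactly once, namely to guarantee $s\neq0$ so that the top vector $s\,\gamma^{(m+1)}(t)$ genuinely lies in $W_m$. I do not anticipate any real obstacle.
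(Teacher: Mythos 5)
Your proof is correct and follows essentially the same route as the paper's: both parametrize $Y_n$ as $\gamma(t)+s\,\gamma'(t)$ in an affine chart, use affine--linearity in $s$ to kill all higher $s$-derivatives, and use the identity $\phi_{(a,0)}-\phi_{(a-1,1)}=s\,\phi_{(a,1)}$ (the paper's relation $\partial_t^m\phi-\partial_t^{m-1}\partial_u\phi=u\,\partial_t^m\partial_u\phi$) to see that each new order contributes exactly one new independent direction. If anything, your version is slightly more complete, since you verify explicitly that the surviving vectors $\gamma,\gamma',\dots,\gamma^{(m+1)}$ are linearly independent via the triangular matrix with diagonal $0!,1!,\dots$, a point the paper leaves implicit in its count of $\delta_{m,p}$.
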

\begin{proof}
We may work on an affine chart. Then $Y_n$ is the surface parametrized by
$$
\begin{array}{cccc}
\phi: & \mathbb{A}^2 & \longrightarrow & \mathbb{A}^n\\ 
 & (t,u) & \mapsto & (t+u,t^2+2tu,\dots ,t^n+nt^{n-1}u)
\end{array} 
$$
Note that 
$$\frac{\partial^m \phi}{\partial t^{m-k} \partial u^k}=0$$
for any $k\geq 2$. Furthermore, we have
$$\frac{\partial^m\phi}{\partial t^m}-\frac{\partial^m\phi}{\partial t^{m-1}\partial u} = u\frac{\partial^{m+1}\phi}{\partial t^m\partial u}$$
for any $m\geq 1$.

Therefore, for any $m\geq 1$ we get just two non-zero partial derivatives of order $m$, and one partial derivative is given in terms of smaller order partial derivatives. Furthermore, in the notation of (\ref{dimosc}) we have $\delta_{m,p} = \frac{m(m+1)}{2}-1$ for any $1\leq m\leq n-1$, where $p\in Y_n$ is a general point.
\end{proof}

From now on we will assume that $n \geq 2r+1.$ We will denote by $e_0,\dots,e_n\in \C^{n+1}$ both the vectors of the canonical basis of $\mathbb{C}^{n+1}$ and the corresponding points in $\P^n=\P(\C^{n+1})$.

Throughout the paper we will always view $\G(r,n)$ as a projective variety in its Pl\"ucker embedding, that is the morphism induced by the determinant of the universal quotient bundle $\mathcal{Q}_{\G(r,n)}$ on $\G(r,n)$:
$$
\begin{array}{cccc}
\f_{r,n}: &\G(r,n)& \longrightarrow & \P^N:=\P(\bigwedge^{r+1}\C^{n+1})\\
      & \left\langle v_0,\dots,v_r\right\rangle & \longmapsto & [v_0\wedge \dots\wedge v_r]
\end{array}
$$
where $N = \binom{n+1}{r+1}-1$. Now, let 
$$\Lambda:=\left\{ I\subset \{0,\dots,n\}, |I|=r+1 \right\}.$$
For each $I=\{i_0,\dots,i_r\}\in \Lambda$ let $e_I\in\G(r,n)$ be the point corresponding to $e_{i_0}\wedge\dots\wedge e_{i_r}\in\bigwedge^{r+1} \C^{n+1}$. 

Furthermore, we define a distance on $\Lambda$ as 
$$d(I,J)=|I|-|I\cap J|=|J|-|I\cap J|$$ 
for each $I,J\in\Lambda$. Note that, with respect to this distance, the diameter of $\Lambda$ is $r+1$.

In the following we give an explicit description of osculating spaces of Grassmannians at fundamental points.

\begin{Proposition}\label{oscgrass}
For any $s\geq 0$ we have 
$$T^s_{e_I}(\G(r,n))= \left\langle e_J \: | \: d(I,J)\leq s\right\rangle = \{p_J=0 \: | \: d(I,J)>s\}\subseteq\P^N.$$
In particular, $T^s_{e_I}(\G(r,n))=\P^N$ for any $s\geq r+1$.
\end{Proposition}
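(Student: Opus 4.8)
The plan is to compute a local parametrization of $\G(r,n)$ around the fundamental point $e_I$ and read off the partial derivatives directly. Fix $I=\{i_0,\dots,i_r\}\in\Lambda$; after permuting coordinates we may assume $I=\{0,1,\dots,r\}$. The standard affine chart of $\G(r,n)$ centered at $e_I$ is the set of $(r+1)$-planes spanned by the rows of a matrix $\bigl(\,\mathrm{Id}_{r+1}\mid A\,\bigr)$, where $A=(a_{kj})$ is an $(r+1)\times(n-r)$ matrix of free coordinates $t_{kj}=a_{kj}$, with $0\le k\le r$ and $r+1\le j\le n$; the point $e_I$ itself is $A=0$. Composing with the Pl\"ucker map $\f_{r,n}$, the coordinate $p_J$ of $\f_{r,n}(\mathrm{Id}\mid A)$, for $J\in\Lambda$, is the maximal minor of $(\mathrm{Id}\mid A)$ on the columns indexed by $J$. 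If $d(I,J)=\ell$, i.e.\ $J$ agrees with $I$ in $r+1-\ell$ positions and has $\ell$ new indices in $\{r+1,\dots,n\}$, then this minor is, up to sign, the $\ell\times\ell$ minor of $A$ on the rows $I\setminus J$ and columns $J\setminus I$; in particular it is a homogeneous polynomial of degree exactly $\ell$ in the $t_{kj}$, and for $\ell=0$ it is the constant $1$ (the coordinate $p_I$). Thus in this chart the local parametrization $\phi=(\,p_J\,)_{J\in\Lambda}$ has the feature that the $p_J$-component is a homogeneous polynomial of degree $d(I,J)$ in the local coordinates.

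The key step is then purely formal. Since each component $p_J\circ\phi$ is homogeneous of degree $d(I,J)$, all its partial derivatives $\phi_K$ with $|K|<d(I,J)$ vanish at the origin, while for $\ell:=d(I,J)$ the degree-$\ell$ piece is (up to sign) a nonzero determinant, so some partial derivative of order exactly $\ell$ is nonzero at the origin. Grouping the Pl\"ucker coordinates by their distance from $I$, one sees that for each $0\le \ell\le \min\{s,r+1\}$ the span of the vectors $\phi_K(0)$ with $|K|=\ell$ is exactly the coordinate subspace $\langle e_J : d(I,J)=\ell\rangle$: the determinants appearing are linearly independent because distinct $J$'s at distance $\ell$ correspond to distinct Pl\"ucker coordinates, and the derivative of $p_J\circ\phi$ of order $\ell$ never contributes to a coordinate $p_{J'}$ with $d(I,J')\neq\ell$ since that component is homogeneous of a different degree. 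Taking the span over all $|K|\le s$ and passing to the projective closure gives
$$T^s_{e_I}(\G(r,n))=\bigl\langle e_J : d(I,J)\le s\bigr\rangle=\{p_J=0 : d(I,J)>s\},$$
and since the diameter of $\Lambda$ is $r+1$, this is all of $\P^N$ once $s\ge r+1$.

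The main obstacle, and the only place requiring genuine care, is the linear-independence/spanning bookkeeping in the middle step: one must check that for a fixed order $\ell$ the collection of nonzero partial derivatives $\phi_K(0)$ with $|K|=\ell$ spans the \emph{full} coordinate subspace indexed by $\{J : d(I,J)=\ell\}$ and nothing more. Concretely, for a given $J$ at distance $\ell$, the degree-$\ell$ form $p_J\circ\phi$ equals (up to sign) the determinant of the submatrix of $A$ on rows $I\setminus J$, columns $J\setminus I$, which is a sum of $\ell!$ squarefree monomials; differentiating with respect to the multi-index $K$ matching one such monomial produces a nonzero constant in the $e_J$-slot and, because every other component $p_{J'}\circ\phi$ is homogeneous of degree $d(I,J')\ne \ell$ in the $t$'s or else (when $d(I,J')=\ell$, $J'\ne J$) involves a different set of variables, zero in every other relevant slot. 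Running $J$ over all indices at distance $\ell$ and $K$ over the corresponding monomials exhibits the coordinate vectors $e_J$ themselves as partial derivatives, which simultaneously gives the claimed lower bound on $T^s_{e_I}$ and, since no derivative of order $\le s$ can have a nonzero component along $e_J$ with $d(I,J)>s$, the matching upper bound. The equality with the linear span $\{p_J=0 : d(I,J)>s\}$ and the statement for $s\ge r+1$ are then immediate.
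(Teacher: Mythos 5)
Your proposal is correct and follows essentially the same route as the paper: both work in the standard affine chart $(\mathrm{Id}\mid A)$ centered at $e_I$, identify the Pl\"ucker coordinate $p_J$ with the $d(I,J)\times d(I,J)$ minor of $A$ on rows $I\setminus J$ and columns $J\setminus I$, and observe that the order-$\ell$ partial derivatives at the origin are exactly the coordinate vectors $e_J$ with $d(I,J)=\ell$. Your homogeneity framing is a slightly cleaner way to organize the same computation the paper carries out explicitly.
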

\begin{proof}
We may assume that $I\!=\!\{0,\dots,r\}$ and consider the usual parametrization of $\G(r,n):$ 
$$\phi:\C^{(r+1)(n-r)}\rightarrow \G(r,n)$$
given by 
$$
A=(a_{ij})=\begin{pmatrix}
1& \dots & 0 &a_{0,r+1}& \dots & a_{0n}\\
\vdots & \ddots & \vdots & \vdots &   \ddots &\vdots\\
0 & \dots & 1 & a_{r,r+1} & \dots & a_{rn}\\
\end{pmatrix}
\mapsto (\det (A_J))_{J\in \Lambda}
$$
where $A_J$ is the $(r+1)\times(r+1)$ matrix obtained from $A$ considering just the columns indexed by $J$.

Note that each variable appears in degree at most one in the coordinates of $\phi$. Therefore, deriving two times with respect to the same variable always gives zero.

Thus, in order to describe the osculating spaces we may take into account just partial derivatives
with respect to different variables. Moreover, since the degree of $\det (A_J)$ with respect to $a_{i,j}$ is at most $r+1$ all partial derivatives of order greater or equal than $r+2$ are zero. Hence, it is enough to prove the proposition for $s\leq r+1$.

Given $J=\{j_0,\dots,j_r\}\subset \{0,\dots,n\}$, $k\in \{0,\dots,r\}$, and $k'\in \{r+1,\dots,n\}$ we have
$$ \dfrac{\partial \det (A_J)}{\partial a_{k,k'}}=\begin{cases}
0 &\mbox{ if } k'\notin J\\
(-1)^{l+1+k'} \det (A_{J,k,k'}) &\mbox{ if } k'=j_l
\end{cases}$$
where $A_{J,k,k'}$ denotes the submatrix of $A_{J}$ obtained deleting the line indexed by $k$ and the column indexed by $k'$. More generally, for any $m\geq 1$ and for any
\begin{align*}
J&=\{j_0,\dots,j_r\}\subset \{0,\dots,n\},\\
K'&=\{k_1',\dots,k_m'\}\subset \{r+1,\dots,n\}, \\
K&=\{k_1,\dots,k_m\}\subset \{0,\dots,r\}
\end{align*}
we have
$$ \dfrac{\partial^m \det (A_J)}{\partial a_{k_1,k_1'}\dots\partial a_{k_m,k_m'}}=
\begin{cases}
(\pm 1) \det (A_{J,(k_1,k_1'),\dots,(k_m,k_m')} )
&\mbox{ if }K'\!\subset\! J \mbox{ and } |K|\!=\!|K'|\!=\!m\leq d\\
0 &\mbox{ otherwise }
\end{cases}$$
where $d=d(J,\{0,\dots,r\})=\deg(\det(A_J))$. Therefore
$$ \dfrac{\partial^m \det (A_J)}{\partial a_{k_1,k_1'}\dots\partial a_{k_m,k_m'}}(0)=\begin{cases}
\pm 1 &\mbox{ if } J=K'\bigcup \left(\{ 0,\dots, r\}\backslash K\right)\\
0 &\mbox{ otherwise }
\end{cases}$$
and
$$\dfrac{\partial^m \phi}{\partial a_{k_1,k_1'}\dots\partial a_{k_m,k_m'}}(0)=
\pm  e_{K'\cup \left(\{ 0,\dots, r\}\setminus K\right)}.$$
Note that $d\left(K'\cup \left(\{ 0,\dots, r\}\backslash K\right),\{ 0,\dots, r\}\right)=m$, and that any $J$ with $d(J,\{0,\dots,r\})=m$ may be written in the form $K'\cup \left(\{ 0,\dots, r\}\backslash K\right)$.

Finally, we get that
$$\left\langle \dfrac{\partial^{|I|}\phi}{\partial^I a_{i,j}}(0)\: \big| \: |I|= m\right\rangle
=\left\langle e_J\: | \: d(J,\{0,\dots r\})=m\right\rangle$$
which proves the statement.
\end{proof}

Now, it is easy to compute the dimension of the osculating spaces of $\G(r,n)$.

\begin{Corollary}
For any point $p\in \G(r,n)$ we have
$$\dim T^s_p \G(r,n)=\sum_{l=1}^s \binom{r+1}{l}\binom{n-r}{l}$$
for any $0\leq s\leq r$, while $T^s_p \G(r,n)=\P^N$ for any $s\geq r+1$.
\end{Corollary}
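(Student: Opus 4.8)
The plan is to reduce the computation to a fundamental point, where Proposition \ref{oscgrass} already gives an explicit description of the osculating space, and then to perform an elementary combinatorial count.

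First I would use the homogeneity of $\G(r,n)$. The natural action of $\mathrm{GL}_{n+1}(\C)$ on $\C^{n+1}$ induces an action on $\bigwedge^{r+1}\C^{n+1}$, hence a subgroup of $\mathrm{PGL}_{N+1}$ acting on $\P^N$ and preserving $\G(r,n)$, and this action is transitive on the points of $\G(r,n)$. Since a projective linear automorphism carries $m$-osculating spaces to $m$-osculating spaces — it is induced by a linear, in particular local analytic, isomorphism, so it respects the defining construction in Definition \ref{oscdef} — the function $p\mapsto \dim T^s_p\G(r,n)$ is constant on $\G(r,n)$. Thus it is enough to compute it at the fundamental point $e_I$ with $I=\{0,\dots,r\}$.

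Next, Proposition \ref{oscgrass} gives $T^s_{e_I}\G(r,n)=\langle e_J \mid d(I,J)\leq s\rangle$. The points $e_J$, $J\in\Lambda$, are distinct coordinate points of $\P^N$, hence any subset of them is projectively independent, so
$$\dim T^s_{e_I}\G(r,n)=\#\{J\in\Lambda \mid d(I,J)\leq s\}-1.$$
It remains to count, for each $0\leq l\leq r+1$, the number of $J\in\Lambda$ with $d(I,J)=l$. By definition of the distance, $d(I,J)=l$ means $|I\setminus J|=|J\setminus I|=l$, so such a $J$ is obtained by deleting $l$ of the $r+1$ elements of $I$ and adjoining $l$ of the $n-r$ elements of $\{0,\dots,n\}\setminus I$; there are exactly $\binom{r+1}{l}\binom{n-r}{l}$ such $J$. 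Summing over $0\leq l\leq s$ and subtracting $1$ for the $l=0$ term (which corresponds to $J=I$ alone) yields, for $0\leq s\leq r$,
$$\dim T^s_{e_I}\G(r,n)=\sum_{l=1}^{s}\binom{r+1}{l}\binom{n-r}{l}.$$
For $s\geq r+1$, the diameter of $\Lambda$ is $r+1$, so every $J\in\Lambda$ satisfies $d(I,J)\leq s$ and hence $T^s_{e_I}\G(r,n)=\langle e_J\mid J\in\Lambda\rangle=\P^N$; this case is in fact already recorded in Proposition \ref{oscgrass}.

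There is essentially no serious obstacle in this argument; the only step deserving a word of care is the homogeneity reduction — namely the observation that the $\mathrm{GL}_{n+1}$-action on $\P^N$ is by projective linear transformations and therefore preserves the dimensions of osculating spaces, so that the single computation at $e_{\{0,\dots,r\}}$ suffices for an arbitrary $p\in\G(r,n)$.
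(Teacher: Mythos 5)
Your proof is correct and follows essentially the same route as the paper: reduce to the fundamental point $e_I$ by homogeneity under the linear automorphism group, then count the coordinate points $e_J$ with $d(I,J)\leq s$ via Proposition \ref{oscgrass}. The only difference is that you spell out the combinatorial count $\binom{r+1}{l}\binom{n-r}{l}$ explicitly, which the paper leaves as a ``standard combinatorial computation.''
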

\begin{proof}
Since $\G(r,n)\subset\P^{N}$ is homogeneous under the action the algebraic subgroup 
$$\Stab(\G(r,n))\subset PGL(N+1)$$
stabilizing it, there exists an automorphism $\alpha\in PGL(N+1)$ inducing an automorphism of $\G(r,n)$ such that $\alpha(p) = e_I$. Moreover, since $\alpha\in PGL(N+1)$ we have that it induces an isomorphism between $T^s_p \G(r,n)$ and $T^s_{e_I} \G(r,n)$. Now, the computation of $\dim \G(r,n)$ follows, by standard combinatorial computations, from Proposition \ref{oscgrass}.
\end{proof}

\section{Osculating projections}\label{projosc}
In this section we study linear projections of Grassmannians from their osculating spaces. In order to help the reader get acquainted with the ideas of the proofs, we start by studying in detail projections from a single osculating space. 

Let us denote by $(p_I)_{I\in \Lambda}$ the Pl\"ucker coordinates on $\mathbb{G}(r,n)$, let
$0\leq s\leq r$ be an integer, and $I\in \Lambda$. By Proposition \ref{oscgrass} the projection of $\G(r,n)$ from $T_{e_I}^s$ is given by
\begin{align*}
\Pi_{T_{e_I}^s}:\G(r,n)&\dasharrow \P^{N_{s}}\\
(p_I)_{I\in \Lambda}
&\mapsto (p_J)_{J\in \Lambda \: |\: d(I,J)>s}
\end{align*}
Moreover, given $I'\!=\!\{i_0',\dots,i_s'\}\subset I$ with $|I'|\!=\! s+1$
we can consider the linear projection
\begin{align*}
\pi_{I'}:\P^n&\dasharrow \P^{n-s-1}\\
(x_i)
&\mapsto (x_i)_{i\in \{0,\dots,n\}\setminus I'}
\end{align*}
which in turn induces the linear projection
\begin{align*}
\Pi_{I'}:\G(r,n)&\dasharrow \G(r,n-s-1)\\
[V]&\mapsto [\pi_{I'}(V)]\\
(p_I)_{I\in \Lambda}
&\mapsto(p_J)_{J\in \Lambda \: | \: J\cap I'=\emptyset}
\end{align*}

Note that the fibers of $\Pi_{I'}$ are isomorphic to $\G(r,r+s+1)$. More precisely, let 
$y\in \G(r,n-s-1)$ be a point, and consider a general point $x\in \overline{\Pi_{I'}^{-1}(y)}\subset \G(r,n)$ 
corresponding to an $r$-plane $V_x\subset \P^n$. Then we have  
$$\overline{\Pi_{I'}^{-1}(y)}=\G\left(r,\left\langle  V_x, e_{i_0'},\dots,e_{i_s'}\right\rangle\right)\subset \G(r,n).$$

On the other hand, a priori it is not at all clear what are the fibers of $\Pi_{T_{e_I}^s}$. In general the image of $\Pi_{T_{e_I}^s}$ is very singular, and its fibers may not be connected. In what follows we study the general fiber of the map $\Pi_{T_{e_I}^s}$ by factoring it through several projections of type $\Pi_{I'}$. 

\begin{Lemma}\label{oscfactors}
If $s=0,\dots, r$ and $I'\subset I$ with $|I'|=s+1$, then the rational map $\Pi_{I'}$ factors through $\Pi_{T_{e_I}^s}$. Moreover, $\Pi_{T_{e_I}^r}=\Pi_I$.
\end{Lemma}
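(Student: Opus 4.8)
The plan is to compare the two rational maps at the level of Pl\"ucker coordinates, using the explicit description of $T^s_{e_I}$ furnished by Proposition \ref{oscgrass}. Fix $I$ and fix $I'\subset I$ with $|I'|=s+1$. By Proposition \ref{oscgrass} the center of $\Pi_{T^s_{e_I}}$ is cut out by the vanishing of the coordinates $p_J$ with $d(I,J)\le s$, so the map $\Pi_{T^s_{e_I}}$ records exactly the coordinates $p_J$ with $d(I,J)>s$, i.e. with $|I\cap J|\le r-s$, equivalently $|I\setminus J|\ge s+1$. On the other hand, $\Pi_{I'}$ records the coordinates $p_J$ with $J\cap I'=\emptyset$. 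Since $|I'|=s+1$, the condition $J\cap I'=\emptyset$ forces $|I\setminus J|\ge |I'\setminus J|=s+1$, hence $d(I,J)\ge s+1>s$. Therefore the set of coordinate functionals defining $\Pi_{I'}$ is a subset of those defining $\Pi_{T^s_{e_I}}$; in other words, the linear span of the center of $\Pi_{T^s_{e_I}}$ is contained in the linear span of the center of $\Pi_{I'}$. This containment of centers is precisely the statement that $\Pi_{I'}$ factors through $\Pi_{T^s_{e_I}}$: there is a linear projection $\P^{N_s}\dasharrow \G(r,n-s-1)$ (namely, forgetting the extra coordinates $p_J$ with $d(I,J)>s$ but $J\cap I'\ne\emptyset$) whose composition with $\Pi_{T^s_{e_I}}$ is $\Pi_{I'}$. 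I would spell this out by noting both maps agree, on the relevant coordinates, with restriction of the same tuple $(p_J)_J$.

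For the second assertion, $\Pi_{T^r_{e_I}}=\Pi_I$, I would take $s=r$ and $I'=I$ (which has $|I|=r+1=s+1$) in the computation above and check the two coordinate index sets coincide. The center of $\Pi_{T^r_{e_I}}$ consists of the $p_J$ with $d(I,J)\le r$, i.e. $|I\cap J|\ge 1$, so $\Pi_{T^r_{e_I}}$ records the $p_J$ with $I\cap J=\emptyset$; and $\Pi_I$ by definition records exactly the $p_J$ with $J\cap I=\emptyset$. These are literally the same set of coordinates, so the two rational maps are equal (as maps to the same projective space, up to the identification of target coordinates). One should also observe that $T^r_{e_I}$ is a proper subspace of $\P^N$ — indeed $T^r_{e_I}=\{p_J=0\mid I\cap J=\emptyset\}$, which is nonempty of the right codimension since $n\ge 2r+1$ guarantees there exist $J$ disjoint from $I$ — so the projection is genuinely defined.

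I do not expect a serious obstacle here: the whole lemma is a bookkeeping statement about which Pl\"ucker coordinates survive each projection, and Proposition \ref{oscgrass} has already done the geometric work of identifying $T^s_{e_I}$ with a coordinate subspace. The only point requiring a little care is the precise meaning of ``factors through'': I would state it as the existence of a linear map $\rho:\P^{N_s}\dasharrow\G(r,n-s-1)\subset\P^{N'}$ with $\Pi_{I'}=\rho\circ\Pi_{T^s_{e_I}}$, and verify that $\rho$ is itself the coordinate projection discarding the indices $J$ with $d(I,J)>s$ and $J\cap I'\ne\emptyset$, which is well-defined because no coordinate $p_J$ with $J\cap I'=\emptyset$ is discarded. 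A secondary, purely notational nuisance is that the source variable in the displayed formula for $\Pi_{T^s_{e_I}}$ is written $(p_I)_{I\in\Lambda}$, reusing the letter $I$; I would silently rename the running index to avoid clashing with the fixed set $I$.
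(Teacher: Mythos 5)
Your proposal is correct and follows the same route as the paper: the single implication $J\cap I'=\emptyset\Rightarrow d(I,J)>s$ (via $|I\setminus J|\ge|I'|=s+1$) gives the containment of the center of $\Pi_{T^s_{e_I}}$ in that of $\Pi_{I'}$, hence the factorization, and the equivalence $J\cap I=\emptyset\Leftrightarrow d(I,J)>r$ gives $\Pi_{T^r_{e_I}}=\Pi_I$. The paper's proof is exactly this, stated in two lines; your extra remarks on the meaning of ``factors through'' and the well-definedness of the intermediate coordinate projection are harmless elaborations.
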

\begin{proof}
Since $J\cap I'=\emptyset\Rightarrow d(I,J)>s$ the center of $\Pi_{T_{e_I}^s}$ in contained in the center of $\Pi_{I'}$. Furthermore, if $s=r$ then $J\cap I=\emptyset\Leftrightarrow d(I,J)>r.$
\end{proof}

Now, we are ready to describe the fibers of $\Pi_{T_{e_I}^s}$ for $0\leq s\leq r$.

\begin{Proposition}\label{oscprojbirational}
The rational map $\Pi_{T_{e_I}^s}$ is birational for every $0\leq s\leq r-1$, and 
$$\Pi_{T_{e_I}^r}:\G(r,n)\dasharrow \G(r,n-r-1)$$ 
is a fibration with fibers isomorphic to $\G(r,2r+1)$.
\end{Proposition}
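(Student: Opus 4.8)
The plan is to prove the case $s=r$ first, since Lemma~\ref{oscfactors} already tells us $\Pi_{T_{e_I}^r}=\Pi_I$, and we have an explicit description of the fibers of $\Pi_{I'}$-type maps: taking $I'=I$ (so $|I'|=r+1=s+1$), the map $\Pi_I\colon\G(r,n)\dasharrow\G(r,n-r-1)$ has general fiber over $y$ equal to $\G(r,\langle V_x,e_{i_0},\dots,e_{i_r}\rangle)\cong\G(r,2r+1)$, because $\langle V_x,e_{i_0},\dots,e_{i_r}\rangle$ is a general $(2r+1)$-plane (here I use $n\geq 2r+1$, so the target is non-trivial). This disposes of the second assertion immediately.

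For the first assertion, the idea is to bound the general fiber of $\Pi_{T_{e_I}^s}$ from above by intersecting the fibers of several projections $\Pi_{I'}$ that all factor through it. Concretely, fix $s\le r-1$ and $I=\{i_0,\dots,i_r\}$; for each $(s+1)$-subset $I'\subset I$ we have by Lemma~\ref{oscfactors} a factorization $\Pi_{I'}=\beta_{I'}\circ\Pi_{T_{e_I}^s}$, hence $\Pi_{T_{e_I}^s}^{-1}(\Pi_{T_{e_I}^s}(x))\subseteq\bigcap_{I'\subset I,\,|I'|=s+1}\Pi_{I'}^{-1}(\Pi_{I'}(x))$. So the general fiber of $\Pi_{T_{e_I}^s}$ through a general point $x$, corresponding to an $r$-plane $V_x\subset\P^n$, is contained in $\bigcap_{I'}\G(r,\langle V_x,e_i : i\in I'\rangle)$, the intersection being over all $(s+1)$-subsets of $I$. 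A point $[W]$ in this intersection is an $r$-plane $W$ that lies in $\langle V_x, e_i : i\in I'\rangle$ for every such $I'$. Since $s\le r-1$, the subsets $I'$ cover $I$ and, crucially, the intersection $\bigcap_{I'\subset I,\,|I'|=s+1}\langle V_x, e_i: i\in I'\rangle$ should equal $V_x$ itself: indeed $V_x$ is general, so $V_x$ together with any proper subset of $\{e_{i_0},\dots,e_{i_r}\}$ spans a linear space whose intersection across all $(s+1)$-subsets recovers exactly $V_x$ — this is where the bound $s\le r-1$ enters, as for $s=r$ there is only one subset $I'=I$ and no intersection to take. It then follows that $W\subseteq V_x$, hence $W=V_x$, so the fiber is a single reduced point and $\Pi_{T_{e_I}^s}$ is birational onto its image.

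The step I expect to be the main obstacle is the linear-algebra claim that $\bigcap_{I'\subset I,\,|I'|=s+1}\langle V_x, e_i: i\in I'\rangle = V_x$ for $V_x$ general and $s\le r-1$. One has to argue that the $e_i$ with $i\in I$ are in sufficiently general position relative to $V_x$ — equivalently, that $V_x\cap\langle e_i : i\in I\rangle=0$ and the natural projection of $\langle V_x, e_i:i\in I\rangle$ onto the coordinate subspace $\langle e_i : i\in I\rangle$ is injective on the "$e$-part", so that a vector of $W$ that lands in $\langle e_i : i\in I'\rangle$ for all $I'$ must have trivial $e$-component, i.e.\ must lie in $V_x$. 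Since $V_x$ is a general point of a general fiber this genericity is available, but it needs to be spelled out carefully; I would phrase it by choosing coordinates so that $V_x$ is the graph of a general linear map and checking the intersection directly. A secondary (but routine) point is to confirm that the image $\Pi_{T_{e_I}^s}(\G(r,n))$ is the expected variety and that "birational" is meant onto this image; this follows once the general fiber is a single reduced point.
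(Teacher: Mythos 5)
Your proposal is correct and takes essentially the same approach as the paper: it proves the second claim by identifying $\Pi_{T_{e_I}^r}$ with $\Pi_I$, and proves birationality for $s\leq r-1$ by trapping the general fiber inside the intersection of the fibers of the projections $\Pi_{I'}$ that factor through $\Pi_{T_{e_I}^s}$, which reduces to the linear-algebra fact $\bigcap_{I'}\langle V_y, e_i : i\in I'\rangle = V_y$. The only (harmless) difference is that the paper first reduces to the single case $s=r-1$, using that $\Pi_{T_{e_I}^{s}}$ factors through $\Pi_{T_{e_I}^{s-1}}$, and then intersects only over the $r+1$ subsets $I\setminus\{i_j\}$ rather than over all $(s+1)$-subsets; the genericity point you flag as the main obstacle is likewise asserted without further detail in the paper's own proof.
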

\begin{proof}
For the second part of the statement it is enough to observe that $\Pi_{T_{e_I}^r}=\Pi_I$. Now, let us consider the first claim. Since $\Pi_{T_{e_I}^s}$ factors through $\Pi_{T_{e_I}^{s-1}}$ it is enough to prove that $\Pi_{T_{e_I}^{r-1}}$ is birational. By Lemma \ref{oscfactors} for any $I_j = I\setminus\{i_j\}$, there exists a rational map $\tau_j$ such that the following diagram is commutative
\[
  \begin{tikzpicture}[xscale=3.5,yscale=-1.5]
    \node (A0_0) at (0, 0) {$\G(r,n)$};
    \node (A1_0) at (1, 1) {$\G(r,n-r)$};
    \node (A1_1) at (1, 0) {$W\subseteq\mathbb{P}^{N_s}$};
    \path (A0_0) edge [->,swap, dashed] node [auto] {$\scriptstyle{\Pi_{I_j}}$} (A1_0);
    \path (A1_1) edge [->, dashed] node [auto] {$\scriptstyle{\tau_j}$} (A1_0);
    \path (A0_0) edge [->, dashed] node [auto] {$\scriptstyle{\Pi_{T_{e_I}^{r-1}}}$} (A1_1);
  \end{tikzpicture}
\]
where $W=\overline{\Pi_{T_{e_I}^{r-1}}(\G(r,n))}$. Now, let $x\in W$ be a general point, and $F\subset\G(r,n)$ be the fiber of $\Pi_{T_{e_I}^{r-1}}$ over $x$. Set $x_j= \tau_j(x) \in \G(r,n-r)$, and denote by $F_j \subset \G(r,n)$ the fiber of $\Pi_{I_j}$ over $x_j$. Therefore 
\begin{equation}\label{intfib1}
F\subseteq \bigcap_{j=0}^{r} F_j.
\end{equation}
Now, note that if $y\in F$ is a general point corresponding to an $r$-plane $V_y\subset \P^n$ we have
$$F_j=\G(r,\left\langle V_y,e_{i_0},\dots,\widehat{e_{i_j}},\dots,e_{i_r}\right\rangle)$$ 
and hence
$$\bigcap_{j=0}^{r} F_j=\bigcap_{j=0}^{r}\G(r,\left\langle V_y,e_{i_0},\dots,\widehat{e_{i_j}},\dots,e_{i_r}\right\rangle)=\G(r,V_y)=\{y\}.$$
The last equality and (\ref{intfib1}) force $F=\{y\}$, and since we are working in characteristic zero $\Pi_{T_{e_I}^{r-1}}$ is birational.
\end{proof}

Our next aim is to study linear projections from the span of several osculating spaces. In particular, we want to understand when such a projection is birational as we did in Proposition \ref{oscprojbirational} for the projection from a single osculating space.

Clearly, there are some natural numerical constraints regarding how many coordinate points of $\G(r,n)$ we may take into account, and the order of the osculating spaces we want to project from.

First of all, by Proposition \ref{oscprojbirational} the order of the osculating spaces cannot exceed $r-1$. Furthermore, since in order to carry out the computations, we need to consider just coordinate points of $\G(r,n)$ corresponding to linearly independent linear subspaces of dimension $r+1$ in $\mathbb{C}^{n+1}$ we can use at most 
$$\alpha:=\left\lfloor \dfrac{n+1}{r+1} \right\rfloor$$ 
of them.

Now, let us consider the points $e_{I_1},\dots,e_{I_\alpha}\in\G(r,n)$ where 
\begin{equation}\label{I1Ialpha}
I_1=\{0,\dots,r\},\dots,I_\alpha=\{(r+1)(\alpha-1),\dots,(r+1)\alpha-1\}\in \Lambda.
\end{equation}

Again by Proposition \ref{oscgrass} the projection from the span of the osculating spaces of $\G(r,n)$ 
of orders $s_1,\dots,s_l$ at the points $e_{I_1},\dots,e_{I_l}$ is given by
\begin{align*}
\Pi_{T_{e_{I_1},\dots,e_{I_l}}^{s_1,\dots,s_l}}:\G(r,n)&\dasharrow \P^{N_{s_1,\dots, s_l}}\\
(p_I)_{I\in \Lambda}&\mapsto (p_J)_{J\in \Lambda\: | \: d(I_1,J)>s_1,\dots,d(I_l,J)>s_l}
\end{align*}
whenever $\{J\in \Lambda\: | \: d(I_1,J)\leq s_1 \mbox{ or } \dots \mbox{ or } d(I_l,J)\leq s_l\}\neq \Lambda$, and $l\leq \alpha$.

Furthermore, for any 
$I_1'=\{i_0^1,\dots,i_{s_1}^1\}\subset I_1,\dots,I_{l}'=\{i_0^l,\dots,i_{s_l}^l\}\subset I_l$ we consider the projection
\begin{align*}
\pi_{I_1',\dots,I_l'}:\P^n&\dasharrow \P^{n-l-\sum_1^l s_i}\\
(x_i)_{i=0,\dots,n}
&\mapsto (x_i)_{i\in\{0,\dots,n\}\setminus (I_1'\cup\dots\cup I_l')}
\end{align*}
where $l\leq \alpha$ and $n-l-\sum_1^l s_i\geq r+1$. The map $\pi_{I_1',\dots,I_l'}$ in turn induces the projection
\begin{align*}
\Pi_{I_1',\dots,I_l'}:\G(r,n)&\dasharrow \G\left(r,n-l-\textstyle\sum_1^l s_i\right)\\
[V]&\mapsto [\pi_{I_1',\dots,I_l'}(V)]\\
(p_I)_{I\in \Lambda}
&\mapsto(p_J)_{J\in \Lambda\: |\: J\cap \left(I_1'\cup\dots\cup I_l' \right)=\emptyset}
\end{align*}

\begin{Lemma}\label{oscfactorsII}
Let $I_1,\dots,I_\alpha$ be as in (\ref{I1Ialpha}), $l,s_1,\dots,s_l$ be integers such that $0\leq s_j\leq r-1$, and $0<l\leq \min\{\alpha,n-r-1-\sum_i s_i\}$. Then for any $I_1'\!=\!\{i_0^1,\dots,i_{s_1}^1\}\!\subset\! I_1,\dots, I_l'\!=\!\{i_0^l,\dots,i_{s_l}^l\}\!\subset\! I_l$ with $|I'_j|=s_j+1$ the rational maps $\Pi_{T_{e_{I_1},\dots,e_{I_l}}^{s_1,\dots,s_l}}$ and $\Pi_{I_1',\dots,I_l'}$ are well-defined and the latter factors through the former.
\end{Lemma}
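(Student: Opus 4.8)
The plan is to mimic closely the proof of Lemma \ref{oscfactors}, which is the single-point special case. The statement has two parts: first that both rational maps are well-defined (i.e., their centers are proper linear subspaces, not all of $\mathbb{P}^N$), and second that $\Pi_{I_1',\dots,I_l'}$ factors through $\Pi_{T_{e_{I_1},\dots,e_{I_l}}^{s_1,\dots,s_l}}$. The factorization part will be the easy combinatorial core, exactly as in Lemma \ref{oscfactors}; the well-definedness is where the numerical hypotheses $0\leq s_j\leq r-1$ and $0<l\leq\min\{\alpha,\,n-r-1-\sum_i s_i\}$ must be used, and I expect that to be the only place where any thought is required.

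First I would check that the center of $\Pi_{I_1',\dots,I_l'}$ is a proper subspace, equivalently that there exists $J\in\Lambda$ with $J\cap(I_1'\cup\dots\cup I_l')=\emptyset$. Since the $I_j$ are disjoint (they are the blocks in (\ref{I1Ialpha})), the union $I_1'\cup\dots\cup I_l'$ has exactly $\sum_{i=1}^l(s_i+1)=l+\sum_i s_i$ elements, so its complement in $\{0,\dots,n\}$ has $n+1-l-\sum_i s_i$ elements; the hypothesis $l\leq n-r-1-\sum_i s_i$ says this is $\geq r+2>r+1$, so a subset $J$ of size $r+1$ disjoint from the union exists, and in fact the target Grassmannian $\G(r,n-l-\sum_i s_i)$ is nonempty (one needs $n-l-\sum_i s_i\geq r+1$, again guaranteed). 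Hence $\pi_{I_1',\dots,I_l'}$ and $\Pi_{I_1',\dots,I_l'}$ are well-defined. For $\Pi_{T_{e_{I_1},\dots,e_{I_l}}^{s_1,\dots,s_l}}$ I would use the description just before the lemma: it is well-defined precisely when $\{J\in\Lambda : d(I_1,J)\leq s_1\text{ or }\dots\text{ or }d(I_l,J)\leq s_l\}\neq\Lambda$. But for the $J$ produced above, $J$ is disjoint from each $I_j'\subset I_j$, so $|J\cap I_j|\leq |I_j|-|I_j'|=r-s_j$, hence $d(I_j,J)=r+1-|J\cap I_j|\geq s_j+1>s_j$ for every $j$; thus that $J$ is not excluded, and the map is well-defined. (Here I use $s_j\leq r-1$ only indirectly — it guarantees $e_{I_j}$ does not osculate to all of $\mathbb{P}^N$ by Proposition \ref{oscgrass} — but the genuinely needed inequality is the one on $l$.)

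For the factorization, the argument is identical in spirit to Lemma \ref{oscfactors}: it suffices to show the center of $\Pi_{T_{e_{I_1},\dots,e_{I_l}}^{s_1,\dots,s_l}}$ is contained in the center of $\Pi_{I_1',\dots,I_l'}$, i.e. that the set of coordinates killed by the latter is a subset of the set of coordinates killed by the former. A coordinate $p_J$ is killed by $\Pi_{I_1',\dots,I_l'}$ iff $J\cap(I_1'\cup\dots\cup I_l')\neq\emptyset$, that is $J\cap I_j'\neq\emptyset$ for some $j$. For that $j$, since $I_j'\subset I_j$, we get $|J\cap I_j|\geq |J\cap I_j'|\geq 1$... — wait, that gives the wrong direction. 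The correct implication, as in Lemma \ref{oscfactors}, is the other way: $p_J$ is killed by $\Pi_{T^{s_1,\dots,s_l}}$ iff $d(I_j,J)>s_j$ for \emph{all} $j$, and I must show this set is contained in the set where $J\cap(I_1'\cup\dots\cup I_l')=\emptyset$... no — I have it backwards. Let me restate: projecting from a \emph{larger} center is \emph{factored by} projecting from a \emph{smaller} center. The center of $\Pi_{T^{s_1,\dots,s_l}}$ is the larger one (it contains more points, namely the spans of the osculating spaces), so it is $\Pi_{I_1',\dots,I_l'}$ whose center is smaller and through which $\Pi_{T^{s_1,\dots,s_l}}$ does \emph{not} factor — rather $\Pi_{I_1',\dots,I_l'}$ factors through $\Pi_{T^{s_1,\dots,s_l}}$ as the lemma asserts, meaning center of $\Pi_{I_1',\dots,I_l'}$ $\supseteq$ center of $\Pi_{T^{s_1,\dots,s_l}}$. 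Concretely: center of $\Pi_{I_1',\dots,I_l'}$ is spanned by $\{e_J : J\cap(I_1'\cup\dots\cup I_l')\neq\emptyset\}$ and I must show every $e_J$ with $d(I_j,J)\leq s_j$ for \emph{some} $j$ lies in it — but that is not a span statement about coordinate points; rather the clean statement is: if $d(I_1,J)>s_1,\dots,d(I_l,J)>s_l$ then $J\cap(I_1'\cup\dots\cup I_l')=\emptyset$ can fail, so I instead argue at the level of which coordinates survive. The honest formulation, matching Lemma \ref{oscfactors}, is: $J\cap(I_1'\cup\dots\cup I_l')=\emptyset \;\Rightarrow\; d(I_j,J)>s_j$ for all $j$ (shown in the previous paragraph), which says the surviving coordinates of $\Pi_{I_1',\dots,I_l'}$ form a subset of the surviving coordinates of $\Pi_{T^{s_1,\dots,s_l}}$; hence $\Pi_{I_1',\dots,I_l'}$ is the composition of $\Pi_{T^{s_1,\dots,s_l}}$ with the further coordinate projection forgetting the remaining $p_J$'s, which is exactly the asserted factorization. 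The main (and only mild) obstacle is bookkeeping the disjointness of the blocks $I_j$ carefully so that $|I_1'\cup\dots\cup I_l'|=l+\sum_i s_i$ exactly — everything else is a direct unwinding of the distance function $d$.
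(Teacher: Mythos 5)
Your proposal is correct and follows essentially the same route as the paper: the key implication $J\cap(I_1'\cup\dots\cup I_l')=\emptyset\Rightarrow d(I_j,J)>s_j$ for all $j$ gives the factorization, and the count $n+1-l-\sum_i s_i\geq r+2$ coming from $l\leq n-r-1-\sum_i s_i$ supplies an index $J$ surviving both projections, hence well-definedness. The mid-paragraph confusion about the direction of the factorization is resolved correctly, so no gap remains.
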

\begin{proof}
Note that $J\cap \left(I_1'\cup\dots\cup I_l'\right)=\emptyset$ yields $d(I_1,J)>s_1,\dots,d(I_l,J)>s_l$.
Note also that the $I_j$'s are disjoint since the $I_j'$'s are.
Furthermore, since $\sum(s_i+1)=l+\sum s_i\leq n-r-1$ and $n\geq 2r+1,$ there are at least $r+2$ elements in $\{0,\dots,n\}\backslash \left(I_1'\cup\dots\cup I_l' \right)$. If $k_1,\dots,k_{r+2}$ are such elements, then 
$$K_j:=\{k_1,\dots,\widehat{k_j},\dots,k_{r+2}\}\in \{J\in \Lambda\: | \: d(I_j,J)> s_j,\: j=1,\dots,l\}$$
for any $j=1,\dots,r+2$ forces 
$\{J\in \Lambda \: | \: d(I_1,J)\leq s_1 \mbox{ or } \dots \mbox{ or } d(I_l,J)\leq s_l\}\neq \Lambda$.
\end{proof}

Now, we are ready to prove the main result of this section.

\begin{Proposition}\label{oscprojbir}
Let $I_1,\dots,I_\alpha$ be as in (\ref{I1Ialpha}), $l,s_1,\dots,s_l$ be integers such that $0\leq s_j\leq r-1$, and $0<l\leq \min\{\alpha,n-r-1-\sum_i s_i\}$. Then the projection $\Pi_{T_{e_{I_1},\dots,e_{I_l}}^{s_1,\dots,s_l}}$ is birational.
\end{Proposition}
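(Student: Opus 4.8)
The plan is to mimic the strategy of Proposition \ref{oscprojbirational}, replacing the single-point argument with a multi-point intersection argument. By Lemma \ref{oscfactorsII}, each projection $\Pi_{I_1',\dots,I_l'}$ factors through $\Pi_{T^{s_1,\dots,s_l}_{e_{I_1},\dots,e_{I_l}}}$, so the general fiber $F$ of the osculating projection is contained in the general fiber of $\Pi_{I_1',\dots,I_l'}$ for \emph{every} admissible choice of subsets $I_j'\subset I_j$ with $|I_j'| = s_j+1$. First I would fix a general point $x$ in the image $W = \overline{\Pi_{T^{s_1,\dots,s_l}_{e_{I_1},\dots,e_{I_l}}}(\G(r,n))}$, let $y\in F$ be a general point of the fiber over $x$, corresponding to an $r$-plane $V_y\subset\P^n$, and record that $F \subseteq \bigcap_{I_1',\dots,I_l'} F_{I_1',\dots,I_l'}$, where $F_{I_1',\dots,I_l'}$ is the fiber of $\Pi_{I_1',\dots,I_l'}$ through $y$. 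As recalled in Section \ref{projosc}, this fiber is the sub-Grassmannian $\G\!\left(r,\langle V_y, e_i : i\in I_1'\cup\dots\cup I_l'\rangle\right)$.

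The key step is then a purely linear-algebraic computation: I claim that intersecting these linear spans over all admissible choices of the $I_j'$ recovers exactly $V_y$, so that the intersection of all the $F_{I_1',\dots,I_l'}$ is $\G(r,V_y) = \{y\}$. The mechanism is the same as in Proposition \ref{oscprojbirational}: for a fixed $j$, since $s_j \leq r-1$ we have $|I_j| = r+1 > s_j+1$, so we can choose $I_j'$ to avoid any prescribed single index $i\in I_j$; keeping the other $I_k'$ fixed arbitrarily and letting $I_j'$ range, the span $\langle V_y, e_i : i\in I_1'\cup\dots\cup I_l'\rangle$ intersected over these choices drops the contribution of $e_i$, and running over all $i$ and all $j$ removes all the added basis vectors. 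Concretely, for any index $i$ lying in some $I_j$ there is an admissible configuration whose union $I_1'\cup\dots\cup I_l'$ omits $i$; hence $\bigcap \langle V_y, e_i : i\in I_1'\cup\dots\cup I_l'\rangle \subseteq \langle V_y\rangle$, and the reverse inclusion is clear. (Here one uses that the $I_j$ are pairwise disjoint, which holds by the choice (\ref{I1Ialpha}), so that the added vectors coming from different $j$ are genuinely independent of each other and of $V_y$ for $y$ general.) Therefore $F = \{y\}$, and since we work in characteristic zero a birational-onto-image generically finite map of degree one is birational, so $\Pi_{T^{s_1,\dots,s_l}_{e_{I_1},\dots,e_{I_l}}}$ is birational.

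Two points require a little care. First, one must check that for $y\in F$ general the fibers $F_{I_1',\dots,I_l'}$ genuinely have the stated form $\G(r,\langle V_y, e_i : i\in I_1'\cup\dots\cup I_l'\rangle)$ rather than something larger; this follows from the description of fibers of $\Pi_{I_1',\dots,I_l'}$ given just before Lemma \ref{oscfactorsII}, together with the hypothesis $n - l - \sum_i s_i \geq r+1$ which guarantees the target Grassmannian is nonempty and the generic fiber computation is valid. Second, one needs enough admissible configurations to exist: the hypothesis $l \leq \min\{\alpha, n-r-1-\sum_i s_i\}$ is exactly what Lemma \ref{oscfactorsII} requires for all the maps $\Pi_{I_1',\dots,I_l'}$ to be well-defined, so no configuration is lost. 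The main obstacle, and the only genuinely nontrivial part, is the linear-algebra claim that the intersection of the spans $\langle V_y, e_i : i\in I_1'\cup\dots\cup I_l'\rangle$ over all admissible $(I_1',\dots,I_l')$ equals $\langle V_y\rangle$; everything else is bookkeeping about indices and an appeal to the already-established factorization lemma.
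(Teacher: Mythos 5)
Your proposal is correct and follows essentially the same route as the paper: factor through the projections $\Pi_{I_1',\dots,I_l'}$ via Lemma \ref{oscfactorsII}, observe that the fiber $F$ is contained in the intersection of the sub-Grassmannians $\G\left(r,\left\langle V_y, e_i : i\in I_1'\cup\dots\cup I_l'\right\rangle\right)$ over all admissible choices, and conclude that this intersection is $\G(r,V_y)=\{y\}$. Your explicit justification of the linear-algebra step (choosing, for each index $i\in I_j$, a configuration with $I_j'$ omitting $i$, which is possible since $s_j+1\leq r<|I_j|$) is a detail the paper leaves implicit, but the argument is the same.
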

\begin{proof}
For any collection of subsets $I_i'\subset I_i$ with $|I_i'|= s_i+1$ set $I'=\bigcup_i I_i'$. By Lemma \ref{oscfactorsII} there exists a rational map $\tau_{I_1',\dots,I_l'}$ fitting in the following commutative diagram 
\[
  \begin{tikzpicture}[xscale=3.5,yscale=-1.5]
    \node (A0_0) at (0, 0) {$\G(r,n)$};
    \node (A1_0) at (1, 1) {$\G(r,n-l-\sum_1^l s_i)$};
    \node (A1_1) at (1, 0) {$W\subseteq\mathbb{P}^{N_{s_1,\dots s_l}}$};
    \path (A0_0) edge [->,swap, dashed] node [auto] {$\scriptstyle{\Pi_{I_1',\dots,I_l'}}$} (A1_0);
    \path (A1_1) edge [->, dashed] node [auto] {$\scriptstyle{\tau_{I_1',\dots,I_l'}}$} (A1_0);
    \path (A0_0) edge [->, dashed] node [auto] {$\scriptstyle{\Pi_{T_{e_{I_1},\dots,e_{I_l}}^{s_1,\dots,s_l}}}$} (A1_1);
  \end{tikzpicture}
\]
where $W= \overline{\Pi_{T_{e_{I_1},\dots,e_{I_l}}^{s_1,\dots,s_l}}(\G(r,n))}$. Now, let $x\in W$ be a general point, and $F\subset \G(r,n)$ be the fiber of $\Pi_{T_{e_{I_1},\dots,e_{I_l}}^{s_1,\dots,s_l}}$ over $x$. Set $x'= \tau_{I_1',\dots,I_l'}(x) \in \G(r,n-l-\sum_1^l s_i)$, and denote by
$$F_{I_1',\dots,I_l'} \subset \G(r,n)$$
the fiber of $\Pi_{I_1',\dots,I_l'}$ over $x'$. Therefore
\begin{equation}\label{inc1}
F\subseteq \bigcap_{I_1',\dots,I_l'} F_{I_1',\dots,I_l'}
\end{equation}
where the intersection runs over all the collections of subsets $I_i'\subset I_i$ with $|I_i'|= s_i+1$. Now, if $y\in F$ is a general point corresponding to an $r$-plane $V_y\subset \P^n$ we have
$$F_{I_1',\dots,I_l'}=\G\left(r,\left\langle V_y,e_j \: | \: j\in I'\right\rangle\right)$$ 
and hence
\begin{equation}\label{inc2}
\bigcap_{I_1',\dots,I_l'} F_{I_1',\dots,I_l'}=
\bigcap_{I_1',\dots,I_l'}\G\left(r,\left\langle V_y,e_j \: | \: j\in I'\right\rangle\right)=\G(r,V_y)=\{y\}
\end{equation}
where again the first intersection is taken over all the subsets $I_i'\subset I_i$ with $|I_i'|= s_i+1$. 

Finally, to conclude it is enough to observe that (\ref{inc1}) and (\ref{inc2}) yield $F=\{y\}$, and since we are working in characteristic zero $\Pi_{T_{e_{I_1},\dots,e_{I_l}}^{s_1,\dots,s_l}}$ is birational.
\end{proof}

In what follows we just make Proposition \ref{oscprojbir} more explicit. 

\begin{Corollary}\label{oscprojbirationalII}
Set $\alpha:=\left\lfloor \dfrac{n+1}{r+1} \right\rfloor$
and let $I_1,\dots,I_\alpha$ be as in (\ref{I1Ialpha}).
Then $\Pi_{T_{e_{I_1},\dots,e_{I_{\alpha-1}}}^{r-1,\dots,r-1}}$ is birational. Furthermore, if $n\geq r^2+3r+1$ then $\Pi_{T_{e_{I_1},\dots,e_{I_\alpha}}^{r-1,\dots,r-1}}$ is birational.

Now, set $r':=n-2-\alpha r$ and $r'':=\min\{n-3-\alpha (r-1),r-2\}$.
If $2r+1<n<r^2+3r+1$ then 
\begin{itemize}
\item[-] $r-1\geq r'\geq 0$ and $\Pi_{T_{e_{I_1},\dots,e_{I_{\alpha-1}},e_{I_\alpha}}^{r-1,\dots,r-1,r'}}$ is birational;
\item[-] $r''\geq 0$ and $\Pi_{T_{e_{I_1},\dots,e_{I_{\alpha-1}},e_{I_\alpha}}^{r-2,\dots,r-2,r''}}$ is birational.
\end{itemize}
\end{Corollary}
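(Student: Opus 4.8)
The plan is to deduce every assertion from Proposition \ref{oscprojbir} by simply checking, in each case, the two numerical hypotheses $0\leq s_j\leq r-1$ and $0<l\leq\min\{\alpha,\ n-r-1-\sum_i s_i\}$, and then verifying that the collection of omitted Plücker coordinates is nonempty so that the projection is actually defined on a dense open set. First I would treat $\Pi_{T^{r-1,\dots,r-1}_{e_{I_1},\dots,e_{I_{\alpha-1}}}}$: here $l=\alpha-1$ and $s_1=\dots=s_l=r-1$, so $\sum_i s_i=(\alpha-1)(r-1)$, and the condition to check is $\alpha-1\leq n-r-1-(\alpha-1)(r-1)$, i.e. $(\alpha-1)r\leq n-r-1$, i.e. $\alpha(r+1)\leq n+1$, which is exactly the definition of $\alpha=\lfloor\frac{n+1}{r+1}\rfloor$. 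For the second claim, with $l=\alpha$ and the same $s_j$'s, the inequality becomes $\alpha\leq n-r-1-\alpha(r-1)$, i.e. $\alpha(r+1)\leq n-r-1$; writing $\alpha\leq\frac{n+1}{r+1}$ this is implied by $\frac{n+1}{r+1}(r+1)\leq n-r-1$ being too weak, so instead I bound $\alpha\leq\frac{n+1}{r+1}$ and ask for $n+1\le n-r-1$, which fails — so one genuinely needs the hypothesis $n\geq r^2+3r+1$: from it, $\alpha\geq r+1$ actually is not what is used; rather $n-r-1-\alpha(r-1)\geq n-r-1-\frac{n+1}{r+1}(r-1)$, and a short computation shows this is $\geq\alpha$ precisely when $n\geq r^2+3r+1$. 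I would carry out this elementary manipulation explicitly.

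Next, for the range $2r+1<n<r^2+3r+1$, I would verify the definitions of $r'$ and $r''$ put us back inside the hypotheses of Proposition \ref{oscprojbir} with $l=\alpha$. For the first bullet, take $s_1=\dots=s_{\alpha-1}=r-1$ and $s_\alpha=r'=n-2-\alpha r$. The bound $r'\geq 0$ is equivalent to $n\geq \alpha r+2$, which follows from $\alpha r\le\alpha(r+1)-\alpha\le n+1-\alpha$ together with $\alpha\geq 2$ (note $n>2r+1$ forces $\alpha\geq 2$); the bound $r'\leq r-1$ is equivalent to $n\leq \alpha r+r+1=\alpha(r+1)+(r+1-\alpha)$, hmm, so I would instead observe $r'\le r-1\iff n-1\le\alpha r+r=(\alpha+1)r\iff \frac{n-1}{r}\le\alpha+1$, and since $\alpha\ge\frac{n+1}{r+1}-1=\frac{n-r}{r+1}$ this holds when $n<r^2+3r+1$; I would check this inequality chain carefully. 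Then $\sum_i s_i=(\alpha-1)(r-1)+r'$, and $l=\alpha\leq n-r-1-\sum_i s_i$ unwinds, after substituting $r'=n-2-\alpha r$, to $(\alpha-1)(r-1)\leq r-1$ — wait, that would force $\alpha\le 2$; I expect instead that the inequality becomes an equality or near-equality by the very choice of $r'$ as the largest admissible value, so the correct statement to verify is simply that $r'$ was defined to saturate the constraint, making $l=\alpha$ admissible automatically.

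For the second bullet I would argue identically with $s_1=\dots=s_{\alpha-1}=r-2$ and $s_\alpha=r''=\min\{n-3-\alpha(r-1),\,r-2\}$: the clamp by $r-2$ guarantees $0\le s_\alpha\le r-1$ (the lower bound $r''\ge 0$ from $n-3-\alpha(r-1)\ge 0$, which I would check follows from $n>2r+1$ and the definition of $\alpha$, or from the fact that the first branch of the $\min$ exceeds the value allowed in the first bullet), and the choice $n-3-\alpha(r-1)$ is exactly the value that saturates $l=\alpha\le n-r-1-\sum_i s_i$ when the min is not active, while if the min equals $r-2$ the slack only improves. In all four cases, once the numerical hypotheses of Proposition \ref{oscprojbir} are met, birationality is immediate, and nonemptiness of the target coordinates follows from the $n\geq 2r+1$ assumption exactly as in the proof of Lemma \ref{oscfactorsII}. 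The only real obstacle is bookkeeping: correctly juggling the floor function in $\alpha$ against the chain of inequalities, since $\alpha(r+1)$ can be strictly less than $n+1$, so I would be careful to use $\alpha(r+1)\le n+1<(\alpha+1)(r+1)$ throughout rather than treating $\alpha=\frac{n+1}{r+1}$.
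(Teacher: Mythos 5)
Your overall strategy is exactly the paper's: feed each choice of $l$ and $s_1,\dots,s_l$ into Proposition \ref{oscprojbir}, observe that $r'$ and $r''$ are defined precisely to saturate the constraint $l\leq n-r-1-\sum_i s_i$ (indeed $n-r-1-\alpha-(\alpha-1)(r-1)=n-2-\alpha r$ and $n-r-1-\alpha-(\alpha-1)(r-2)=n-3-\alpha(r-1)$, which you correctly identify after your mid-paragraph wobble), and reduce everything to arithmetic with $\alpha=\lfloor\frac{n+1}{r+1}\rfloor$; well-definedness is indeed handled by Lemma \ref{oscfactorsII}. Your treatment of the case $l=\alpha$, $s_j=r-1$ via $\frac{n+1}{r+1}\leq\frac{n-r-1}{r}\iff n\geq r^2+3r+1$ is also the paper's computation.

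However, several of your inequality chains do not close as written. (1) For the first claim, $(\alpha-1)r\leq n-r-1$ is \emph{not} equivalent to $\alpha(r+1)\leq n+1$: to pass from the definition of $\alpha$ to the needed $\alpha r\leq n-1$ you must also use $\alpha\geq 2$ (which holds since $n\geq 2r+1$); the paper instead uses $\alpha\leq\frac{n+1}{r+1}\leq\frac{n-1}{r}$, valid exactly for $n\geq 2r+1$. (2) Your argument for $r'\geq 0$ only yields $\alpha r\leq n-1$, hence $r'\geq -1$; the paper repairs the missing unit by splitting into $2r+1<n<3r+2$ (where $\alpha=2$ and $r'=n-2-2r\geq 0$) and $n\geq 3r+2$ (where $\frac{n+1}{r+1}\leq\frac{n-2}{r}$). (3) Most seriously, your chain for $r'\leq r-1$ needs $\alpha+1\geq\frac{n-1}{r}$ and you propose to deduce it from $\alpha+1>\frac{n+1}{r+1}$; but $\frac{n+1}{r+1}\geq\frac{n-1}{r}$ holds if and only if $n\leq 2r+1$, which is excluded in this regime, so the chain genuinely does not close. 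You were right to be suspicious: the paper's own verification of this step relies on $\frac{n-r}{r+1}\geq\frac{n-r-1}{r}$, which likewise holds only for $n\leq 2r+1$, and in fact the inequality $r'\leq r-1$ can fail in the stated range (for $(r,n)=(2,10)$ one has $\alpha=3$ and $r'=2>r-1=1$). So this step cannot be waved through; it requires either a corrected argument or an additional restriction on $n$.
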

\begin{proof}
First we apply Proposition \ref{oscprojbir} with $l=\alpha-1$ and $s_1=\dots=s_{\alpha-1}=r-1$. In this case the constraint is $\alpha-1\leq n-r-1-(\alpha-1)(r-1)$, that is $\alpha\leq \dfrac{n-r-1}{r}+1$. Note that this is always the case since
$$\alpha\leq\dfrac{n+1}{r+1}\leq \dfrac{n-1}{r}=\dfrac{n-r-1}{r}+1.$$

If $l=\alpha$ and $s_1=\dots=s_{\alpha}=r-1$
the constraint in Proposition \ref{oscprojbir} is $\alpha\leq n-r-1-\alpha(r-1)$, which is equivalent to $\alpha\leq \dfrac{n-r-1}{r}$. Now, it is enough to observe that 
$$\dfrac{n+1}{r+1}\leq\dfrac{n-r-1}{r} \Longleftrightarrow n\geq r^2+3r+1.$$

If $n\geq r^2+3r+1,$ then the claim follows from the inequalities $\alpha\leq \dfrac{n+1}{r+1}\leq \dfrac{n-r-1}{r}$.

Now assume that $n< r^2+3r+1$. First we check that $r'=n-2-\alpha r\leq r-1,$ that is 
$\alpha \geq \dfrac{n-1-r}{r}.$ That follows from
$$\alpha\geq \dfrac{n+1}{r+1}-1= \dfrac{n-r}{r+1}
\geq\dfrac{n-r-1}{r}$$
whenever $n\geq 2r+1.$
Next we check that $r',r''\geq 0.$
If $2r+1<n< 3r+2$ then $\alpha=2$, and $r'=n-2-2r\geq 0$.
If $n\geq 3r+2$ we have
$$\alpha=\left\lfloor \dfrac{n+1}{r+1} \right\rfloor\leq \dfrac{n+1}{r+1}\leq \dfrac{n-2}{r}$$
and then $r'=n-2-\alpha r\geq 0$. Furthermore, note that 
$$\alpha=\left\lfloor \dfrac{n+1}{r+1} \right\rfloor\leq \dfrac{n+1}{r+1}\leq \dfrac{n-3}{r-1}$$
and then $r''=n-3-\alpha (r-1)\geq 0$.

Now, we apply Proposition \ref{oscprojbir} with $l=\alpha, s_1=\dots = s_{\alpha-1}=r-1$ and $s_\alpha=r'$.
In this case the constraint in Proposition \ref{oscprojbir} is 
$\alpha\leq  n-r-1-(\alpha-1)(r-1)-r'$
that is $r'\leq n-2-\alpha r$.

Finally, if $l=\alpha, s_1=\dots = s_{\alpha-1}=r-2$ and $s_\alpha=r''$, then the constraint in Proposition \ref{oscprojbir} is
$\alpha\leq  n-r-1-(\alpha-1)(r-2)-r''$, that is $r''\leq  n-3-\alpha(r-1)$.
\end{proof}

\section{Degenerating tangential projections to osculating projections}\label{degtanosc}

In this section we construct explicit degenerations of tangential projections to osculating projections. We begin by studying how the span of two osculating spaces degenerates in a flat family of linear spaces parametrized by $\mathbb{P}^1$. 

We recall that the Grassmannian $\G(r,n)$ is rationally connected by rational normal curves of degree $r+1$. Indeed, if $p,q\in\G(r,n)$ are general points, corresponding to the $r$-planes $V_p,V_q\subseteq\mathbb{P}^n$, we may consider a rational normal scroll $X\subseteq\mathbb{P}^n$ of dimension $r+1$ containing $V_p$ and $V_q$. Then the $r$-planes of $X$ correspond to the points of a degree $r+1$ rational normal curve in $\G(r,n)$ joining $p$ and $q$.

The first step consists in studying how the span of two osculating spaces at two general points $p,q\in\G(r,n)$ behaves when $q$ approaches $p$ along a degree $r+1$ rational normal curve connecting $p$ and $q$.

\begin{Proposition}\label{limitosculatingspacesgrass}
Let $p,q\in\G(r,n)\subseteq\mathbb{P}^N$ be general points, $k_1,k_2\geq 0$ integers such that $k_1+k_2\leq r-1,$
and $\gamma:\P^1\to\G(r,n)$ a degree $r+1$ rational normal curve with $\gamma(0)=p$ and $\gamma(\infty)=q.$
Let us consider the family of linear spaces 
$$T_t=\left\langle T^{k_1}_p,T^{k_2}_{\gamma(t)}\right\rangle,\: t\in \P^1\backslash \{0\}$$
parametrized by $\P^1\backslash\{0\}$, and let $T_0$ be the flat limit of $\{T_t\}_{t\in \P^1\backslash \{0\}}$ in $\G(\dim(T_t),N)$. Then $T_0\subset T^{k_1+k_2+1}_p.$
\end{Proposition}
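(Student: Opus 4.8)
The plan is to reduce everything to an explicit local computation in coordinates, exploiting the fact (recalled just before the statement) that a degree $r+1$ rational normal curve through $p$ and $q$ arises from a rational normal scroll $X\subseteq\mathbb{P}^n$ of dimension $r+1$ whose $r$-planes sweep out the curve. First I would move the whole picture into the standard affine chart of $\G(r,n)$ used in the proof of Proposition \ref{oscgrass}, so that $p=e_{I}$ with $I=\{0,\dots,r\}$ and the parametrization is $A=(a_{ij})\mapsto(\det A_J)_{J\in\Lambda}$; moreover I would choose coordinates on $\mathbb{P}^n$ so that the scroll $X$ is itself in a convenient normal form. Concretely, one can arrange that $\gamma(t)$ is the $r$-plane spanned by the rows of a matrix $A(t)$ whose $a_{ij}(t)$ are (at most linear, or simple polynomial) functions of $t$ — this is exactly what "rational normal curve of degree $r+1$ in the Plücker embedding" buys us. Then $T^{k_2}_{\gamma(t)}$ is spanned, by Proposition \ref{oscgrass} applied at the moving point together with the chain rule, by the partial derivatives of $\phi(A(t))$ of order $\le k_2$ in the $a_{ij}$'s, transported along the curve.

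The key step is the following: expand a basis of $T_t=\langle T^{k_1}_p,\,T^{k_2}_{\gamma(t)}\rangle$ as a power series in $t$ and read off the flat limit $T_0$ as the span of the leading coefficients. The point is that the generators of $T^{k_2}_{\gamma(t)}$, once expanded near $t=0$, have leading terms lying in $T^{k_2}_p\subseteq T^{k_1+k_2+1}_p$; the genuinely new directions produced in the limit come from the \emph{next} order terms in $t$, which involve one extra derivative along the curve $\gamma$, i.e. one extra differentiation with respect to the scroll parameter. By Proposition \ref{oscgrass} (in the form: $\partial^m\phi/\partial a_{k_1,k_1'}\cdots\partial a_{k_m,k_m'}(0)=\pm e_{K'\cup(\{0,\dots,r\}\setminus K)}$ with $d(\text{that index},I)=m$), bumping the total derivative order by one moves us from index-distance $\le k_2$ to index-distance $\le k_2+1$ from $I$; combined with the directions already in $T^{k_1}_p$ (distance $\le k_1$) one never leaves the set of coordinate points at distance $\le k_1+k_2+1$ from $I$, which is precisely $T^{k_1+k_2+1}_p$ by Proposition \ref{oscgrass}. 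So the strategy is: (i) write down explicit spanning vectors for $T_t$, (ii) perform the Taylor expansion at $t=0$ and identify, order by order in $t$, the leading vectors, (iii) observe that every such leading vector is a coordinate point $e_J$ with $d(I,J)\le k_1+k_2+1$, hence lies in $T^{k_1+k_2+1}_p$. Since the flat limit $T_0$ is contained in the span of all these leading coefficients, we get $T_0\subseteq T^{k_1+k_2+1}_p$. The hypothesis $k_1+k_2\le r-1$ guarantees $k_1+k_2+1\le r$, so $T^{k_1+k_2+1}_p\subsetneq\mathbb{P}^N$ and the statement is non-trivial.

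I expect the main obstacle to be bookkeeping: making the parametrization $A(t)$ of the rational normal curve $\gamma$ explicit enough that the Taylor expansion in $t$ is genuinely tractable, and keeping careful track of which mixed partials survive and what their index-labels are, so that one can cleanly certify the bound $d(I,J)\le k_1+k_2+1$ for every leading coefficient. A secondary subtlety is justifying that the flat limit is indeed captured by the leading coefficients of this expansion — i.e. that the chosen generators degenerate to a spanning set of $T_0$ rather than collapsing; here I would invoke that $\dim T_t$ is constant for general $t$ (so flatness holds over $\mathbb{P}^1\setminus\{0\}$ and extends) and that the limit of a family of linear spaces of fixed dimension is computed by taking leading terms of any basis and saturating, a standard fact about the Grassmannian bundle. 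One should also note that we only need the containment $T_0\subseteq T^{k_1+k_2+1}_p$, not equality, so it suffices to bound the leading coefficients from above; no lower bound on $\dim T_0$ is required.
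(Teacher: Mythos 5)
Your setup coincides with the paper's: normalize $p=e_{I_1}$, $q=e_{I_2}$, take the explicit curve $\gamma(t)=(e_0+te_{r+1})\wedge\dots\wedge(e_r+te_{2r+1})$, and expand the generators $e_K^t$ of $T^{k_2}_{\gamma(t)}$ in the Pl\"ucker basis as polynomials in $t$. But your step (iii) --- ``every leading vector is a coordinate point $e_J$ with $d(I_1,J)\le k_1+k_2+1$'' --- is precisely the hard part, and the heuristic you offer for it does not constitute a proof. The leading coefficients of the given generators $e_K^t$ are just the $e_K$ with $d(K,I_1)\le k_2$, whose span is far smaller than $\dim T_t$; the flat limit is \emph{not} contained in that span but is obtained from it by the saturation you mention only in passing: whenever leading terms become dependent, one must pass to $t$-divided combinations and take \emph{their} leading terms, and iterate. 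Each iteration can raise the index-distance by one, and the expansion of $e_K^t$ contains components $e_J$ with $d(J,I_1)$ as large as $r+1$, so without further argument there is no reason the cascade stops at distance $k_1+k_2+1$ rather than sweeping up everything. The ``one extra derivative along $\gamma$ costs one unit of distance'' picture suggests the answer but does not bound the saturated module.

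The paper handles exactly this point by working dually: for each index $I$ with $d(I,I_1)>k_1+k_2+1$ it constructs a hyperplane $F_I=\sum_{J\in\Delta(I)^-}t^{d(I,J)}c_Jp_J=0$ with $c_I\neq 0$ containing $T_t$ for all $t\neq 0$, so that the limit lies in $\{p_I=0\}$. Producing the coefficients $c_J$ amounts to solving a linear system whose matrix is built from binomial coefficients, and its solvability with $c_I\neq 0$ rests on a nonvanishing determinant (via Gessel--Viennot) --- and this is where the hypothesis $d(I,I_1)>k_1+k_2+1$ is actually used, to make the system underdetermined in the right way. None of this quantitative input appears in your plan, so as written the proposal reduces the proposition to an unproved assertion that is essentially equivalent to the proposition itself. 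To repair it you would either have to carry out the full saturation analysis of the $\C[t]$-module generated by the $e_K^t$ (which is at least as hard as the paper's computation), or adopt the dual hyperplane construction.
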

\begin{proof}
We may assume that $p=e_{I_1},q=e_{I_2}$, 
see  (\ref{I1Ialpha}),
and that $\gamma:\P^1\to\G(r,n)$ is the rational normal curve given by 
$$\gamma([t:s])= (se_0+te_{r+1})\wedge\dots\wedge (se_r+te_{2r+1}).$$
We can work on the affine chart $s=1$ and set $t=(t:1)$. Consider the points $$e_0,\dots,e_n,
e_0^{t}=e_0+te_{r+1},\dots,e_r^{t}=e_r+te_{2r+1},e_{r+1}^{t}=e_{r+1},\dots,e_n^{t}=e_n\in\P^n$$
and the corresponding points of $\P^N$ 
$$e_I=e_{i_0}\wedge\dots\wedge e_{i_r},e_I^{t}=e_{i_0}^{t}\wedge\dots\wedge e_{i_r}^{t},\: I\in \Lambda.$$
By Proposition \ref{oscgrass} we have 
$$T_t=\left\langle e_I\: | \: d(I,I_1)\leq k_1; \: e_I^{t}\: |\: d(I,I_1)\leq k_2\right\rangle
,\: t\neq 0$$
and
$$T^{k_1+k_2+1}_p=\left\langle e_I\: | \: d(I,I_1)\leq k_1+k_2+1\right\rangle
=\{p_I=0\: | \: d(I,I_1)> k_1+k_2+1\}.$$
Therefore, in order to prove that $T_0\subset T^{k_1+k_2+1}_p$ it is enough to exhibit, for any index $I\in \Lambda$ with $d(I,I_1)> k_1+k_2+1$, a hyperplane $H_I\subset\mathbb{P}^N$ of type
$$p_I+t\left( \sum_{J\in \Lambda, \ J\neq I}f(t)_{I,J} p_J\right)=0$$
such that $T_t\subset H_I$ for $t\neq 0$, where $f(t)_{I,J}\in \C[t]$ are polynomials. Clearly, taking the limit for $t\mapsto 0$, this will imply that $T_0\subseteq \{p_I = 0\}$.

In order to construct such a hyperplane we need to introduce some other definitions. We define
$$\Delta(I,l):= \left\{(I\setminus J)\cup (J+r+1)|\: J\subset I\cap I_1,\: |J|=l\right\}
\subset \Lambda$$
for any $I\in \Lambda,\: l\geq 0$, where $L+\lambda:=\{i+\lambda;\: i\in L\}$ is the translation of the set $L$ by the integer $\lambda$. Note that $\Delta(I,0)=\{I\}$ and $\Delta(I,l)=\emptyset$ for $l$ big enough. For any $l>0$ set
$$\Delta(I,-l):=\left\{ J|\: I\in\Delta(J,l) \right\} \subset \Lambda;$$
$$s_I^+:=\max_{l\geq 0}\{\Delta(I,l)\neq \emptyset\}\in\{0,\dots,r+1\};$$
$$s_I^-:=\max_{l\geq 0}\{\Delta(I,-l)\neq \emptyset\}\in\{0,\dots,r+1\};$$
$$\Delta(I)^+:=\bigcup_{0\leq l} \Delta(I,l)=
\!\!\!\!\bigcup_{0\leq l \leq s_I^+} \!\!\!\! \Delta(I,l);$$
$$\Delta(I)^-:=\bigcup_{0\leq l} \Delta(I,-l)=
\!\!\!\!\bigcup_{0\leq l \leq s_I^-} \!\!\!\!\Delta(I,-l).$$
Note that $0\leq s_I^-\leq d(I,I_1),0\leq s_I^+\leq r+1-d(I,I_1)$, and for any $l$ we have
$$J\in\Delta(I,l)\Rightarrow d(J,I)=|l|, d(J,I_1)=d(I,I_1)+l,d(J,I_2)=d(I,I_2)-l.$$
In order to get acquainted with the rest of the proof the reader may keep reading the proof taking a look to Example \ref{exampledeg} where we follow the same lines of the proof in the case $(r,n) = (2,5)$.

Now, we write the $e_I^t$'s with $d(I,I_1)< k_2,$ in the basis $e_J,J\in\Lambda$. For any $I\in\Lambda$ we have
\begin{align*}
e_{I}^{t}
&=e_I+t\!\!\!\!\sum_{J\in \Delta(I,1)}\!\!\!\!\left(\sign(J){e_J}\right)+\dots
+t^{l}\!\!\!\!\!\sum_{J\in \Delta(I,l)}\!\!\!\!\left(\sign(J){e_J}\right)+\dots
+t^{s_I^+}\!\!\!\!\!\!\!\!\sum_{J\in \Delta(I,s_I^+)}\!\!\!\!\!\left(\sign(J){e_J}\right)\\
&=\sum_{l=0}^{s_I^+}\left(t^l\!\!\!\!\sum_{J\in \Delta(I,l)}\sign(J) e_J\right)
=\!\!\!\!\sum_{J\in \Delta(I)^+}\!\!\!\!\left( t^{d(I,J)}\sign(J)e_J\right)
\end{align*}
where $\sign(J)=\pm 1$. Note that $\sign(J)$ depends on $J$ but not on $I$, hence we may replace $e_J$ by $\sign(J)e_J$, and write 
\begin{align*}
e_{I}^{t}=\sum_{J\in \Delta(I)^+}\!\!\!\!t^{d(I,J)}e_J.
\end{align*}
Therefore, we have
\begin{align*}
T_t=&\Big< e_I \: | \: d(I,I_1)\leq k_1;\:
\sum_{J\in \Delta(I)^+}\!\!\!\!\left( t^{d(I,J)}e_J\right) \: | \: d(I,I_1)\leq k_2
\Big >.
\end{align*}
Next, we define $$ \Delta:=\left\{ I \: | \:  d(I,I_1)\leq k_1\right\}\bigcup
\left(\bigcup_{d(I,I_1)\leq k_2} \!\!\!\!\Delta(I)^+\right)\subset \Lambda.$$
Let $I\in \Lambda$ be an index with $d(I,I_1)> k_1+k_2+1$. If $I\notin \Delta$ then $T_t\subset \{p_I=0\}$ for any $t\neq 0$ and we are done.

Now, assume that $I\in \Delta$. For any $e_K^t$ with non-zero Pl\"ucker coordinate $p_I$ we have $I\in \Delta(K)^+$, that is $K\in \Delta(I)^-$. Now, we want to find a hyperplane $H_I$ of type
\begin{align}\label{hyperplane}
F_I=\sum_{J\in \Delta(I)^-}t^{d(I,J)}c_J p_J =0
\end{align}
where $c_J\in\C$ with $c_I\neq 0$, and such that $T_t\subset H_I$ for $t\neq 0$. Note that then we can divide the equation by $c_I$, and get a hyperplane $H_I$ of the required type:
$$p_I+\frac{t}{c_I}\left(\sum_{J\in \Delta(I)^-, \ J\neq I}t^{d(J,I)-1} c_J p_J\right)=0$$
In the following we will write $s_I^-=s$ for short. Since 
$$\left|\Delta(I)^-\right|
=\sum_{l=0}^{s}\left|\Delta(I,-l)\right|
=1+s+\binom{s}{2}+\dots+\binom{s}{s -1}+1=2^{s}$$
in equation (\ref{hyperplane}) there are $2^s$ variables $c_J$. Now, we want to understand what conditions we get by requiring $T_t\subseteq \{F_I=0\}$ for $t\neq 0$.

Given $K\in \Delta(I)^-$ we have $s_K^+\geq d(I,K)$ and
\begin{align*}
F_I(e_K^t)&=
F\left(\sum_{L\in \Delta(K)^+}\!\!\!\!\left( t^{d(K,L)}e_L\right)\right)
=F\left(\sum_{l=0}^{s_K^+}\left(t^l\!\!\!\!\sum_{L\in \Delta(K,l)} e_L\right)\right)
=F\left(\sum_{l=0}^{d(I,K)}\left(t^l\!\!\!\!\sum_{L\in \Delta(K,l)} e_L\right)\right)\\
&\stackrel{(\ref{hyperplane})}{=}\sum_{J\in \Delta(I)^-\cap \Delta(K)^+}t^{d(I,K)-d(J,K)}c_J 
\left(  t^{d(J,K)}\right)
=t^{d(I,K)}\left[\sum_{J\in \Delta(I)^-\cap \Delta(K)^+}c_J \right]
\end{align*}
that is 
\begin{align*}
F_I(e_K^t)=0\ \forall t\neq 0 \Leftrightarrow \sum_{J\in \Delta(I)^-\cap \Delta(K)^+}\!\!\!\!\!\!c_J =0.
\end{align*}
Note that this is a linear condition on the coefficients $c_J$, with $J\in \Delta(I)^-$. Therefore,
\begin{align}\label{eqns3}
T_t\subset \{F_I=0\} \mbox{ for } t\neq 0 &\Leftrightarrow 
\begin{cases}
F_I(e_L)=0\   &\forall L\in \Delta(I)^-\cap B[I_1,k_1]  \\ 
F_I(e_K^t)=0 \ \forall t\neq 0 \  &\forall K\in \Delta(I)^-\cap B[I_1,k_2] 
\end{cases}\\&\nonumber
\Leftrightarrow 
\begin{cases}
c_L=0  &\forall L\in \Delta(I)^-\cap B[I_1,k_1]\\
\displaystyle\sum_{J\in \Delta(I)^-\cap \Delta(K)^+}\!\!\!\!\!\!\!\!\!\!\!\!c_J \quad
=0 \ &\forall K\in \Delta(I)^-\cap B[I_1,k_2]
\end{cases}
\end{align}
where $B[J,u]:=\{K\in \Lambda |\: d(J,K)\leq u\}$. The number of conditions on the $c_J$'s, $J\in \Delta(I)^-$ is then
$$c:=\left|\Delta(I)^-\cap B[I_1,k_1]\right|+\left|\Delta(I)^-\cap B[I_1,k_2]\right|.$$

The problem is now reduced to find a solution of the linear system given by the $c$ equations (\ref{eqns3}) in the $2^s$ variables $c_J$'s, $J\in \Delta(I)^-$ such that $c_I\neq 0$. Therefore, it is enough to find $s+1$ complex numbers $c_I=c_0\neq 0,c_1,\dots, c_{s}$ satisfying the following conditions 
\begin{align}\label{eqns4}
\begin{cases}
c_{j}=0  &\forall j=s,\dots,d-k_1\\
\displaystyle\sum_{l=0}^{d(I,K)}\left|\Delta(I)^-\cap \Delta(K,l)\right|c_{d(I,K)-l} =0 
\ &\forall K\in \Delta(I)^-\cap B[I_1,k_2]
\end{cases}
\end{align}
where $d=d(I,I_1)>k_1+k_2+1$. Note that (\ref{eqns4}) can be written as
\begin{align*}
\begin{cases}
c_{j}=0  &\forall j=s,\dots,d-k_1\\
\displaystyle\sum_{k=0}^{j}\binom{j}{j-k}c_{k} =0 
\ &\forall j=s,\dots,d-k_2
\end{cases}
\end{align*}
that is
\begin{align}\label{eqns5}
\begin{cases}
c_{s}=0\\
\vdots\\
c_{d-k_1}=0\\
\end{cases}
\begin{cases}
\binom{s}{0}c_{s}+\binom{s}{1}c_{s-1}+\cdots+\binom{s}{s-1}c_{1}+\binom{s}{s}c_{0}=0\\
\vdots\\
\binom{d-k_2}{0}c_{d-k_2}+\binom{d-k_2}{1}c_{d-k_2-1}+\cdots+\binom{d-k_2}{d-k_2-1}c_{1}+\binom{d-k_2}{d-k_2}c_{0}=0
\end{cases}
\end{align}
Now, it is enough to show that the linear system (\ref{eqns5}) admits a solution with $c_0\neq 0$. If $s<d-k_2,$ the system (\ref{eqns5}) reduces to $c_s=\dots=c_{d-k_1}=0$. In this case we may take $c_0=1, c_1=\dots,c_s=0$. Note that $d-k_1>k_2+1\geq 1$ and we can use the hyperplane $p_I=0$.

From now on assume that $s\geq d-k_2$. Since $c_s=\dots=c_{d-k_1}=0$ we may consider the second set of conditions in (\ref{eqns5}) and translate the problem into checking that the system (\ref{eqns6}) admits a solution involving the variables $c_0,c_1,\dots,c_{d-k_1+1}$ with $c_0\neq 0$. Note that (\ref{eqns5}) takes the following form:
\begin{align}\label{eqns6}
\begin{cases}
\binom{s}{s-(d-k_1+1)}c_{d-k_1+1}+\binom{s}{s-(d-k_1)}c_{d-k_1}+
\cdots+\binom{s}{s-1}c_{1}+\binom{s}{s}c_{0}=0\\
\vdots\\
\binom{d-k_2}{k_1-1-k_2}c_{d-k_1+1}+\binom{d-k_2}{k_1-k_2}c_{d-k_1}+
\cdots+\binom{d-k_2}{d-k_2-1}c_{1}+\binom{d-k_2}{d-k_2}c_{0}=0
\end{cases}
\end{align}
Therefore, it is enough to check that the $(s-d+k_2+1)\times (d-k_1+1)$ matrix
\begin{align}\label{eqns7}
M=
\begin{pmatrix}
\binom{s}{s-(d-k_1+1)}&\binom{s}{s-(d-k_1)}&
\cdots&\binom{s}{s-1}\\
\vdots&\vdots& &\vdots\\
\binom{d-k_2}{k_1-1-k_2}&\binom{d-k_2}{k_1-k_2}&
\cdots&\binom{d-k_2}{d-k_2-1}
\end{pmatrix}
\end{align}
has maximal rank. Note that $s\leq d$ and $d>k_1+k_2+1$ yield $s-d+k_2+1< d-k_1+1$. Then it is enough to show that the $(s-d+k_2+1)\times (s-d+k_2+1)$ submatrix
\begin{align*}
M'=&
\begin{pmatrix}
\binom{s}{s-(s-d+k_2+1)}&\binom{s}{s-(s-d+k_2)}&
\cdots&\binom{s}{s-1}\\
\vdots&\vdots& &\vdots\\
\binom{d-k_2}{d-k_2-(s-d+k_2+1)}&\binom{d-k_2}{d-k_2-(s-d+k_2)}&
\cdots&\binom{d-k_2}{d-k_2-1}
\end{pmatrix}\\
=&
\begin{pmatrix}
\binom{s}{d-k_2-1}&\binom{s}{d-k_2}&
\cdots&\binom{s}{s-1}\\
\vdots&\vdots& &\vdots\\
\binom{d-k_2}{2d-2k_2-s-1}&\binom{d-k_2}{2d-2k_2-s}&
\cdots&\binom{d-k_2}{d-k_2-1}
\end{pmatrix}=
\begin{pmatrix}
\binom{s}{s+1-d+k_2}&\binom{s}{s-d+k_2}&
\cdots&\binom{s}{1}\\
\vdots&\vdots& &\vdots\\
\binom{d-k_2}{s+1-d+k_2}&\binom{d-k_2}{s-d+k_2}&
\cdots&\binom{d-k_2}{1}
\end{pmatrix}
\end{align*}
has non-zero determinant. Since the determinant of $M'$ is equal to the determinant of the matrix of binomial coefficients 
$$M'':=\left(\binom{i}{j}\right)_{\substack{ \hspace{-0.7cm}d-k_2\leq i\leq s\\ 1\leq j\leq s+1-d+k_2}}.$$
it is enough to observe that since $d-k_2>k_1+1\geq 1$ by \cite[Corollary 2]{GV85} we have $\det(M')=\det(M'')\neq 0$.
\end{proof}
In the following example we work out explicitly the proof of Proposition \ref{limitosculatingspacesgrass}.

\begin{Example}\label{exampledeg}
Consider the case $(r,n)=(2,5).$ Then $I_1=\{0,1,2\}, I_2=\{3,4,5\}$. Let us take 
$$I^1=\{0,1,2\},I^2=\{0,1,3\},I^3=\{0,4,5\}.$$
Then we have 
$$
\begin{array}{ll}
\Delta(I^1,1)=\{\{1,2,3\},\{0,2,4\},\{0,1,5\}\} & \Delta(I^3,1)=\{\{3,4,5\}\}\\
\Delta(I^1,2)=\{\{0,4,5\},\{1,3,5\},\{2,3,4\}\} & \Delta(I^3,-1)=\{\{0,1,5\},\{0,2,4\}\}\\
\Delta(I^1,3)=\{\{3,4,5\}\} &\Delta(I^3,-2)=\{\{0,1,2\}\} \\
\Delta(I^2,1)=\{\{0,3,4\}\} &
\end{array}
$$
and $\Delta(I^j,l)=\emptyset$ for any other pair $(j,l)$ with $1\leq j\leq 3$ and $l\neq 0$. Therefore 
$$s^+_{I^1}=3,s^-_{I^1}=0,s^+_{I^2}=1,s^-_{I^2}=0,s^+_{I^3}=1,s^-_{I^3}=2$$
while
$$d(I^1,I_1)=0,d(I^2,I_1)=1,d(I^3,I_1)=2.$$
Let us work out the case $k_1=0,k_2=1$. Here $T_p^{k_1}$ is just the point $e_{012}$ and the generators of $T^{k_2}_{\gamma(t)}$ are
$$e_{012}^t,e_{123}^t,e_{024}^t,e_{015}^t,
e_{124}^t,e_{125}^t,
e_{023}^t,e_{025}^t,
e_{013}^t,e_{014}^t
$$
We can write them on the basis $(e_I)_{I\in \Lambda}$ as
\begin{align}\label{etonthebasisexampleI}
\begin{cases}
e_{012}^t&=e_{012}+t(e_{123}+e_{024}+e_{015})+t^2(e_{045}+e_{135}+e_{234})+t^3e_{345}\\
e_{123}^t&=e_{123}+t(e_{135}+e_{234})+t^2e_{345}\\
e_{024}^t&=e_{024}+t(e_{045}+e_{234})+t^2e_{345}\\
e_{015}^t&=e_{015}+t(e_{045}+e_{135})+t^2e_{345}\\
\end{cases}
\end{align}
and
\begin{align}\label{etonthebasisexampleII}
\begin{cases}
e_{124}^t&=e_{124}+te_{145}\\
e_{125}^t&=e_{125}+te_{245}\\
e_{023}^t&=e_{023}+te_{035}\\ 
e_{025}^t&=e_{025}+te_{235}\\
e_{013}^t&=e_{013}+te_{034}\\ 
e_{014}^t&=e_{014}+te_{134}
\end{cases}
\end{align}
Now, given $I\in \Lambda$ with $d(I,I_1)>2=k_1+k_2+1$ we have to find a hyperplane $H_I$ of type
$$c_Ip_I+t \!\!\!\!\sum_{J\in \Delta(I,-1)}\!\!\!\! c_J p_J+
t^2 \!\!\!\!\sum_{J\in \Delta(I,-2)}\!\!\!\! c_J p_J+
t^3 \!\!\!\!\sum_{J\in \Delta(I,-3)}\!\!\!\! c_J p_J=0$$
such that $c_I\neq 0$, and $T_t\subseteq H_I$ for every $t\neq 0.$

In this case it is enough to consider $I=\{3,4,5\}$. Note that $e_{345}$ appears in (\ref{etonthebasisexampleI}) but does not in (\ref{etonthebasisexampleII}). In the notation of the proof of Proposition \ref{limitosculatingspacesgrass} we have $d=s=3, d-k_1=3,d-k_2=2$, and we are looking for
$$c_0=c_{345}\neq 0, c_1=c_{045}=c_{135}=c_{234}, c_2=c_{123}=c_{024}=c_{015},c_3=c_{012}$$
satisfying the following system:
\begin{align}\label{eqnsexample}
\begin{cases}
c_{3}=0\\
\binom{3}{0}c_{3}+\binom{3}{1}c_{2}+\binom{3}{2}c_{1}+\binom{3}{3}c_{0}=0\\
\binom{2}{0}c_{2}+\binom{2}{1}c_{1}+\binom{2}{2}c_{0}=0
\end{cases}
\end{align}
Note that the matrix
\begin{equation*}
M=
\begin{pmatrix}
\binom{3}{1}&\binom{3}{2}\\
\binom{2}{0}&\binom{2}{1}
\end{pmatrix}=
\begin{pmatrix}
3&3\\
1&2
\end{pmatrix}
\end{equation*}
has maximal rank. Therefore, there exist complex numbers $c_I=c_0\neq 0,c_1,c_2,c_3$ satisfying system (\ref{eqnsexample}). For instance, we may take $c_0=3,c_1=-2,c_2=1,c_3=0$ corresponding to the hyperplane  
$$3p_{345}-2t(p_{045}+p_{135}+p_{234})+t^2(p_{123}+p_{024}+p_{015})=0$$
and taking the limit for $t\mapsto 0$ we get the equation $p_{345}=0$.
\end{Example}

Essentially, Proposition \ref{limitosculatingspacesgrass} says that two general osculating spaces of $\G(r,n)$ behave well under degenerations. We formalize this concept as follows.

\begin{Assumption}\label{ass}
Let $X\subset \P^N$ be an irreducible projective variety, $p,q\in X$ be general points, and $k_1,k_2\geq 0$ integers. We will assume that there exists a smooth curve $\gamma:C\to X$, with $\gamma(t_0)=p$ and $\gamma(t_\infty)=q$ such that the flat limit $T_{t_0}$ in $\G(dim(T_t),N)$ of the family of liner spaces 
$$T_t=\left\langle T^{k_1}_p,T^{k_2}_{\gamma(t)}\right\rangle,\: t\in C\backslash \{t_0\}$$
parametrized by $C\backslash \{t_0\}$, is contained in $T^{k_1+k_2+1}_p$.
\end{Assumption}

For our applications to Grassmannians we will always choose $C\cong \mathbb{P}^1$. Moreover, we would like to stress that there exist varieties, with small higher order osculating spaces, not satisfying Assumption \ref{ass}. 

\begin{Example}
Let us consider the tangent developable $Y_n\subseteq\mathbb{P}^n$ of a degree $n$ rational normal curve $C_n\subseteq\mathbb{P}^n$ as in Proposition \ref{tdrnc}.

Note that two general points $p = \phi(t_1,u_1)$, $q = \phi(t_2,u_2)$ in $Y_n$ can be joined by a smooth rational curve. Indeed, we may consider the curve 
$$\xi(t) = (t_1+t(t_2-t_1)+u_1+t(u_2-u_1),\dots, (t_1+t(t_2-t_1))^n+n(t_1+t(t_2-t_1))^{n-1}(u_1+t(u_2-u_1)))$$ 
Now, let $\gamma:C\rightarrow Y_n$ be a smooth curve with $\gamma(t_0)=p$ and $\gamma(t_\infty)=q$, and let $T_{t_0}$ be the flat limit of the family of liner spaces 
$$T_t=\left\langle T_{p},T_{\gamma(t)}\right\rangle,\: t\in C\backslash \{t_0\}.$$
Now, one can prove that if $n\geq 5$ then $T_{p}Y_n\cap T_{q}Y_n = \emptyset$ by a straightforward computation, or alternatively by noticing that by \cite{Ba05} $Y_n$ is not $2$-secant defective, and then by Terracini's lemma \cite[Theorem 1.3.1]{Ru03} $T_{p}Y_n\cap T_{q}Y_n = \emptyset$. Now, $T_{p}Y_n\cap T_{q}Y_n = \emptyset$ implies that $\dim(T_t) = 5$ for any $t\in C$. On the other hand, by Proposition \ref{tdrnc} we have $\dim(T^3_pY_n) = 4$. Hence, $T_{t_0}\nsubseteq T^3_pY_n$ as soon as $n\geq 5$.
\end{Example}

Now we are ready to prove a stronger version of Proposition \ref{limitosculatingspacesgrass}.

\begin{Proposition}\label{limitosculatingspacesgrassII}
Let $p_1,\dots,p_{\alpha}\in \G(r,n)\subseteq\mathbb{P}^{N}$ be general points with $\alpha = \lfloor\frac{n+1}{r+1}\rfloor$,
$k\leq (r-1)/2$ a non-negative integer, and $\gamma_j:\P^1\to \G(r,n)$
a degree $r+1$ rational normal curve with $\gamma_j(0)=p_1$ and $\gamma_j(\infty)=p_j,$ for every $j=2,\dots,\alpha.$
Let us consider the family of linear spaces 
$$T_t=\left\langle T^{k}_{p_1},T^{k}_{\gamma_2(t)},\dots,T^{k}_{\gamma_{\alpha}(t)}
\right\rangle,\: t\in \P^1\backslash \{0\}$$
parametrized by $\P^1\backslash\{0\}$, and let $T_0$ be the flat limit of $\{T_t\}_{t\in \P^1\backslash \{0\}}$ in
$\G(\dim(T_t),N)$. Then $T_0\subset T^{2k+1}_p.$
\end{Proposition}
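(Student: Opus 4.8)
The plan is to adapt, essentially verbatim, the strategy of the proof of Proposition \ref{limitosculatingspacesgrass}, but now bookkeeping the $\alpha-1$ moving osculating spaces simultaneously. First I would normalise: using the transitivity of $\Stab(\G(r,n))$ I may assume $p_i=e_{I_i}$ with $I_1,\dots,I_\alpha$ as in (\ref{I1Ialpha}) and that each $\gamma_j$ is the standard degree $r+1$ rational normal curve $[s:t]\mapsto(se_0+te_{(r+1)(j-1)})\wedge\dots\wedge(se_r+te_{(r+1)j-1})$, which lives in the coordinate subspace spanned by the $e_i$, $i\in I_1\cup I_j$. On the chart $s=1$, Proposition \ref{oscgrass} gives explicit generators of $T_t$: the points $e_K$ with $d(I_1,K)\le k$ coming from $T^k_{p_1}$, and, after absorbing the signs into the Plücker basis as in the previous proof,
$$e^{t,j}_K=\sum_{S\subseteq K\cap I_1} t^{|S|}\,e_{(K\setminus S)\cup(S+(r+1)(j-1))},\qquad d(I_1,K)\le k,\ j=2,\dots,\alpha,$$
coming from $T^k_{\gamma_j(t)}$. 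Since $T^{2k+1}_{p_1}=\{p_I=0\mid d(I,I_1)>2k+1\}$, it is enough to exhibit, for every index $I$ with $d(I,I_1)>2k+1$, a family of hyperplanes $H_t=\{F_I=0\}$ with $F_I=p_I+t(\cdots)$ (so that $\lim_{t\to0}H_t=\{p_I=0\}$) and with $T_t\subseteq H_t$ for $t\neq0$.

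The combinatorial point that makes the multi-curve case reduce to the two-point case is the following claim: if $d(I,I_1)>2k+1$, then $p_I$ occurs with nonzero coefficient in the expansion of generators of $T_t$ coming from \emph{at most one} of the curves $\gamma_2,\dots,\gamma_\alpha$, and never in a generator $e_K$ of $T^k_{p_1}$. Indeed $p_I$ can appear in $e^{t,j}_K$ only if $I\setminus(I_1\cup I_j)\subseteq K\setminus I_1$, hence $|I\setminus(I_1\cup I_j)|\le k$; if $p_I$ appeared over two curves $\gamma_j,\gamma_{j'}$ one would get $|I\setminus I_1|=|I\cap I_j|+|I\setminus(I_1\cup I_j)|\le k+k$, contradicting $d(I,I_1)=|I\setminus I_1|>2k+1$; and $p_I$ cannot occur in $e_K$ since $d(K,I_1)\le k<d(I,I_1)$. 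This is exactly where the hypothesis $2k\le r-1$ is used.

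Fix now $I$ with $d(I,I_1)>2k+1$. If $p_I$ occurs in no generator, take $F_I=p_I$. Otherwise let $\gamma_j$ be the unique curve over which $p_I$ appears and look, exactly as in Proposition \ref{limitosculatingspacesgrass} applied to the pair $(p_1,p_j)$ with $k_1=k_2=k$ (legitimate since $k_1+k_2=2k\le r-1$), for $F_I=\sum_{L\in\Delta(I)^-}t^{d(I,L)}c_Lp_L$, the set $\Delta(I)^-$ being formed relative to the blocks $(I_1,I_j)$ and $c_L$ depending only on $d(I,L)$. The conditions $T_t\subseteq H_t$ then split in two. Those coming from $T^k_{p_1}$ and $T^k_{\gamma_j(t)}$ are identical to the ones solved in the proof of Proposition \ref{limitosculatingspacesgrass}: they force $c_l=0$ in an initial range and reduce the rest to the invertibility of a matrix of binomial coefficients, granted by \cite[Corollary 2]{GV85}. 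Those coming from the other curves $\gamma_{j'}$, $j'\neq j$, are governed by the claim (applied to all indices that can appear in $e^{t,j'}_K$) together with the remark that for $L\in\Delta(I)^-$ occurring in $e^{t,j'}_K$ the distance $d(I,L)$ is independent of the summation index $S$; each such condition therefore imposes only one extra vanishing $c_{d_0}=0$ with $2\le d_0\le s_I^-$, i.e. merely enlarges the initial range of forced zeros. As $d_0\ge2$, the coefficient $c_I=c_0$ is never forced to vanish, and intersecting $\{F_I=0\}$ over all $I$ and letting $t\to0$ gives $T_0\subseteq T^{2k+1}_{p_1}$.

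The main obstacle is precisely the last solvability assertion: one must check that, after adjoining to the two-point linear system all the extra conditions $c_{d_0}=0$ produced by the $\alpha-2$ curves other than $\gamma_j$, the residual binomial system still possesses a solution with $c_0\neq0$ — equivalently, that deleting the corresponding columns from the relevant matrix of binomial coefficients leaves a submatrix of maximal rank, which should again follow from the Lindström–Gessel–Viennot determinant formula as in Proposition \ref{limitosculatingspacesgrass}. Everything else is a (lengthy but routine) extension of the bookkeeping carried out there; some care is also needed in the normalisation of the curves $\gamma_j$ and in the fact that the sign conventions used to trivialise the Plücker basis differ from block to block.
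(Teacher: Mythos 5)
Your reduction to a single curve $\gamma_j$ --- the claim that an index $I$ with $d(I,I_1)>2k+1$ can occur in the expansions of generators coming from at most one of the curves $\gamma_2,\dots,\gamma_\alpha$ --- is exactly the first key step of the paper's proof, and your argument for it is correct. The gap is in what you yourself flag as ``the main obstacle'': you take the ansatz $F_I=\sum_{L\in\Delta(I)^-}t^{d(I,L)}c_Lp_L$ over the full set $\Delta(I)^-$, so the remaining curves $\gamma_{j'}$, $j'\neq j$, do impose extra vanishings $c_{d_0}=0$ for interior values of $d_0$, and you do not prove that the pruned binomial system still admits a solution with $c_0\neq 0$. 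This is not a routine rerun of the two-point case: deleting columns from the matrix (\ref{eqns7}) changes which minor has to be shown nonzero, and one would have to determine which $d_0$ actually occur and check that the Gessel--Viennot criterion still applies to the resulting, possibly non-contiguous, column selection. As written, the proof is incomplete at precisely this point.

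The paper sidesteps the issue with a device your proposal is missing: it truncates the support of $F_I$ to the subset $\Gamma=\bigcup_{0\le l\le k+1-D+s(I)^-_j}\Delta(I,-l)_j$, where $D=d(I,I_1)$, i.e.\ it only allows coefficients $c_l$ with $l\le k+1-D+s(I)^-_j$. It then proves (claim (\ref{eq2})) that no index of $\Gamma$ appears in the expansion of any generator coming from a curve $\gamma_{j'}$ with $j'\neq j$: for $J\in\Gamma$ one has $|J\cap I_j|\ge s(I)^-_j-l\ge D-(k+1)>k$, while $J\in\Delta(K)^+_{j'}$ with $d(K,I_1)\le k$ would force $|J\cap I_j|=|K\cap I_j|\le k$. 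Hence the other curves impose no conditions whatsoever, and the linear system to be solved is literally (\ref{linsys}), identical in form to the one already handled in Proposition \ref{limitosculatingspacesgrass}, so no new determinant computation is needed. To complete your version you should either adopt this truncation or actually carry out the rank analysis for your enlarged system.
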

\begin{proof}
If $\alpha=2$ it follows from the Proposition \ref{limitosculatingspacesgrass}. Therefore, we may assume that $\alpha\geq 3$, $p_j=e_{I_j}$ (\ref{I1Ialpha}) and that $\gamma_j:\P^1\to\mathbb{P}^{N}$ is the rational curve given by 
$$\gamma_j([t:s])=\left(se_0+te_{(r+1)(j-1)}\right)\wedge\dots \wedge \left(se_{r}+te_{(r+1)j-1}\right).$$
We can work on the affine chart $s=1$ and set $t=(t:1)$. Consider the points 
$$e_0,\dots,e_{n},
e_0^{j,t}=e_0+te_{(r+1)(j-1)},
\dots,e_{r}^{j,t}=e_{r}+te_{(r+1)j-1},e_{r+1}^{j,t}=e_{r+1},\dots,e_{n}^{j,t}=e_{n}\in \P^{n}$$
and the corresponding points in $\mathbb{P}^{N}$ 
$$e_I=e_{i_0}\wedge\dots\wedge e_{i_{r}},e_I^{j,t}=e_{i_0}^{j,t}\wedge\dots\wedge e_{i_{r}}^{j,t},\:
I=\{i_0,\dots,i_r\}\in \Lambda,$$
for $j=2,\dots,\alpha.$
By Proposition \ref{oscgrass} we have 
$$T_t=\left\langle e_I\: | \: d(I,I_1)\leq k; \: e_I^{j,t}\: |\: d(I,I_1)\leq k, j=2,\dots,\alpha\right\rangle
,\: t\neq 0$$
and
$$T^{2k+1}_{p_0}=\left\langle e_I\: | \: d(I,I_1)\leq 2k+1\right\rangle
=\{p_I=0\: | \: d(I,I_1)> 2k+1 \}.$$
Therefore, as in Proposition \ref{limitosculatingspacesgrass}, in order to prove that $T_0\subset T^{2k+1}_p$ it is enough to exhibit, for any index $I\in \Lambda$ with $d(I,I_1)> 2k+1$, a hyperplane $H_I\subset\mathbb{P}^{N}$ of type
$$p_I+t\left( \sum_{J\in \Lambda, \ J\neq I}f(t)_{I,J} p_J\right)=0$$
such that $T_t\subset H_I$ for $t\neq 0$, where $f(t)_{I,J}\in \C[t]$ are polynomials. The first part of the proof goes as in the proof of Proposition \ref{limitosculatingspacesgrass}. Given $I\in \Lambda$ we define 
$$\Delta(I,l)_j:=\left\{(I\setminus J)\cup(J+(j-1)(r+1))| J\subset I\cap I_1, |J|=l\right\}\subset \Lambda$$
for any $I\in \Lambda,l\geq 0, j=2,\dots, \alpha,$ where $L+\lambda:=\{i+\lambda; i\in L\}$ is the 
translation of the set $L$ by the integer $\lambda.$ Note that $\Delta(I,0)_j=\{I\}$ and $\Delta(I,l)_j=\emptyset$ for $l$ big enough. For any $l>0$ set
$$\Delta(I,-l)_j:=\left\{ J|\: I\in\Delta(J,l)_j \right\} \subset \Lambda;$$
$$s(I)^+_j:=\max_{l\geq 0}\{\Delta(I,l)_j\neq \emptyset\}\in\{0,\dots,r+1\};$$
$$s(I)^-_j:=\max_{l\geq 0}\{\Delta(I,-l)_j\neq \emptyset\}\in\{0,\dots,r+1\};$$
$$\Delta(I)^+_j:=\bigcup_{0\leq l} \Delta(I,l)_j=
\!\!\!\!\bigcup_{0\leq l \leq s(I)^+_j} \!\!\!\! \Delta(I,l)_j;$$
$$\Delta(I)^-_j:=\bigcup_{0\leq l} \Delta(I,-l)_j=
\!\!\!\!\bigcup_{0\leq l \leq s(I)^-_j} \!\!\!\!\Delta(I,-l)_j.$$
Note that $0\leq s(I)^-_j\leq d(I,I_1),0\leq s(I)^+_j\leq r+1-d(I,I_1)$, and for any $l$ we have
$$J\in\Delta(I,l)_j\Rightarrow d(J,I)=|l|, d(J,I_1)=d(I,I_1)+l,d(J,I_j)=d(I,I_j)-l.$$
 Now, we write $e_I^{j,t}$, $d(I,I_1)< k$, in the basis $e_J,J\in\Lambda$. For any $I\in\Lambda$ we have
\begin{align*}
e_{I}^{j,t}=\!\!\!\!\sum_{J\in \Delta(I)^+_j}\!\!\!\!\left( t^{d(I,J)}\sign(J)e_J\right)
\end{align*}
where $\sign(J)=\pm 1$. Since $\sign(J)$ does depend on $J$ but not on $I$ we can replace $e_J$ by $\sign(J)e_J$. Then, we may write 
\begin{align*}
e_{I}^{t}=\sum_{J\in \Delta(I)^+_j}\!\!\!\!\left( t^{d(I,J)}e_J\right).
\end{align*}
and
\begin{align*}
T_t=&\left\langle e_I \: | \: d(I,I_1)\leq k;\:
\sum_{J\in \Delta(I)^+_j}\!\!\!\!\left( t^{d(I,J)}e_J\right) \: \big| \: d(I,I_1)\leq k,\: 2\leq j\leq \alpha\right \rangle.
\end{align*}
Next, we define 
$$ \Delta:=\left\{ I \: | \:  d(I,I_1)\leq k\right\}\bigcup
\left(\bigcup_{2\leq j\leq \alpha}\bigcup_{d(I,I_1)\leq k} \!\!\!\!\Delta(I)^+_j\right)\subset \Lambda.$$
Let $I\in \Lambda$ be an index with $d(I,I_1)=:D> 2k+1$. If $I\notin \Delta$ then $T_t\subset \{p_I=0\}$ for any $t\neq 0$ and we are done. Now, assume that $I\in \Delta$, and $I\in \Delta(K_1,l_1)_2 \bigcap \Delta(K_2,l_2)_3$ with 
$$d(K_1,I_1),d(K_2,I_1)\leq k.$$
Consider the following sets
\begin{align*}
I^0:&=I\cap I_1\\
I^1:&=I\cap(K_1+(r+1))\subset I_2\\
I^2:&=I\cap(K_2+2(r+1))\subset I_3\\
I^3:&=I\setminus (I^0\cup I^1 \cup I^2)
\end{align*}
Then $|I^1|=l_1,|I^2|=l_2$. Set $u:=|I^3|$, then
$$d(I,I_1)=l_1+l_2+u\leq l_1+l_2+2u= d(K_1,I_1)+d(K_2,I_1)\leq 2k$$
contradicting $d(I,I_1)>2k+1$. Therefore, there is a unique $j$ such that 
$$I\in \bigcup_{d(J,I_1)\leq k} \!\!\!\!\Delta(J)^+_j.$$
Note that $\Delta(I,-s(I)^-_j)$ has only one element, say $I'$. Then 
$$k+1-D+s(I)_j^-=k+1-d(I,I_1)+d(I,I')=k+1-d(I_1,I')>0.$$
Now, consider the set of indexes
$$\Gamma:=\left\{ I \right\} \cup \Delta(I,-1)_j \cup \dots \cup \Delta(I,-(k+1-D+s(I)^-_j))_j
=\!\!\!\!\!\!\!\!\bigcup_{0\leq l \leq k+1-D+s(I)^-_j} \!\!\!\!\!\!\!\!\Delta(I,-l)_j \subset \Lambda$$
Our aim now is to find a hyperplane of the form
\begin{equation}\label{eq1}
H_I= \left \{\sum_{J\in \Gamma } t^{d(I,J)}c_J p_J=0 \right\}
\end{equation}
such that $T_t\subset H_I$ and $c_I\neq 0$.

First, we claim that 
\begin{equation}\label{eq2}
J\in \Gamma\Rightarrow J \notin  \bigcup_{\substack{2\leq i\leq \alpha \\ i\neq j} }
\bigcup_{d(I,I_1)\leq k} \!\!\!\!\Delta(I)^+_i.
\end{equation}
Indeed, assume that $J\in \Delta(I,-l)_j\cap\Delta(K,m)_i$ for some $K\in \Lambda$ with 
$$d(K,I_1)\leq k \mbox{ and } i\neq j, 0\leq l \leq k+1-D+s(I)^-_j, m\geq 0.$$
Since $J\in \Delta(I,-l)_j$ then
\begin{equation*}
|J\cap I_j|=|I\cap I_j|-l\geq s(I)_j^- -l\geq D-(k+1)>k
\end{equation*}
On other hand, since $J\in \Delta(K,m)_i$ with $j\neq i$ we have
\begin{equation*}
|J\cap I_j|=|K\cap I_j|\leq d(K,I_1)\leq k.
\end{equation*}
A contradiction. Now, (\ref{eq2}) yields that the hyperplane $H_I$ given by (\ref{eq1}) is such that 
$$\left\langle e_I\: | \: d(I,I_1)\leq k; \: \sum_{J\in \Delta(I)^+_i}\!\!\!\!
t^{d(I,J)} c_{(I,J)}e_J\: | \: d(I,I_1)\leq k,\ i=2,\dots,\alpha, i\neq j \right\rangle \subset H_I , t\neq 0.$$
Therefore
$$
T_t\subset H_I, t \neq 0 \Longleftrightarrow \left\langle \sum_{J\in \Delta(I)^+_j}\!\!\!\!
t^{d(I,J)} e_J\: | \: d(I,I_1)\leq k \right\rangle \subset H_I , t\neq 0
.$$
Now, arguing as in the proof of Proposition \ref{limitosculatingspacesgrass} we obtain
\begin{equation}\label{eq4II}
T_t\subset H_I, t \neq 0 \Longleftrightarrow \sum_{J\in \Delta(K)^+_j \cap \Gamma}\!\!\!\! c_{J}=0
\ \ \forall  K\in \Delta(I)^-_j\cap B[I_1,k]
\end{equation}
and the problem is now reduced to find a solution of the linear system given by the 
$|\Delta(I)^-_j\cap B[I_1,k]|$ equations (\ref{eq4II}) in the $|\Delta(K)^+_j \cap \Gamma|$ variables $c_J$, $J\in\Delta(K)^+_j \cap \Gamma$, such that $c_I\neq 0$. We set $c_J=c_{d(I,J)}$ and, as in the proof of Proposition \ref{limitosculatingspacesgrass}, we consider the linear system 
\begin{equation}\label{linsys}
\displaystyle\sum_{l=0}^{k+1-D+s(I)^-_j}\binom{D-i}{D-l-i}c_{l} =0 
\quad \forall i=D-s(I)^-_j,\dots,k
\end{equation}
with $k+2-D+s(I)^-_j$ variables $c_0,\dots,c_{k+1-D+s(I)^-_j}$ and $k+1-D+s(I)^-_j$ equations, where $D=d(I,I_1)$. Finally, arguing exactly as in the last part of the proof of Proposition \ref{limitosculatingspacesgrass} we have that (\ref{linsys}) admits a solution with $c_0\neq 0$.
\end{proof}

We conclude this section with the definition of \textit{$m$-osculating regularity} which essentially will be a measure of how many general osculating spaces of order $k$ we can degenerate to an osculating space of order $2k+1$.

\begin{Definition}\label{moscularity}
Let $X\subset\mathbb{P}^N$ be an irreducible projective variety. We say that $X$ has \textit{$m$-osculating regularity} if given $p_1,\dots,p_{m}\in X$ general points, and an integer $k\geq 0$, there exist smooth curves $\gamma_j:C\to X$
with $\gamma_j(t_0)=p_1$ and $\gamma_j(t_\infty)=p_j$ for $j=2,\dots,m$ such that the family of linear spaces 
$$T_t=\left\langle T^{k}_{p_1},T^{k}_{\gamma_2(t)},\dots,T^{k}_{\gamma_{m}(t)}\right\rangle,\: t\in C\backslash \{t_0\}$$
parametrized by $C\backslash\{t_0\}$ has flat limit $T_{t_0}$ contained in $T^{2k+1}_p$.
\end{Definition}

Note that by Proposition \ref{limitosculatingspacesgrassII} the Grassmannian $\G(r,n)$ has $\alpha$-osculating regularity, where $\alpha = \lfloor \frac{n+1}{r+1}\rfloor$.

\subsection{Limit linear systems}\label{lls}
Let $X\subset\mathbb{P}^N$ be an irreducible rational variety of dimension $n$, $p_1,\dots,p_m\in X$ general points. We reinterpret the notion of $m$-osculating regularity in Definition \ref{moscularity} in terms of limit linear systems and collisions of fat points. 

Let $\mathcal{H}\subseteq |\mathcal{O}_{\mathbb{P}^n}(d)|$ be the sublinear system of $|\mathcal{O}_{\mathbb{P}^n}(d)|$ inducing the birational map $i_{\mathcal{H}}:\mathbb{P}^n\dasharrow X\subset\mathbb{P}^N$, and $q_i = i_{\mathcal{H}}^{-1}(p_i)$. 

Then $X$ has $m$-osculating regularity if and only if there exists smooth curves $\gamma_i:C\rightarrow \mathbb{P}^n$, $i = 2,\dots,m$, with $\gamma_i(t_0) = q_1$, $\gamma_i(t_{\infty}) = q_i$ for $i = 1,\dots,m$, such that the limit linear system $\mathcal{H}_{t_0}$ of the family of linear systems $\mathcal{H}_t$ given by the hypersurfaces in $\mathcal{H}$ having at least multiplicity $s+1$ at $q_1,\gamma_2(t),\dots,\gamma_m(t)$ contains the linear system $\mathcal{H}^{2s+2}_{q_1}$ of degree $d$ hypersurfaces with multiplicity at least $2s+2$ at $q_1$.

Indeed, if $p_i = i_{\mathcal{H}}(q_i)$ for $i = 1,\dots,m$ then the linear system of hyperplanes in $\mathbb{P}^N$ containing 
$$T_t = \left\langle T_{p_1}^s,T_{i_{\mathcal{H}}(\gamma_2(t))}^s,\dots, T_{i_{\mathcal{H}}(\gamma_m(t))}^s\right\rangle$$
corresponds to the linear system $\mathcal{H}_t$. Similarly, the linear system of hyperplanes in $\mathbb{P}^N$ containing $T_{p_1}^{2s+1}$ corresponds to the linear system $\mathcal{H}^{2s+2}_{q_1}$.

Therefore, the problem of computing the $m$-osculating regularity of a rational variety can be translated in terms of limit linear systems in $\mathbb{P}^n$ given by colliding a number of fat points. This is a very hard and widely studied subject \cite{CM98}, \cite{CM00}, \cite{CM05}, \cite{Ne09}.

\subsection{Degenerating rational maps}
In order to study the fibers of general tangential projections via osculating projections we need to understand how the fibers of rational maps behave under specialization. We refer to \cite{GD64} for the general theory of rational maps relative to a base scheme.

\begin{Proposition}\label{p1}
Let $C$ be a smooth and irreducible curve, $X\rightarrow C$ an integral scheme flat over $C$, and $\phi:X\dasharrow \mathbb{P}^n_{C}$ be a rational map of schemes over $C$. Let $d_0 = \dim(\overline{\phi_{|X_{t_0}}(X_{t_0})})$ with $t_0\in C$. Then for $t\in C$ general we have $\dim(\overline{\phi_{|X_{t}}(X_{t})})\geq d_0$.

In particular, if there exists $t_0\in C$ such that $\phi_{|X_{t_0}}:X_{t_0}\dasharrow\mathbb{P}^n$ is generically finite, then for a general $t\in C$ the rational map $\phi_{|X_{t}}:X_{t}\dasharrow\mathbb{P}^n$ is generically finite as well.
\end{Proposition}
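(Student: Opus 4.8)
The plan is to pass to the graph of $\phi$ and to the closure of its image, and then deduce everything from the theorem on the dimension of the fibres of a morphism together with the flatness of $X/C$. Let $\Gamma\subseteq X\times_C\mathbb{P}^n_C$ be the closure of the graph of $\phi$, let $q\colon\Gamma\to\mathbb{P}^n_C$ be the second projection, and set $Y:=\overline{q(\Gamma)}\subseteq\mathbb{P}^n_C$, the closure of the image of $\phi$. Since $X$ is integral and dominates $C$, both $\Gamma$ and $Y$ are integral and dominate $C$; in particular the general fibre of $Y\to C$ has dimension $\dim Y-1$, and by upper semicontinuity of fibre dimension every fibre has dimension at least $\dim Y-1$. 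I also record that, by flatness of $X$ over the smooth curve $C$, one has $\dim X_t=\dim X-1$ for \emph{every} $t\in C$, and that for general $t$ the fibre $X_t$ is not contained in the indeterminacy locus of $\phi$, so that $\phi_{|X_t}$ is a genuine rational map.

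First I would bound $d_0$ from above. The subvariety $\overline{\phi_{|X_{t_0}}(X_{t_0})}$ is contained in the fibre of $\mathbb{P}^n_C\to C$ over $t_0$, hence in $Y_{t_0}$, which is a proper closed subvariety of the irreducible variety $Y$ (proper, because $Y$ dominates $C$). Therefore $d_0\le\dim Y_{t_0}\le\dim Y-1$.

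Next I would identify the general value. Over the generic point $\eta$ of $C$, the restriction $\phi_\eta\colon X_\eta\dasharrow\mathbb{P}^n_{K(C)}$ has image whose closure is exactly the generic fibre $Y_\eta$ — formation of the graph and of the image of a rational map is compatible with the flat base change $\Spec K(C)\to C$ — and $\dim Y_\eta=\dim Y-1$. Spreading this computation out, one obtains a dense open $U\subseteq C$ on which $\dim\overline{\phi_{|X_t}(X_t)}=\dim Y-1$; combined with the previous step this gives $\dim\overline{\phi_{|X_t}(X_t)}=\dim Y-1\ge d_0$ for $t\in U$, which is the first assertion. For the second assertion, if $\phi_{|X_{t_0}}$ is generically finite then $d_0=\dim X_{t_0}=\dim X-1=\dim X_t$ for every $t$ by flatness, so for general $t$ we get $\dim\overline{\phi_{|X_t}(X_t)}\ge\dim X_t$; since the reverse inequality is automatic, equality holds and $\phi_{|X_t}$ is generically finite as well.

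The main obstacle is precisely the spreading-out step: one has to make rigorous, uniformly in $t$, the relation between the rational map $\phi$, its restrictions $\phi_{|X_t}$ to the fibres, and the fibrewise images $\overline{\phi_{|X_t}(X_t)}$ — in particular that for general $t$ this fibrewise image really recovers the general fibre $Y_t$ of $Y\to C$, and not merely a proper subvariety of it. This amounts to controlling the indeterminacy locus of $\phi$ together with the exceptional components of the fibres of $\Gamma\to X$ (equivalently, of $\Gamma\to C$) and checking they do not lower the dimension count; using that $\Gamma\to X$ is birational and $X/C$ is flat, these fibres are equidimensional of dimension $\dim X-1$, which is what makes the bookkeeping go through. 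Once this is in place, the remainder is a routine application of Chevalley's semicontinuity of fibre dimension.
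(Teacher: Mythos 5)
Your argument is correct and follows essentially the same route as the paper's: both form the relative image closure $Y=\overline{\phi(X)}\subseteq\mathbb{P}^n_C$, observe that every fibre $Y_t$ has dimension at least that of the general fibre (the paper via flatness of the integral scheme $Y$ over the smooth curve $C$ and constancy of fibre dimension, you via semicontinuity), and conclude by comparing $\overline{\phi_{|X_{t_0}}(X_{t_0})}\subseteq Y_{t_0}$ with $\overline{\phi_{|X_t}(X_t)}=Y_t$ for general $t$. Your final paragraph merely makes explicit, via the graph $\Gamma$, a density claim that the paper asserts without proof, so this is a refinement of the same argument rather than a different one.
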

\begin{proof}
Let us consider the closure $Y = \overline{\phi(X)}\subseteq \mathbb{P}^n_C$ of the image of $X$ through $\phi$. By taking the restriction $\pi_{|Y}:Y\rightarrow C$ of the projection $\pi:\mathbb{P}^n_C\rightarrow C$ we see that $Y$ is a scheme over $C$.

Note that since $Y$ is an irreducible and reduced scheme over the curve $C$ we have that $Y$ is flat over $C$. In particular, the dimension of the fibers $\pi_{|Y}^{-1}(t)=Y_t$ is a constant $d = \dim(Y_t)$ for any $t\in C$.
  
For $t\in C$ general the fiber $\pi_{|Y}^{-1}(t)=Y_t$ contains $\phi_{|X_t}(X_t)$ as a dense subset. Therefore, we have $d = \dim(\phi_{|X_t}(X_t))\leq \dim(X_t)$ for $t\in C$ general.

Then, since $\phi_{|X_{t_0}}(X_{t_0})\subseteq Y_{t_0}$ we have $\dim(\overline{\phi_{|X_{t_0}}(X_{t_0})})\leq d = \dim(\overline{\phi_{|X_t}(X_t)})$ for $t\in C$ general.

Now, assume that $\dim(X_{t_0}) = \dim(\phi_{|X_{t_0}}(X_{t_0}))\leq d$. Therefore, we get 
$$\dim(X_{t_0})\leq d\leq \dim(X_t) = \dim(X_{t_0})$$ 
that yields $d = \dim(X_{t_0}) = \dim(X_t)$ for any $t\in C$. Hence, for a general $t\in C$ we have 
$$\dim(X_t) = \dim(\overline{\phi_{|X_t}(X_t)})$$
that is $\phi_{|X_{t}}:X_{t}\dasharrow \overline{\phi_{|X_t}(X_t)}\subseteq\mathbb{P}^n$ is generically finite.
\end{proof}

Now, let $C$ be a smooth and irreducible curve, $X\subset\mathbb{P}^N$ an irreducible and reduced projective variety, and $f:\Lambda\rightarrow C$ a family of $k$-dimensional linear subspaces of $\mathbb{P}^n$ parametrized by $C$. 

Let us consider the invertible sheaf $\mathcal{O}_{\mathbb{P}^n\times C}(1),$ and the sublinear system $|\mathcal{H}_{\Lambda}|\subseteq |\mathcal{O}_{\mathbb{P}^n\times C}(1)|$ given by the sections of $\mathcal{O}_{\mathbb{P}^n\times C}(1)$ vanishing on $\Lambda\subset\mathbb{P}^n\times C$. We denote by $\pi_{\Lambda|X\times C}$
the restriction of the rational map $\pi_{\Lambda}:\mathbb{P}^n\times C\dasharrow \mathbb{P}^{n-k-1}\times C$ of schemes over $C$ induced by $|\mathcal{H}_{\Lambda}|$.

Furthermore, for any $t\in C$ we denote by $\Lambda_t\cong\mathbb{P}^k$ the fiber $f^{-1}(t)$, and by $\pi_{\Lambda_t|X}$ the restriction to $X$ of the linear projection $\pi_{\Lambda_t}:\mathbb{P}^n\dasharrow\mathbb{P}^{n-k-1}$ with center $\Lambda_t$. 

\begin{Proposition}\label{p2}
Let $d_0 = \dim(\overline{\pi_{\Lambda_{t_0}|X}(X)})$ for $t_0\in C$. Then 
$$\dim(\overline{\pi_{\Lambda_{t}|X}(X)})\geq d_0$$
for $t\in C$ general.

Furthermore, if there exists $t_0\in C$ such that $\pi_{\Lambda_{t_0}|X}:X\dasharrow\mathbb{P}^{n-k-1}$ is generically finite then $\pi_{\Lambda_{t}|X}:X\dasharrow\mathbb{P}^{n-k-1}$ is generically finite for $t\in C$ general.
\end{Proposition}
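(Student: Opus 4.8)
This proposition is essentially a restatement of Proposition \ref{p1} adapted to the specific setup of a family of linear projections from a $C$-family $\Lambda\subset\mathbb{P}^n\times C$ of $k$-planes. The strategy is to package the data $(X\times C, \pi_{\Lambda|X\times C})$ so that it becomes an instance to which Proposition \ref{p1} applies directly. First I would take the constant family $\mathcal{X} := X\times C$ with its projection to $C$; this is flat over $C$ since $C$ is a smooth curve and $\mathcal{X}$ is integral and dominates $C$, and its fiber over every $t\in C$ is $\mathcal{X}_t = X$. The rational map $\pi_{\Lambda|X\times C}:\mathcal{X}\dasharrow\mathbb{P}^{n-k-1}_C$ is a rational map of schemes over $C$, so we are exactly in the hypotheses of Proposition \ref{p1} with $n$ replaced by $n-k-1$.

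The only point requiring genuine care is the identification of the restricted map $(\pi_{\Lambda|X\times C})_{|\mathcal{X}_t}$ with $\pi_{\Lambda_t|X}$. On the fiber over $t$, the linear system $|\mathcal{H}_\Lambda|$ restricts to the sublinear system of $|\mathcal{O}_{\mathbb{P}^n}(1)|$ consisting of hyperplanes vanishing on $\Lambda_t = f^{-1}(t)$, which is precisely the linear system defining the linear projection $\pi_{\Lambda_t}$ with center $\Lambda_t$; restricting further to $X$ gives $\pi_{\Lambda_t|X}$. Hence $\overline{(\pi_{\Lambda|X\times C})_{|\mathcal{X}_t}(\mathcal{X}_t)} = \overline{\pi_{\Lambda_t|X}(X)}$ for all $t$, and in particular $d_0 = \dim(\overline{(\pi_{\Lambda|X\times C})_{|\mathcal{X}_{t_0}}(\mathcal{X}_{t_0})})$. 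Applying the first assertion of Proposition \ref{p1} yields $\dim(\overline{\pi_{\Lambda_t|X}(X)})\geq d_0$ for general $t\in C$, which is the first claim.

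For the second claim, suppose $\pi_{\Lambda_{t_0}|X}:X\dasharrow\mathbb{P}^{n-k-1}$ is generically finite, i.e. $\dim(\overline{\pi_{\Lambda_{t_0}|X}(X)}) = \dim(X) = \dim(\mathcal{X}_{t_0})$. Under the identification above this says exactly that $(\pi_{\Lambda|X\times C})_{|\mathcal{X}_{t_0}}$ is generically finite, so the second assertion of Proposition \ref{p1} gives that $(\pi_{\Lambda|X\times C})_{|\mathcal{X}_t}$, hence $\pi_{\Lambda_t|X}$, is generically finite for general $t\in C$.

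**Expected main obstacle.** There is no real analytic obstacle here; the entire content is bookkeeping, and the one subtlety to get right is the compatibility of base change with the formation of the linear system $|\mathcal{H}_\Lambda|$ — namely that the fiber of the relative linear system over $t$ really is the absolute linear system of hyperplanes through $\Lambda_t$, with no jumping. This holds because $\Lambda$ is flat over $C$ (a family of $k$-planes, so $\mathcal{O}_\Lambda$ is flat and the restriction sequence $0\to\mathcal{I}_\Lambda(1)\to\mathcal{O}_{\mathbb{P}^n\times C}(1)\to\mathcal{O}_\Lambda(1)\to 0$ stays exact after restriction to a general, indeed every, fiber), so $H^0$ of the ideal sheaf commutes with restriction to fibers. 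Once that is observed, the proposition follows immediately from Proposition \ref{p1}.
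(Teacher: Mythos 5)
Your proposal is correct and follows essentially the same route as the paper: one applies Proposition \ref{p1} to $\phi = \pi_{\Lambda|X\times C}$ on the constant family $X\times C$ over $C$, after identifying the restriction of $\pi_{\Lambda|X\times C}$ to the fiber over $t$ with $\pi_{\Lambda_t|X}$. Your extra remark on the compatibility of the relative linear system with restriction to fibers is a sound elaboration of a point the paper leaves implicit.
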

\begin{proof}
The rational map $\pi_{\Lambda|X\times C}:X\times C\dasharrow \mathbb{P}^{n-k-1}\times C$ of schemes over $C$ is just the restriction of the relative linear projection $\pi_{\Lambda}:\mathbb{P}^n\times C\dasharrow \mathbb{P}^{n-k-1}\times C$ with center $\Lambda$. 

Therefore, the restriction of $\pi_{\Lambda|X\times C}$ to the fiber $X_t\cong X$ of $X\times C$ over $t\in C$ induces the linear projection from the linear subspace $\Lambda_t$, that is
$$\pi_{\Lambda|X_t} = \pi_{\Lambda_t|X}$$
for any $t\in C$.  Now, to conclude it is enough to apply Proposition \ref{p1} with $\phi = \pi_{\Lambda|X\times C}$.  
\end{proof}

Essentially, Propositions \ref{p1} and \ref{p2} say that the dimension of the general fiber of the special map is greater or equal than the dimension of the general fiber of the general map. Therefore, when the special map is generically finite the general one is generically finite as well. We would like to stress that in this case, under suitable assumptions, \cite[Lemma 5.4]{AGMO16} says that the degree of the map can only decrease under specialization.

\section{Non secant defectivity via osculating projections}\label{grassnodef}
In this section we use the techniques developed in Section \ref{degtanosc} to study the dimension of secant varieties of Grassmannians. Our first step consists in reinterpreting Proposition \ref{cc} in terms of osculating projections. In order to do this, we need to describe how many tangent spaces we can take in such a way that the flat limit of the span of them is contained in a higher order osculating space.

First, given an irreducible projective variety satisfying Assumption \ref{ass} and having $m$-osculating regularity, we introduce a function $h_m:\N_{\geq 0}\to \N_{\geq 0}$ counting how many tangent spaces we can degenerate to a higher order osculating space. 

\begin{Definition}\label{defhowmanytangent}
Given an integer $m\geq 2$ we define a function
\begin{align*}
h_m:\N_{\geq 0}\to \N_{\geq 0}
\end{align*}
as follows: $h_m(0)=0$. For any $k\geq 1$ write
$$k+1=2^{\lambda_1}+2^{\lambda_2}+\dots+2^{\lambda_l}+\varepsilon$$
where $\lambda_1>\lambda_2>\dots>\lambda_l \geq 1$, $\varepsilon\in \{0,1\}$, and define
$$h_m(k):=m^{\lambda_1-1}+m^{\lambda_2-1}+\dots+m^{\lambda_l-1}.$$
In particular $h_m(2k)=h_m(2k-1)$ and $h_2(k)=\left\lfloor \dfrac{k+1}{2}\right\rfloor$.
\end{Definition}

\begin{Example}
For instance 
$$h_m(1)=h_m(2)=1,h_m(3)=m,h_m(5)=m+1,h_m(7)=m^2,h(9)=m^2+1$$
and since $23=16+4+2+1=2^4+2^2+2^1+1$ we have $h_m(22)=m^3+m+1$.
\end{Example}

\begin{Theorem}\label{lemmadefectsviaosculating}
Let $X\subset \P^N$ be an irreducible projective variety satisfying Assumption \ref{ass} and
having $m-$osculating regularity, $p_1,\dots,p_l\in X$ general points, $k_1,\dots,k_l\geq 1$ integers, and set 
$$h:= \displaystyle\sum_{j=1}^{l}h_m(k_j).$$
If $\Pi_{T^{k_1,\dots,k_l}_{p_1,\dots,p_l}}$ is generically finite then $X$ is not $(h+1)$-defective.
\end{Theorem}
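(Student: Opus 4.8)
The plan is to reduce the statement, via the first part of Proposition \ref{cc}, to the assertion that a general $h$-tangential projection $\tau_{X,h}:X\dasharrow\mathbb{P}^{N_h}$ is generically finite, and then to deduce this from the hypothesis on $\Pi_{T^{k_1,\dots,k_l}_{p_1,\dots,p_l}}$ by degenerating the center of $\tau_{X,h}$ — the span of $h$ general tangent spaces of $X$ — into the osculating span $\langle T^{k_1}_{p_1}X,\dots,T^{k_l}_{p_l}X\rangle$ through a flat family of linear subspaces of $\mathbb{P}^N$, so that Proposition \ref{p2} applies.

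The heart of the argument is the following construction: for a general point $p\in X$ and an integer $k\ge 1$, a configuration of $h_m(k)$ general tangent spaces of $X$ can be degenerated, through a flat one-parameter family of linear subspaces, to a subspace contained in $T^k_pX$. Write $k+1=2^{\lambda_1}+\dots+2^{\lambda_s}+\varepsilon$ as in Definition \ref{defhowmanytangent}. Two elementary moves are available: by Assumption \ref{ass}, two osculating spaces $T^a_pX$ and $T^b_qX$ with $q$ general can be made to collide ($q\to p$) so that the flat limit of $\langle T^a_pX,T^b_{\gamma(t)}X\rangle$ lies in $T^{a+b+1}_pX$; and by $m$-osculating regularity (Definition \ref{moscularity}) the span of $m$ general copies $T^k_{p_1}X,\dots,T^k_{p_m}X$ degenerates into $T^{2k+1}_{p_1}X$. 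Applying the second move $\lambda_i-1$ times one builds each $T^{2^{\lambda_i}-1}_pX$ as a flat limit of $m^{\lambda_i-1}$ tangent spaces (doubling the order $1\mapsto 3\mapsto 7\mapsto\dots\mapsto 2^{\lambda_i}-1$); iterating the first move then combines $T^{2^{\lambda_1}-1}_pX,\dots,T^{2^{\lambda_s}-1}_pX$ into a flat limit contained in $T^{(\sum_i(2^{\lambda_i}-1))+(s-1)}_pX$, which equals $T^k_pX$ when $\varepsilon=0$ and is contained in $T^{k-1}_pX\subseteq T^k_pX$ when $\varepsilon=1$; in either case the number of tangent spaces used is $\sum_i m^{\lambda_i-1}=h_m(k)$. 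Carrying out this degeneration simultaneously for the $l$ groups — using $h_m(k_j)$ tangent spaces in the $j$-th group, and keeping all remaining tangent spaces at general points — one obtains, inside the closure $Z\subseteq\G(D,N)$ of the family of spans $\langle T_{q_1}X,\dots,T_{q_h}X\rangle$ (where $D$ is their dimension for general $q_i$, which is preserved under flat limits), a point $T_0$ contained in $\langle T^{k_1}_{p_1}X,\dots,T^{k_l}_{p_l}X\rangle$.

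To conclude, choose a curve $C\subseteq Z$ joining a general point of $Z$ to $T_0$, and let $\{T_t\}_{t\in C}$ be the restriction to $C$ of the tautological family of $D$-planes. If a linear space $L'$ is contained in a linear space $L$, then $\pi_L$ factors through $\pi_{L'}$, so the general fibers of $\pi_{L'}|_X$ refine those of $\pi_L|_X$; since $\Pi_{T^{k_1,\dots,k_l}_{p_1,\dots,p_l}}$ is generically finite and $T_0$ is contained in its center, the projection of $X$ from $T_0$ is generically finite as well. Proposition \ref{p2}, applied to the family $\{T_t\}_{t\in C}$ with $T_0$ as the special member, then gives that the projection of $X$ from the general member $\langle T_{q_1}X,\dots,T_{q_h}X\rangle$, namely $\tau_{X,h}$, is generically finite, so Proposition \ref{cc} shows that $X$ is not $(h+1)$-defective. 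I expect the technical crux to be the construction of the middle paragraph: organizing the \emph{nested} flat limits produced by the two elementary moves into a genuine degeneration inside $Z$ (equivalently, checking that $T_0$ really lies in $Z$), and verifying that Assumption \ref{ass} and $m$-osculating regularity remain applicable at each stage, where the inputs are only known to be flat limits contained in — rather than equal to — the relevant osculating spaces. One may alternatively avoid the nesting altogether by applying Proposition \ref{p2} once per elementary move and chaining the resulting implications from the osculating end back to the tangential one.
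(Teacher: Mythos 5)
Your proposal is correct and follows essentially the same route as the paper's proof: the same binary decomposition of $k_j+1$, the same two elementary moves (doubling via $m$-osculating regularity, then merging via Assumption \ref{ass}), and the same reduction through Propositions \ref{p2} and \ref{cc}. The "alternative" you mention at the end --- applying Proposition \ref{p2} once per elementary move and chaining the implications from the osculating end back to the tangential one --- is in fact exactly how the paper organizes the argument, thereby sidestepping the nesting issue you flag as the technical crux.
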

\begin{proof}
Let us consider a general tangential projection $\Pi_{T}$ where 
$$T = \left\langle T^1_{p_1^1},\dots, T^1_{p_1^{h_m(k_1)}},\dots, T^1_{p_l^1},\dots,T^1_{p_l^{h_m(k_l)}}\right\rangle$$
and $p_1^1 = p_1,\dots, p_l^1 = p_l$. Our argument consists in specializing the projection $\Pi_{T}$ several times in order to reach a generically finite projection. For seek of notational simplicity along the proof we will assume $l = 1$. For the general case it is enough to apply the same argument $l$ times.

Let us begin with the case $k_1+1 = 2^{\lambda}$. Then $h_{m}(k_1) = m^{\lambda-1}$. Since $X$ has $m$-osculating regularity we can degenerate $\Pi_{T}$, in a family parametrized by a smooth curve, to a projection $\Pi_{U_1}$ whose center $U_1$ is contained in 
$$V_1 = \left\langle T^{3}_{p_1^1}, T^3_{p_1^{m+1}},\dots, T^3_{p_1^{m^{\lambda-1}-m+1}}\right\rangle.$$ 
Again, since $X$ has $m$-osculating regularity we may specialize, in a family parametrized by a smooth curve, the projection $\Pi_{V_1}$ to a projection $\Pi_{U_2}$ whose center $U_2$ is contained in
$$V_2 = \left\langle T^{7}_{p_1^1}, T^7_{p_1^{m^2+1}},\dots, T^7_{p_1^{m^{\lambda-1}-m^2+1}}\right\rangle.$$
Proceeding recursively in this way in last step we get a projection $\Pi_{U_{\lambda-1}}$ whose center $U_{\lambda -1}$ is contained in 
$$V_{\lambda-1} = T^{2^{\lambda}-1}_{p_1^1}.$$
When $k_1+1 = 2^{\lambda}$ our hypothesis means that $\Pi_{T^{k_1}_{p_1^1}}$ is generically finite. Therefore, $\Pi_{U_{\lambda-1}}$ is generically finite, and applying Proposition \ref{p2} recursively to the specializations in between $\Pi_{T}$ and $\Pi_{U_{\lambda-1}}$ we conclude that $\Pi_{T}$ is generically finite as well.

Now, more generally, let us assume that 
$$k_1+1 = 2^{\lambda_1}+\dots + 2^{\lambda_s}+\varepsilon$$
with $\varepsilon\in\{0,1\}$, and $\lambda_1 > \lambda_2 > \dots > \lambda_s\geq 1$. Then
$$h_m(k_1) = m^{\lambda_1-1}+\dots + m^{\lambda_s-1}.$$
By applying $s$ times the argument for $k_1+1 = 2^{\lambda}$ in the first part of the proof we may specialize $\Pi_{T}$ to a projection $\Pi_{U}$ whose center $U$ is contained in 
$$V = \left\langle T^{2^{\lambda_1}-1}_{p_1^1}, T^{2^{\lambda_2}-1}_{p_1^{m^{\lambda_1-1}+1}},\dots, T^{2^{\lambda_s}-1}_{p_1^{m^{\lambda_1-1}+\dots+m^{\lambda_{s-1}-1}+1}}\right\rangle.$$ 
Finally, we use Assumption \ref{ass} $s-1$ times to specialize $\Pi_V$ to a projection $\Pi_{U^{'}}$ whose center $U^{'}$ is contained in 
$$V^{'} = T_{p_1^1}^{2^{\lambda_1}+\dots +2^{\lambda_s}-1}.$$
Note that $T_{p_1^1}^{2^{\lambda_1}+\dots +2^{\lambda_s}-1} = T^{k_1}_{p_1^1}$ if $\varepsilon = 0$, and $T_{p_1^1}^{2^{\lambda_1}+\dots +2^{\lambda_s}-1} = T^{k_1-1}_{p_1^1}\subset T^{k_1}_{p_1^1}$ if $\varepsilon = 1$. In any case, since by hypothesis $\Pi_{T^{k_1}_{p_1^1}}$ is generically finite, again by applying Proposition \ref{p2} recursively to the specializations in between $\Pi_{T}$ and $\Pi_{U^{'}}$ we conclude that $\Pi_{T}$ is generically finite. Therefore, by Proposition \ref{cc} we get that $X$ is not $(\sum_{j=1}^{l}h_m(k_j)+1)$-defective.
\end{proof}

Now, we are ready to prove our main result on non-defectivity of Grassmannians.

\begin{Theorem}\label{maingrass}
Assume that $r\geq 2$, set 
$$\alpha:=\left\lfloor \dfrac{n+1}{r+1} \right\rfloor$$
and let $h_\alpha$ be as in Definition \ref{defhowmanytangent}. If either
\begin{itemize}
	\item[-] $n\geq r^2+3r+1$ and $h\leq\alpha h_{\alpha}(r-1)$ or
	\item[-] $n< r^2+3r+1$, $r$ is even, and 
	$h\leq (\alpha-1) h_{\alpha}(r-1)+
	h_\alpha(n-2-\alpha r)$ 	or
	\item[-] $n< r^2+3r+1$, $r$ is odd, and 
	$h\leq (\alpha-1) h_{\alpha}(r-2)+h_\alpha(\min\{n-3-\alpha(r-1),r-2\})$
\end{itemize}
then $\G(r,n)$ is not $(h+1)$-defective.
\end{Theorem}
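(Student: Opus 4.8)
The plan is to combine Theorem \ref{lemmadefectsviaosculating} with the birationality statements for osculating projections recorded in Corollary \ref{oscprojbirationalII}. The key point is that by Proposition \ref{limitosculatingspacesgrassII} the Grassmannian $\G(r,n)$ has $\alpha$-osculating regularity, and by Proposition \ref{limitosculatingspacesgrass} it satisfies Assumption \ref{ass}; hence Theorem \ref{lemmadefectsviaosculating} applies with $m=\alpha$. Thus, for any choice of general points $p_1,\dots,p_l\in\G(r,n)$ and integers $k_1,\dots,k_l\geq 1$ for which $\Pi_{T^{k_1,\dots,k_l}_{p_1,\dots,p_l}}$ is generically finite, we obtain that $\G(r,n)$ is not $(\sum_{j=1}^l h_\alpha(k_j)+1)$-defective. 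So the whole proof reduces to feeding into this machine the specific osculating projections which Corollary \ref{oscprojbirationalII} guarantees to be birational (hence generically finite), and reading off the resulting bound on $h$.

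I would treat the three cases exactly as they are split in Corollary \ref{oscprojbirationalII}. First, if $n\geq r^2+3r+1$, Corollary \ref{oscprojbirationalII} says $\Pi_{T^{r-1,\dots,r-1}_{e_{I_1},\dots,e_{I_\alpha}}}$ is birational; applying Theorem \ref{lemmadefectsviaosculating} with $l=\alpha$ and $k_1=\dots=k_\alpha=r-1$ gives non-$(h+1)$-defectivity for $h\leq \alpha\, h_\alpha(r-1)$, which is the first bullet. Second, when $2r+1<n<r^2+3r+1$ and $r$ is even: here I would like to use the first item of the second part of Corollary \ref{oscprojbirationalII}, namely that $\Pi_{T^{r-1,\dots,r-1,r'}_{e_{I_1},\dots,e_{I_\alpha}}}$ is birational with $r'=n-2-\alpha r$; applying the Theorem with $k_1=\dots=k_{\alpha-1}=r-1$ and $k_\alpha=r'$ yields the bound $h\leq (\alpha-1)h_\alpha(r-1)+h_\alpha(n-2-\alpha r)$. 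Third, when $n<r^2+3r+1$ and $r$ is odd: I would instead use the second item, that $\Pi_{T^{r-2,\dots,r-2,r''}_{e_{I_1},\dots,e_{I_\alpha}}}$ is birational with $r''=\min\{n-3-\alpha(r-1),r-2\}$, and apply the Theorem with $k_1=\dots=k_{\alpha-1}=r-2$, $k_\alpha=r''$, giving $h\leq(\alpha-1)h_\alpha(r-2)+h_\alpha(\min\{n-3-\alpha(r-1),r-2\})$.

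The one genuinely delicate point is the parity split and the reason the odd case drops from $r-1$ to $r-2$. Since $h_\alpha(2k)=h_\alpha(2k-1)$, when $r$ is even the value $r-1$ is odd and $h_\alpha(r-1)$ is as large as it gets on the pair $\{r-2,r-1\}$, so using osculating order $r-1$ at the first $\alpha-1$ points is optimal and costs nothing; the last point must use the smaller order $r'$ forced by the numerical constraint $l\leq n-r-1-\sum_i s_i$ of Proposition \ref{oscprojbir}, as analyzed in Corollary \ref{oscprojbirationalII}. When $r$ is odd, $r-1$ is even so $h_\alpha(r-1)=h_\alpha(r-2)$ and we lose nothing by lowering the order at all points to $r-2$; this is precisely what buys the extra room needed to still get a meaningful bound at the last point, where $r''\leq r-2$. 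So the "hard part" is not any new computation — it is merely checking that the numerical hypotheses of Proposition \ref{oscprojbir} (equivalently of Corollary \ref{oscprojbirationalII}) are met in each case, which has already been done in the proof of that Corollary, and that the arithmetic identity $h_\alpha(2k)=h_\alpha(2k-1)$ makes the stated bounds the natural output of Theorem \ref{lemmadefectsviaosculating}.

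\begin{proof}
By Proposition \ref{limitosculatingspacesgrass} the Grassmannian $\G(r,n)$ satisfies Assumption \ref{ass}, and by Proposition \ref{limitosculatingspacesgrassII} it has $\alpha$-osculating regularity. Hence Theorem \ref{lemmadefectsviaosculating} applies with $m=\alpha$: for general points $p_1,\dots,p_l\in\G(r,n)$ and integers $k_1,\dots,k_l\geq 1$, if $\Pi_{T^{k_1,\dots,k_l}_{p_1,\dots,p_l}}$ is generically finite then $\G(r,n)$ is not $\bigl(\sum_{j=1}^{l}h_\alpha(k_j)+1\bigr)$-defective. Since birational maps are generically finite, and since the map $\Pi_{T^{k_1,\dots,k_l}_{e_{I_1},\dots,e_{I_l}}}$ for the special points of \eqref{I1Ialpha} has the same general fibre dimension as for general points, it suffices in each case to exhibit suitable $k_j$'s for which Corollary \ref{oscprojbirationalII} gives birationality.

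If $n\geq r^2+3r+1$, Corollary \ref{oscprojbirationalII} gives that $\Pi_{T^{r-1,\dots,r-1}_{e_{I_1},\dots,e_{I_\alpha}}}$ is birational. Taking $l=\alpha$ and $k_1=\dots=k_\alpha=r-1$ in Theorem \ref{lemmadefectsviaosculating} yields that $\G(r,n)$ is not $(h+1)$-defective for $h\leq\alpha\,h_\alpha(r-1)$.

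Assume now $2r+1<n<r^2+3r+1$ and $r$ even. By Corollary \ref{oscprojbirationalII} the map $\Pi_{T^{r-1,\dots,r-1,r'}_{e_{I_1},\dots,e_{I_{\alpha-1}},e_{I_\alpha}}}$, with $r'=n-2-\alpha r$, is birational, and $r-1\geq r'\geq 0$. Taking $l=\alpha$, $k_1=\dots=k_{\alpha-1}=r-1$ and $k_\alpha=r'$ in Theorem \ref{lemmadefectsviaosculating} gives that $\G(r,n)$ is not $(h+1)$-defective for
$$h\leq (\alpha-1)h_\alpha(r-1)+h_\alpha(n-2-\alpha r).$$

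Finally, assume $2r+1<n<r^2+3r+1$ and $r$ odd. By Corollary \ref{oscprojbirationalII} the map $\Pi_{T^{r-2,\dots,r-2,r''}_{e_{I_1},\dots,e_{I_{\alpha-1}},e_{I_\alpha}}}$, with $r''=\min\{n-3-\alpha(r-1),r-2\}$, is birational, and $r''\geq 0$. Taking $l=\alpha$, $k_1=\dots=k_{\alpha-1}=r-2$ and $k_\alpha=r''$ in Theorem \ref{lemmadefectsviaosculating} gives that $\G(r,n)$ is not $(h+1)$-defective for
$$h\leq (\alpha-1)h_\alpha(r-2)+h_\alpha(\min\{n-3-\alpha(r-1),r-2\}).$$
This proves all three cases.
\end{proof}
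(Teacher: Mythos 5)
Your proof is correct and follows exactly the route the paper takes: establish Assumption \ref{ass} and $\alpha$-osculating regularity via Propositions \ref{limitosculatingspacesgrass} and \ref{limitosculatingspacesgrassII}, then feed the birational osculating projections of Corollary \ref{oscprojbirationalII} into Theorem \ref{lemmadefectsviaosculating} with the evident choices of $l$ and $k_j$. The paper compresses this into one sentence; your version merely makes the case-by-case bookkeeping (and the harmless passage from the coordinate points $e_{I_j}$ to general points) explicit.
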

\begin{proof}
Since by Propositions \ref{limitosculatingspacesgrass} and \ref{limitosculatingspacesgrassII}
the Grassmannian $\G(r,n)$ satisfies Assumption \ref{ass} and has $\alpha-$osculating regularity,
it is enough to apply Corollary \ref{oscprojbirationalII} together with Theorem \ref{lemmadefectsviaosculating}.
\end{proof}
Note that if we write 
\begin{equation}\label{222}
r = 2^{\lambda_1}+2^{\lambda_2}+\dots + 2^{\lambda_s}+\varepsilon
\end{equation}
with $\lambda_1 >\lambda_2>\dots >\lambda_s\geq 1$, $\varepsilon\in\{0,1\}$, then 
$$h_{\alpha}(r-1) = \alpha^{\lambda_1-1}+\dots + \alpha^{\lambda_s-1}.$$ 
Therefore, the first bound in Theorem \ref{maingrass} gives 
$$h\leq \alpha^{\lambda_1}+\dots + \alpha^{\lambda_s}.$$
Furthermore, just considering the first summand in the second and third bound in Theorem \ref{maingrass} we get that $\G(r,n)$ is not $(h+1)$-defective for
$$h\leq (\alpha-1)(\alpha^{\lambda_1-1}+\dots + \alpha^{\lambda_s-1}).$$ 
Finally, note that (\ref{222}) yields $\lambda_1 = \lfloor\log_2(r)\rfloor$. Hence, asymptotically we have $h_{\alpha}(r-1)\sim \alpha^{\lfloor\log_2(r)\rfloor-1}$, and by Theorem \ref{maingrass} $\G(r,n)$ is not $(h+1)$-defective for 
$$h\leq \alpha^{\lfloor\log_2(r)\rfloor} = \left(\frac{n+1}{r+1}\right)^{\lfloor\log_2(r)\rfloor}.$$

\begin{Example}


In order to help the reader in getting a concrete idea of the order of growth of the bound in Theorem \ref{maingrass} for $n\geq r^2+3r+1$ we work out some cases in the following table:
\begin{center}
\begin{tabular}{|c|c|l|}
\hline 
$r$ & $r^2+3r+1$ & $h$\\ 
\hline 
$4$ & $29$ & $\left(\frac{n+1}{5}\right)^2+1$\\ 
\hline 
$6$ & $55$ & $\left(\frac{n+1}{7}\right)^2+\left(\frac{n+1}{7}\right)+1$\\ 
\hline 
$8$ & $89$ & $\left(\frac{n+1}{9}\right)^3+1$\\ 
\hline 
$10$ & $131$ & $\left(\frac{n+1}{11}\right)^3+\left(\frac{n+1}{11}\right)+1$\\ 
\hline 
$12$ & $181$ & $\left(\frac{n+1}{13}\right)^3+\left(\frac{n+1}{13}\right)^2+1$\\ 
\hline 
$14$ & $239$ & $\left(\frac{n+1}{15}\right)^3+\left(\frac{n+1}{15}\right)^2+\left(\frac{n+1}{15}\right)+1$\\ 
\hline
$16$ & $305$ & $\left(\frac{n+1}{17}\right)^4+1$\\
\hline 
\end{tabular} 
\end{center}
\end{Example}

Thanks to Theorem \ref{maingrass} it is straightforward to get a linear bound going with $\frac{n}{2}$.

\begin{Corollary}\label{maincor}
Assume that $r\geq 2$, and set 
$$\alpha:=\left\lfloor \dfrac{n+1}{r+1} \right\rfloor$$
If either
\begin{itemize}
	\item[-] $n\geq r^2+3r+1$ and $h\leq\left\lfloor \dfrac{r}{2} \right\rfloor\alpha+1$ or
	\item[-] $n< r^2+3r+1$, $r$ is even, and 
	$h\leq	\left\lfloor \dfrac{n+1}{2} \right\rfloor-\dfrac{r}{2}$ or
	\item[-] $n< r^2+3r+1$, $r$ is odd, and $h\leq \min
	\left\{ \dfrac{r-1}{2} \alpha+1,\left\lfloor \dfrac{n}{2} \right\rfloor-\dfrac{r-1}{2}\right\}$
\end{itemize}
then $\G(r,n)$ is not $h$-defective.
\end{Corollary}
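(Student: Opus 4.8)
\textbf{Proof proposal for Corollary \ref{maincor}.}
The plan is to obtain Corollary \ref{maincor} as a direct specialization of Theorem \ref{maingrass}, the only real work being to replace the combinatorial function $h_\alpha$ by the elementary function $h_2$ and then to simplify the resulting floor expressions. Throughout one uses the standing hypothesis $n\geq 2r+1$, which guarantees $\alpha=\lfloor\frac{n+1}{r+1}\rfloor\geq 2$, and one must remember that Theorem \ref{maingrass} produces non-$(h+1)$-defectivity, so that each bound there is allowed an extra $+1$ when restated as non-$h$-defectivity.

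The key elementary inequality is $h_\alpha(k)\geq h_2(k)=\lfloor\frac{k+1}{2}\rfloor$ for every $k\geq 0$. First I would prove this: for $k=0$ both sides are $0$, and for $k\geq 1$, writing $k+1=2^{\lambda_1}+\dots+2^{\lambda_l}+\varepsilon$ with $\lambda_i\geq 1$ as in Definition \ref{defhowmanytangent}, one has $h_\alpha(k)=\sum_i\alpha^{\lambda_i-1}\geq\sum_i 2^{\lambda_i-1}$ since $\alpha\geq 2$, while $\sum_i 2^{\lambda_i-1}=\lfloor\frac{k+1}{2}\rfloor$ because each $2^{\lambda_i}$ is even and $\varepsilon\in\{0,1\}$.

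Next I would feed this into the three cases of Theorem \ref{maingrass}. For $n\geq r^2+3r+1$ one gets $\alpha h_\alpha(r-1)\geq\alpha\lfloor\frac{r}{2}\rfloor$, hence the first bullet. For $n<r^2+3r+1$ with $r$ even, one uses $h_\alpha(r-1)\geq\frac{r}{2}$ together with $h_\alpha(n-2-\alpha r)\geq\lfloor\frac{n-1-\alpha r}{2}\rfloor$; since $r$ even makes $\alpha r$ even, the latter equals $\lfloor\frac{n-1}{2}\rfloor-\frac{\alpha r}{2}$, so $(\alpha-1)h_\alpha(r-1)+h_\alpha(n-2-\alpha r)\geq\lfloor\frac{n-1}{2}\rfloor-\frac{r}{2}$, and adding $1$ and using $\lfloor\frac{n-1}{2}\rfloor+1=\lfloor\frac{n+1}{2}\rfloor$ gives the second bullet. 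For $r$ odd the computation is the same once one distinguishes which term realizes $\min\{n-3-\alpha(r-1),r-2\}$: if it is $r-2$ one gets $(\alpha-1)\frac{r-1}{2}+\frac{r-1}{2}=\alpha\frac{r-1}{2}$ and the bound $\alpha\frac{r-1}{2}+1$; if it is $n-3-\alpha(r-1)$ one gets, using that $\alpha(r-1)$ is even, the bound $\lfloor\frac{n}{2}\rfloor-\frac{r-1}{2}$. In either case non-$h$-defectivity holds for $h$ up to the one bound that is realized, hence a fortiori for $h$ up to the minimum of the two, which is the third bullet.

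The main obstacle is purely bookkeeping: one must track parities so that $\lfloor a-b\rfloor=\lfloor a\rfloor-b$ for the integers $b$ appearing, and one must make sure the arguments $r-1$, $n-2-\alpha r$, $\min\{n-3-\alpha(r-1),r-2\}$ fed to $h_\alpha$ are nonnegative — which is exactly what was checked in Corollary \ref{oscprojbirationalII} — invoking $h_\alpha(0)=0$ in the extreme case. There is no substantive difficulty beyond this elementary arithmetic.
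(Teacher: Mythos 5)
Your proposal is correct and follows essentially the same route as the paper: the paper's proof likewise reduces to the inequality $h_\alpha(k)\geq h_2(k)=\left\lfloor \frac{k+1}{2}\right\rfloor$ (valid since $\alpha\geq 2$) and then verifies the same two floor identities, $\frac{r}{2}(\alpha-1)+\left\lfloor \frac{n-1-\alpha r}{2}\right\rfloor+1=\left\lfloor \frac{n+1}{2}\right\rfloor-\frac{r}{2}$ for $r$ even and $\frac{r-1}{2}(\alpha-1)+\left\lfloor \frac{n-2-\alpha(r-1)}{2}\right\rfloor+1=\left\lfloor \frac{n}{2}\right\rfloor-\frac{r-1}{2}$ for $r$ odd, before invoking Theorem \ref{maingrass}. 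Your extra bookkeeping (the proof that $h_\alpha\geq h_2$, the parity checks, and the case split on which term realizes the minimum in the odd case) is all consistent with what the paper leaves implicit.
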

\begin{proof}
Since $\alpha \geq 2$ we have $h_\alpha(k)\geq h_2(k)=\left\lfloor \dfrac{k+1}{2} \right\rfloor$. In particular, $h_\alpha(r-1)\geq \left\lfloor \dfrac{r}{2} \right\rfloor$ and $h_\alpha(r-2)\geq \left\lfloor \dfrac{r-1}{2} \right\rfloor$.

Now, it is enough to observe that
\begin{align*}
\dfrac{r}{2} (\alpha-1)+\left\lfloor \dfrac{n-2-\alpha r+1}{2} \right\rfloor+1=
\left\lfloor \dfrac{n+1}{2} \right\rfloor-\dfrac{r}{2}
\end{align*}
for $r$ even, and 
\begin{align*}
\dfrac{r-1}{2} (\alpha-1)+\left\lfloor \dfrac{n-3-\alpha (r-1)+1}{2} \right\rfloor+1=
\left\lfloor \dfrac{n}{2} \right\rfloor-\dfrac{r-1}{2}
\end{align*}
for $r$ odd, and to apply Theorem \ref{maingrass}.
\end{proof}

\subsection{Comparison with Abo-Ottaviani-Peterson bound}\label{uglymath}
Finally, we show that Corollary \ref{maincor} strictly improves \cite[Theorem 3.3]{AOP09b} for $r\geq 4$, whenever $(r,n)\notin\left\{(4,10),(5,11)\right\}$.

For $r\geq 4,n\geq 2r+1$ we define the following functions of $r$ and $n$:
\begin{align*}
a:=\left\lfloor \dfrac{r}{2} \right\rfloor\left\lfloor \dfrac{n+1}{r+1} \right\rfloor,\ 
a':=\left\lfloor \dfrac{n-1}{2} \right\rfloor-\dfrac{r}{2},\ 
a'':=\left\lfloor \dfrac{n}{2} \right\rfloor-\dfrac{r+1}{2},\ 
b:=\left\lfloor \dfrac{n-r}{3} \right\rfloor
\end{align*}
First we show that $a>b$. Indeed, if $r>2$ is even then
$$a=\dfrac{r}{2}\left\lfloor \dfrac{n+1}{r+1} \right\rfloor>\dfrac{r}{2}\cdot\dfrac{n-r}{r+1}>
\dfrac{n-r}{3}\geq\left\lfloor \dfrac{n-r}{3} \right\rfloor=b$$
and if $r>5$ is odd then
$$a=\dfrac{r-1}{2}\left\lfloor \dfrac{n+1}{r+1} \right\rfloor>\dfrac{r-1}{2}\cdot\dfrac{n-r}{r+1}>
\dfrac{n-r}{3}\geq\left\lfloor \dfrac{n-r}{3} \right\rfloor=b.$$
Furthermore, if $r=5$ we write $n=6\lambda+\varepsilon$ with $\varepsilon\in\{-1,0,1,2,3,4\}$. Then we have
$$a=2\left\lfloor \dfrac{6\lambda+\varepsilon+1}{6} \right\rfloor=2\lambda>
2\lambda+\left\lfloor \dfrac{\varepsilon-5}{3} \right\rfloor=\left\lfloor \dfrac{6\lambda+\varepsilon-5}{3} \right\rfloor=b.$$

Now, we assume that $n<r^2+3r+1$ and we show that $a'>b$ if $r$ is even and $(r,n)\neq (4,10)$, and that $a''>b$ if $r$ is odd and $(n,r)\neq (5,11)$. Note that $a'(4,10)=a''(5,11)=b(4,10)=b(5,11)=2$. If $r$ is even
$$a'=\left\lfloor \dfrac{n-1}{2} \right\rfloor-\dfrac{r}{2}>
\dfrac{n-1}{2}-1-\dfrac{r}{2}=\dfrac{n-r-3}{2}>\dfrac{n-r}{3}=b$$
whenever $n>r+9.$ Similarly, if $r$ is odd and $n>r+9$ we have 
$$a''=\left\lfloor \dfrac{n}{2} \right\rfloor-\dfrac{r+1}{2}>
\dfrac{n}{2}-1-\dfrac{r+1}{2}=\dfrac{n-r-3}{2}>\dfrac{n-r}{3}=b.$$
Now, if $r>8$ then $n\geq 2r+1\Rightarrow n>r+9$. A finite number of cases are left, namely
$$(r,n)\in\left\{(r,n);\: r=4,5,6,7,8 \mbox{ and } r^2+3r+1>r+9\geq n \geq 2r+1 \right\}$$
These cases can be easily checked one by one.


\end{document}